\definecolor{refkey}{rgb}{1,0,0.5}
\definecolor{labelkey}{rgb}{0,0.4,1}
\renewcommand{\@todonotes@drawMarginNoteWithLine}{%
	\begin{tikzpicture}[remember picture, overlay, baseline=-0.75ex]%
	\node [coordinate] (inText) {};%
	\end{tikzpicture}%
	\marginnote[{
		\@todonotes@drawMarginNote%
		\@todonotes@drawLineToLeftMargin%
	}]{
		\@todonotes@drawMarginNote%
		\@todonotes@drawLineToRightMargin%
	}%
}
\numberwithin{equation}{section}
\newtheorem{thm}{Theorem}[section]
\newtheorem{lem}[thm]{Lemma}
\newtheorem{prop}[thm]{Proposition}
\newtheorem{rmk}[thm]{Remark}
\newcommand{\be}{\begin{equation}}
\newcommand{\ee}{\end{equation}}
\newcommand{\bee}{\begin{equation*}}
\newcommand{\eee}{\end{equation*}}
\newcommand{\bse}{\begin{subequations}}
\newcommand{\ese}{\end{subequations}}
\newcommand{\bs}{\begin{split}}
\newcommand{\es}{\end{split}}
\newcommand{\ef}{\eqref}
\begin{document}

\author{Hairong Liu$^{1}$}\thanks{$^{1}$School of Science, Nanjing Forestry University, Nanjing 210037, P.R.China.
E-mail : hrliu@njfu.edu.cn}

\author{Tao Luo$^{2}$}\thanks{$^{2}$Department of Mathematics, City University of Hong Kong, 83 Tat Chee Avenue, Kowloon Tong, Hong Kong.
E-mail: taoluo@cityu.edu.hk}

\author{Hua Zhong$^{3}$}\thanks{$^{3}$School of Mathematics, Southwest Jiaotong University, Chengdu 611756, P.R. China.
E-mail: huazhong@swjtu.edu.cn}

\title[] {Global Solutions to an initial boundary problem  for the compressible 3-D MHD equations with Navier-slip and perfectly conducting boundary
conditions in exterior domains}

\begin{abstract}

 An initial boundary value problem for compressible  Magnetohydrodynamics (MHD) is considered on an exterior domain (with the first Betti number vanishes) in $\mathbb{R}^3$ in this paper. The global existence  of smooth solutions near a given constant  state for compressible MHD with the boundary conditions of Navier-slip for the velocity filed  and perfect conduction  for the magnetic field is established.  Moreover the explicit decay rate is given. In particular, the results obtained in this paper also imply the global existence of classical solutions for the full compressible Navier-Stokes equations with Navier-slip boundary conditions on exterior domains in three dimensions, which is not available in literature, to the best of knowledge of the authors'.

\noindent {\bf Keywords}: Exterior domain,  Navier-slip boundary
conditions,  Perfectly conducting condition, Global regularity near boundaries, Compressible full  MHD.\\
{\bf AMS Subject Classifications.}  76W05, 76N10
\end{abstract}

\maketitle

\section{Introduction and Main Theorems}

Magnetohydrodynamics (MHD) mainly investigates the dynamics of compressible quasineutrally ionized fluids under the influence of electromagnetic fields. It is well-known that the applications of Magnetohydrodynamics cover a very wide range of physical objects, from liquid metals to cosmic plasmas. In this paper,
the 3-dimensional  compressible full magnetohydrodynamic equations will be considered, which take the following form (\cite{Laudau,HW2008}):
\begin{eqnarray}{\label{prob2}}
\left\{
\begin{array}{llll}
\rho_{t}+\mbox{div}(\rho u)=0,\\[2mm]
\rho\big(u_{t}+u\cdot\nabla u\big)-\mu\Delta u-(\mu+\lambda)\nabla\mbox{div}u+\nabla p=\mbox{curl}H\times H,\\[2mm]
\partial_{t}(\rho e)+\mbox{div}(\rho e u)+p\mbox{div}u=\Psi:\nabla u+\mbox{div}(\kappa\nabla\mathcal{T})+\eta|\mbox{curl}H|^2,\\[2mm]
H_{t}- \eta \Delta H=\mbox{curl}(u\times H),\quad \quad \mbox{div}H=0,
\end{array}
\right.
\end{eqnarray}
where $\rho$, $u=(u_1,u_2,u_3)$, $H=(H_1,H_2,H_3)$, $e$, $\mathcal{T}$ denote the density, the velocity, the magnetic field, the internal energy  and temperature, respectively.
The viscosity coefficients $\mu$ and $\lambda$ of the fluid should satisfy
$$2\mu+3\lambda>0,\quad \mu>0,$$
due to physical realities. $\kappa>0$ is the heat conductivity, and $\eta>0$ is the magnetic diffusivity which describe a magnetic diffusion coefficient of the magnetic field. $\Psi$ is the viscous stress tensor, given by
\begin{equation*}
\Psi=\mu(\nabla u+\nabla u^{T})+\lambda \mbox{div}uI.
\end{equation*}
and  $\Psi:\nabla u$ denotes the scalar product of two matrices. A calculation gives  that
\begin{equation*}
\Psi:\nabla u=\lambda(\mbox{div}u)^2+\sum_{i,j=1}^{3}\frac{\mu}{2}\left(\frac{\partial u^{i}}{\partial x_{j}}+\frac{\partial u^{j}}{\partial x_{i}}\right)^2=\lambda(\mbox{div}u)^2+2\mu |S(u)|^2.
\end{equation*}
For simplicity, we study the case of  polytropic ideal gas type with the equations of state:
\begin{equation*}
	e=c_{v}\mathcal{T}, \quad p=R\rho\mathcal{T},
\end{equation*}
where $c_v$ and $R$ are positive constants. In this case, we may write
 $(\ref{prob2})_3$ as
\begin{equation*}
c_{v}\rho(\mathcal{T}_{t}+ u\cdot\nabla\mathcal{T})+p\mbox{div}u=\kappa\Delta\mathcal{T}+\Psi:\nabla u+\eta|\mbox{curl}H|^2.
\end{equation*}

Let $U$ be a simply connected bounded smooth domain in $\mathbb{R}^{3}$, and $\Omega\equiv\mathbb{R}^{3}\backslash \bar{U}$ be the exterior domain. In this paper, we study the initial-boundary value problem of (\ref{prob2}) in $\Omega$ with the initial condition
 \begin{equation}\label{initial}
 (\rho, u, \mathcal{T}, H)|_{t=0}=(\rho_0, u_0,  \mathcal{T}_0, H_0),
 \end{equation}
 and the boundary conditions:
 \begin{equation}\label{boundary2}
 	u\cdot n=0,\quad \mbox{curl}u\times n=0,\quad \mbox{on}\quad x\in\partial\Omega,
 \end{equation}
 \begin{equation}\label{boundary3}
\quad H\cdot n=0,\quad \mbox{curl}H\times n=0,\quad \frac{\partial\mathcal{T}}{\partial{n}}=0, \quad \mbox{on}\quad x\in\partial\Omega.
\end{equation}
Moreover, besides the  the compatibility condition of the initial data with the boundary condition, we also assume that
the initial data satisfy the compatibility conditions,
\begin{equation*}
	(\rho_0, u_0,  \mathcal{T}_0, H_0)(x)\rightarrow (1,0,1,0),\quad\quad\text{as}\quad x\rightarrow\infty.
\end{equation*}
The boundary condition \ef{boundary2} means the Navier-slip boundary conditions for the velocity field, \ef{boundary2} is  the perfectly conducting boundary condition for  the magnetic field, and Neumann boundary condition for  temperature.

The Navier-slip boundary conditions for the velocity field were first introduced by Navier in \cite{Navierslip} and have been used in  many applications, which  are usually used in the large eddy simulations of turbulent flows,  to compute the large eddies of a turbulent flow accurately by neglecting small flow structure. For this, the slip boundary conditions are more suitable than the Dirichlet boundary conditions \cite{GL2000}.

The system of  compressible  MHD equations have been studied extensively by physicists and mathematicians because of its physical importance, rich and complex phenomena and mathematical challenges. We mainly review the results on the global existence of solutions related to the main theme of this paper, for which one may refer to \cite{chenwang2002,wang2003,Kawashima1982} for one-dimensional case and \cite{Ducomet2006, Kawashima1982, HW2008, LXZ2013,PuG2013,HW2011,LY2011,ZZ2008} for higher dimensions for example. The results for the global existence and large time behavior of solutions in 3-D for  the Cauchy problem , can be found, for instance, the  classical solutions in \cite{Kawashima1982,LXZ2013,PuG2013, LY2011,ZZ2008},  the variational weak solutions  in \cite{HW2008,HW2011,Ducomet2006}.

In the presence of physical boundaries,  there have been extensive studies on the global existence and large time behavior of solutions to the initial boundary value problems for compressible MHD equations in 3-D  for the homogenous Dirichlet condition for both velocity filed $u$ and the magnetic field $H$, i.e., $u=0$  and $H=0$ on the boundary. It is more physically relevant to consider the case that the magnetic field is non-zero on the boundary. In this direction, the global existence of classical solutions for 3-D compressible isentropic MHD was proved in \cite{CHS2021-1} very recently in a bounded domain (see also \cite{DJJ2013} for the related results in 2-D and \cite{CL2021} for the related results for compressible isentropic Navier-Stokes equations in 3-D).  For the compressible Navier-Stokes equations in 3-D with Navier-slip boundary conditions,  there are only few results on the global solutions for the initial boundary value problem for the Navier-slip boundary conditions in 3-D, though results are available for some small physical parameter limits for local time, for instance, see  the related zero viscosity limit \cite{XX2007, XXW2009, 2012CMP,Masmoudi2012,  Wang2016} and \cite{COR2015,GLX2019} for low Mach number limit.

Compared with the problem for 3-D compressible isentropic MHD equations studied  in \cite{CHS2021-1}  in a bounded domain, the difficulty in proving the global existence of classical solution for the initial boundary value problem of
(\ref{prob2})-(\ref{boundary3}) in an  exterior domain in $\mathbb{R}^3$ is due to the unboundedness of the exterior domain, for which  the Poincare type inequality to use the $L^2$-norm of the derivatives to control the $L^2$-norm of the solution itself is not available. For example, in a bounded domain,  the dissipation estimates (the $L^2(\Omega\times [0, \infty)$ space-time estimates)  of the velocity and magnetic fields can be obtained via the corresponding estimates of the derivatives which follow from the dissipation of the viscosity and magnetic diffusion in  the basic energy estimates. This is an important decay mechanism for the problem on a bounded domain for which various exponential decay  estimates are obtained in \cite{CHS2021-1}. However, for the problem on an exterior domain studied in the present  paper, only algebraic decay can be expected  in general. Another difference from the case studied in \cite{CHS2021-1} is that we can handle the case of the full compressible MHD system with variable entropy,  while only isentropic case is studied in  \cite{CHS2021-1} for which possibly large oscillations and vacuum states (with small
energy) are allowed. It should be noted that,  for non-isentropic flows,  even in the case without magnetic fields ($H=0$),  there have been no global-in-time theory available for the problems with physical boundaries under  the Navier-slip boundary conditions. 

For  the boundary conditions of Navier-slip for the velocity field and the perfect conduction of magnetic filed, it is more suitable to use the $L^2$-norms of $div$ and $curl $ to control that of the derivatives of the velocity and magnetic fields.  This control usually depends on the topology of the domain, which relates to the first Betti number. A key observation is that first Betti number of  the exterior domain $\Omega=\mathbb{R}^3\backslash \bar U$ vanishes when $U$ is a bounded simply connected open set.  In this case,   one can apply the refined div-curl estimate (see Proposition \ref{prop-imp}) to  obtain  the dissipation estimate of $\|\nabla u\|$ with the help of the   dissipation estimates for $\|\mbox{curl}u\|$ and $\|\mbox{div}u\|$. This observation is important to obtain the various dissipation estimates.

The basic approach to obtain the global existence and decay estimates of classical  solutions of the initial boundary value problem is based on the energy estimates, for which the boundary conditions (\ref{boundary2})-(\ref{boundary3}) make the problem rather interesting and challenging. In fact, unlike the homogeneous Dirichlet boundary conditions $u=H=0$ on the boundary, various estimates on $div u$, $curl u$ and $curl H$ are crucial for the problem investigated in this paper.

The main results of this paper are stated as follows.
\begin{thm}\label{thm-1}  Suppose that the initial data satisfy the compatibility condition with the boundary conditions.  There exists a constant $\delta_1>0$ such that if
\begin{equation*}
\|\rho_0-1\|_2+\|(u_0, \mathcal{T}_0-1,H_0)\|_{3}\leq \delta_1,
\end{equation*}
then the initial boundary value problem (\ref{prob2})-(\ref{boundary3}) admits a  unique strong solution
$(\rho,u,\mathcal{T},H)$ globally in time  satisfying
\begin{equation*}
\rho-1\in C^{0}(0,\infty,H^{2}(\Omega))\cap C^{1}(0,\infty,H^{1}(\Omega)),
\end{equation*}
\begin{equation*}
(u, \mathcal{T}-1, H)\in C^{0}(0,\infty,H^{3}(\Omega))\cap C^{1}(0,\infty,H^{1}(\Omega)),
\end{equation*}
and
\begin{equation*}
\begin{split}
&\Big(\|(u,\mathcal{T}-1,H)\|^2_{3}+\|\rho-1\|^2_{2}+\|(\rho_t,u_t, H_t,\mathcal{T}_t)\|^2_{1}\Big)(t)\\[2mm]
&+C\int_{0}^t\Big(
\|(\nabla u,\nabla\mathcal{T},\nabla H)\|^2_{2}+\|\nabla \rho\|^2_{1}+\|(u_{t},\mathcal{T}_{t},H_{t})\|^2_{2}+\|\rho_{t}\|^2_{1}\Big)ds \\[2mm]
&\leq \Big(\|(u,\mathcal{T}-1,H)\|^2_{3}+\|\rho-1\|^2_{2}+\|(\rho_t,u_t, H_t,\mathcal{T}_t)\|^2_{1}\Big) (0).
\end{split}
\end{equation*}
	where $C>0$ is a positive constant independent of $t$.
\end{thm}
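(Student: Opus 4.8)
The plan is to combine a local existence result with uniform-in-time a priori estimates and a standard continuation argument. Writing the perturbation in terms of $\rho-1$, $u$, $\mathcal{T}-1$, $H$, I introduce the energy and dissipation functionals
\begin{equation*}
\mathcal{E}(t)=\|(u,\mathcal{T}-1,H)\|_3^2+\|\rho-1\|_2^2+\|(\rho_t,u_t,H_t,\mathcal{T}_t)\|_1^2,
\end{equation*}
\begin{equation*}
\mathcal{D}(t)=\|(\nabla u,\nabla\mathcal{T},\nabla H)\|_2^2+\|\nabla\rho\|_1^2+\|(u_t,\mathcal{T}_t,H_t)\|_2^2+\|\rho_t\|_1^2 .
\end{equation*}
The target inequality is exactly $\mathcal{E}(t)+C\int_0^t\mathcal{D}(s)\,ds\le\mathcal{E}(0)$, so it suffices to establish the differential inequality $\frac{d}{dt}\mathcal{E}(t)+C\mathcal{D}(t)\le 0$ on the maximal interval of existence, provided $\mathcal{E}$ stays below a threshold fixed by $\delta_1$. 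First I would invoke local well-posedness to obtain a solution on $[0,T_\ast)$ in the stated regularity class; the global statement then follows once the a priori bound shows that $\mathcal{E}$ cannot escape the smallness ball, ruling out finite-time breakdown.

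The heart is the a priori estimate, and I would build it hierarchically. At the lowest order, testing the momentum equation with $u$ and the magnetic equation with $H$ produces the dissipation of $\nabla u$ and $\nabla H$ through the viscous and magnetic-diffusion terms, but only after the boundary integrals are disposed of. Here the Navier-slip conditions $u\cdot n=0$, $\mathrm{curl}\,u\times n=0$ and the perfectly conducting conditions $H\cdot n=0$, $\mathrm{curl}\,H\times n=0$ are precisely what make the boundary terms vanish or reduce to curvature contributions controlled by lower-order norms; integrating by parts one naturally obtains control of $\|\mathrm{div}\,u\|$, $\|\mathrm{curl}\,u\|$, $\|\mathrm{div}\,H\|$, $\|\mathrm{curl}\,H\|$ rather than of the full gradient. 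The crucial device for the exterior domain is then Proposition \ref{prop-imp}: since the first Betti number of $\Omega$ vanishes, the refined div-curl estimate upgrades these divergence and curl norms (together with the normal trace) into full control of $\|\nabla u\|$ and $\|\nabla H\|$, compensating for the absence of a Poincar\'e inequality. The temperature equation, tested against $\mathcal{T}-1$, yields dissipation of $\nabla\mathcal{T}$ using the Neumann condition $\partial\mathcal{T}/\partial n=0$.

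The delicate dissipation is that of $\nabla\rho$, for which the continuity equation supplies nothing on its own. I would recover it from the momentum equation by testing with $\nabla\rho$ (equivalently by exploiting the pressure gradient $\nabla p=R\nabla(\rho\mathcal{T})$), which produces $\|\nabla\rho\|^2$ with a favorable sign at the cost of terms such as $\int\rho u_t\cdot\nabla\rho$; these are absorbed using the already-controlled $\|u_t\|$ together with smallness. The higher-order estimates proceed by applying tangential and time derivatives up to the orders dictated by the $H^3$/$H^2$ regularity and repeating the energy scheme, again combining div-curl control with the boundary conditions; the time-derivative norms in $\mathcal{E}$ and $\mathcal{D}$ are handled by using the equations themselves to trade time derivatives for spatial ones. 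Throughout, the nonlinear terms---convection, the Lorentz force $\mathrm{curl}\,H\times H$, the heat-production terms $\Psi:\nabla u$ and $\eta|\mathrm{curl}\,H|^2$, and the variable coefficients coming from $\rho$ and $\mathcal{T}$---are estimated by Sobolev embedding and interpolation, so that each carries a prefactor $\sqrt{\mathcal{E}}$ and can be absorbed into $C\mathcal{D}$ once $\mathcal{E}$ is small.

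I expect the main obstacle to lie in the higher-order estimates near the boundary. Because the Navier-slip and perfectly conducting conditions do not commute with arbitrary spatial derivatives, one cannot simply differentiate the system and reuse the boundary cancellations; instead the full gradient norms must be reconstructed at each order through the div-curl-normal-trace mechanism of Proposition \ref{prop-imp}, while carefully tracking the boundary integrals generated by the second fundamental form of $\partial\Omega$. Keeping these geometric boundary terms, the pressure-density coupling, and the full thermal coupling mutually consistent within a single closed inequality---without any assistance from a Poincar\'e inequality on the unbounded domain---is where the bulk of the work will be.
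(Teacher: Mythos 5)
Your outline is correct and rests on the same two pillars as the paper's proof: reduce to a closed a priori estimate $\mathcal{E}_1^2(t)+C\int_0^t\mathcal{D}_1^2\,ds\le C\mathcal{E}_1^2(0)$ under a smallness hypothesis, and use the refined div--curl estimate of Proposition \ref{prop-imp} (valid because the first Betti number of $\Omega$ vanishes) to convert the $\mathrm{div}$/$\mathrm{curl}$ dissipation produced by the slip and perfectly conducting conditions into full gradient dissipation in the absence of a Poincar\'e inequality; your recovery of $\|\nabla\rho\|$ from the momentum equation is also the same mechanism the paper uses, in the slightly different but classical form of pairing $(\ref{prob-linear})_2$ against $\nabla\mathrm{div}\,u$ with $\nabla(\ref{prob-linear})_1$ against $R\nabla q$ so that the cross terms cancel. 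Where you diverge is at the higher orders: you propose tangential and time differentiation and anticipate curvature/second-fundamental-form boundary contributions as the main obstacle, whereas the paper never applies tangential derivatives at all. Instead it exploits the fact that the chosen differential operators are compatible with the boundary conditions: time derivatives preserve \eqref{boundary-linear} exactly; the vorticities $w=\mathrm{curl}\,u$ and $\phi=\mathrm{curl}\,H$ satisfy $w\times n=\phi\times n=0$, so the vorticity equations can be tested against $w$, $\phi$, $\mathrm{curl}^2w$, $\mathrm{curl}^2\phi$ with all boundary integrals vanishing identically (Lemma \ref{curl^2u} and its sequels); and identities such as $\int_\Omega\mathrm{curl}^2u\cdot\nabla\mathrm{div}\,u\,dx=0$ decouple the solenoidal and gradient parts. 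Full second and third derivatives of $u$, $\theta$, $H$ and the $\|\nabla q\|_1$ dissipation are then reconstructed \emph{a posteriori} from $\mathrm{curl}^k$ and $\nabla\mathrm{div}$ control via the Lam\'e, Stokes and Neumann elliptic estimates (Lemmas \ref{lem-elliptic-1}--\ref{lem-elliptic-3}), with a small-$\varepsilon$ absorption to close the $\nabla^2q$ estimate. So the geometric boundary terms you flag as the bulk of the work simply do not arise on the paper's route; if you pursue tangential differentiation instead, you would have to control them by hand, which is harder than necessary. Your sketch would benefit from making the vorticity-equation step explicit, since without it the dissipation of $\|\nabla^2u\|$ and $\|\nabla^2H\|$ is not obtainable from the basic energy identities alone.
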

\begin{rmk}
 The compatibility condition of the initial data with the boundary conditions is standard to ensure the regularity of the solutions.
The data
\begin{equation*}
\|(\rho_t,u_t, H_t,\mathcal{T}_t)\|^2_{1}(0)
\end{equation*}
is determined  by the the initial data $(\rho_0,u_0,\mathcal{T}_0,H_0)$ through the equations (\ref{prob2}).
\end{rmk}

For the initial data  close to the constant state $(1, 0,1,0)$ in higher-order Sobolev norm, we can improve the regularity of solutions  in Theorem \ref{thm-1}. Precisely,
\begin{thm}\label{thm-2} { Suppose that the initial data satisfy the compatibility condition with the boundary conditions. }There exists a constant $\delta>0$ such that if
\begin{equation*}
\|\rho_0-1\|_3+\|(u_0, \mathcal{T}_0-1,H_0)\|_{4}\leq \delta,
\end{equation*}
then the initial boundary value problem (\ref{prob2})- (\ref{boundary3}) admits a unique  smooth solution
$(\rho,u,\mathcal{T},H)$ globally in time  satisfying
\begin{equation*}
\rho-1\in C^{0}(0,\infty, H^{3}(\Omega))\cap C^{1}(0,\infty, H^{2}(\Omega)),
\end{equation*}
\begin{equation*}
(u, \mathcal{T}-1, H)\in C^{0}(0,\infty,H^{4}(\Omega))\cap C^{1}(0,\infty,H^{2}(\Omega)),
\end{equation*}
and
\begin{equation*}
\begin{split}
&\Big(\|\rho-1\|^2_3+\|(u,\mathcal{T}-1, H)\|^2_{4}+\|(\rho_t,u_t,\mathcal{T}_t,H_t)\|^2_{2}+\|(\rho_{tt},u_{tt},\mathcal{T}_{tt},H_{tt})\|^2\Big)(t)\\[2mm]
&+C\int_{0}^t\Big(
\|(\nabla u,\nabla\mathcal{T},\nabla H)\|^2_{3}+\|\nabla \rho\|^2_{2}+\|(\rho_t,u_{t},H_{t})\|^2_{2}+\|\mathcal{T}_{t}\|^2_{3}
+\|(u_{tt},\mathcal{T}_{tt},H_{tt})\|^2_{1}+\|\rho_{tt}\|^2\Big)ds \\[2mm]
&\leq \|\rho-1\|^2_3+\|(u,\mathcal{T}-1, H)\|^2_{4}+\|(\rho_t,u_t,\mathcal{T}_t,H_t)\|^2_{2}+\|(\rho_{tt},u_{tt},\mathcal{T}_{tt},H_{tt})\|^2\Big)(0),
\end{split}
\end{equation*}
	where $C>0$ is a positive constant independent of $t$.
\end{thm}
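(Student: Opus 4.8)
The plan is to establish Theorem \ref{thm-2} by the energy method, combining a local existence theorem at the $H^4$ regularity level with uniform-in-time a priori estimates and a continuation (bootstrap) argument. Since the smallness hypothesis $\|\rho_0-1\|_3+\|(u_0,\mathcal{T}_0-1,H_0)\|_4\le\delta$ is stronger than that of Theorem \ref{thm-1}, the latter already supplies a unique global solution together with the uniform bound and integrated dissipation at the lower order; the real task is therefore to propagate the extra spatial derivative (order $4$ for $(u,\mathcal{T},H)$, order $3$ for $\rho$) and the second time derivative globally in time. Writing $\mathcal{E}(t)$ for the full energy on the left and $\mathcal{D}(t)$ for the dissipation integrand appearing in the statement, the goal is the closed differential inequality $\frac{d}{dt}\mathcal{E}(t)+\mathcal{D}(t)\lesssim \sqrt{\mathcal{E}(t)}\,\mathcal{D}(t)$; once $\mathcal{E}$ is known to be small this absorbs the right-hand side and yields $\mathcal{E}(t)+C\int_0^t\mathcal{D}\,ds\le \mathcal{E}(0)$.

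First I would set up the bootstrap: on a maximal interval $[0,T]$ assume $\sup_{[0,T]}\mathcal{E}(t)\le\varepsilon_0$ with $\varepsilon_0$ small, so that all $L^\infty$ norms of $(\rho-1,u,\mathcal{T}-1,H)$ and their low derivatives are controlled by Sobolev embedding and $\rho$ stays bounded away from vacuum. The estimates are then built in a hierarchy. (i) The basic estimate comes from the physical energy (kinetic $+$ magnetic $+$ internal $+$ the density potential whose quadratic part is $\sim\|\rho-1\|^2$), which is monotone and furnishes control of the lowest-order quantities together with the dissipation $\mu\|\nabla u\|^2+(\mu+\lambda)\|\mathrm{div}\,u\|^2+\eta\|\mathrm{curl}\,H\|^2+\kappa\|\nabla\mathcal{T}\|^2$; here the Navier-slip condition \ef{boundary2} and the perfectly conducting condition \ef{boundary3} are exactly what make the boundary integrals vanish after integration by parts. (ii) Spatial derivative estimates: apply $\nabla^k$ to \ef{prob2}, pair with the corresponding derivative of the unknown, and integrate by parts; the viscous, magnetic and heat terms produce the gradient dissipation, while the nonlinear commutators are cubic and absorbed by $\sqrt{\mathcal{E}}\,\mathcal{D}$. (iii) The dissipation of the density derivatives is recovered by the standard device of pairing the momentum equation with $\nabla\rho$ and using the continuity equation, which converts the pressure gradient into a coercive $\|\nabla\rho\|^2$ term modulo controllable remainders. (iv) Time-derivative estimates for $(\rho_t,u_t,\mathcal{T}_t,H_t)$ and for the second time derivatives follow by differentiating the system in $t$ and running the parabolic energy estimates; since the boundary conditions are time independent they are preserved under $\partial_t$, so these steps incur no new boundary difficulty. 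Finally the div-curl estimate of Proposition \ref{prop-imp}, valid because the first Betti number of $\Omega$ vanishes, upgrades the controlled norms of $\mathrm{div}\,u$, $\mathrm{curl}\,u$, $\mathrm{curl}\,H$ (and their derivatives) into full gradient bounds $\|\nabla u\|$, $\|\nabla H\|$, closing the loop between $\mathcal{E}$ and $\mathcal{D}$.

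Summing the hierarchy with suitably small coupling constants produces the differential inequality above; invoking the smallness of $\mathcal{E}(0)$—which follows from the hypothesis on the data, the initial values of the time derivatives being determined through the equations as in the remark after Theorem \ref{thm-1}—closes the bootstrap, so $T=\infty$ and the stated energy inequality holds. The claimed regularity $\rho-1\in C^{0}(H^{3})\cap C^{1}(H^{2})$ and $(u,\mathcal{T}-1,H)\in C^{0}(H^{4})\cap C^{1}(H^{2})$ then follows from the uniform bounds together with the equations, and uniqueness is inherited from the lower-order theory.

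I expect the main obstacle to be the top-order boundary terms. Tangential differentiation preserves \ef{boundary2}--\ef{boundary3}, but a normal derivative does not, so the boundary integrals arising at order $4$ for $(u,H)$ cannot be discarded naively. The resolution is to keep the energy organized through $\mathrm{div}$ and $\mathrm{curl}$ rather than individual components—so that the only boundary contributions carry the factors $u\cdot n$, $\mathrm{curl}\,u\times n$, $H\cdot n$, $\mathrm{curl}\,H\times n$ that vanish—and to trade normal derivatives for tangential ones plus lower-order terms via the equations (the elliptic structure of the viscous and heat operators and the transport structure of the density). A secondary difficulty is the absence of a Poincar\'e inequality on the unbounded $\Omega$: the genuinely zeroth-order quantities $\|\rho-1\|$ and $\|\mathcal{T}-1\|$ have no associated dissipation, so their control must come solely from the monotone physical energy and from the coupling in the continuity and temperature equations, which is precisely why only a uniform bound (and, elsewhere, algebraic rather than exponential decay) can be expected in the exterior-domain setting.
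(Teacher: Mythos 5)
Your overall strategy is the same as the paper's: Theorem \ref{thm-2} is reduced to an $H^4$-level a priori estimate (Proposition \ref{prop-2}) layered on top of the $H^3$ theory behind Theorem \ref{thm-1}, with the energy organized through $\mbox{div}$ and $\mbox{curl}$ so that the Navier-slip and perfectly conducting conditions annihilate the boundary integrals, elliptic regularity (Lemmas \ref{lem-elliptic-1}--\ref{lem-elliptic-3}) and Proposition \ref{prop-imp} converting div/curl control into full gradient control, twice-differentiated-in-time estimates for $(\rho_{tt},u_{tt},\mathcal{T}_{tt},H_{tt})$ as in Lemma \ref{lem-uxtt}, and smallness absorbing the cubic remainders. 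Up to that level of description there is no disagreement.

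The one step that would fail as you have written it is item (iii), the recovery of the top-order density dissipation $\int_0^t\|\nabla^3 q\|^2\,ds$ (the $\|\nabla\rho\|_2^2$ term in the statement). Pairing $\nabla^2$ of the momentum equation with $\nabla^3 q$ does make the pressure term coercive, but the viscous term then costs $\varepsilon\|\nabla^3 q\|^2+C_\varepsilon\|\nabla^4 u\|^2$ with a non-small constant in front of $\|\nabla^4 u\|^2$, and in an exterior domain with slip conditions the only access to $\int\|\nabla^4 u\|^2$ is through elliptic regularity for the Lam\'e or Stokes operator, whose forcing contains $R\nabla q$ and hence returns $\|\nabla^3 q\|$ with an $O(1)$ constant: the ``remainders'' are not controllable and the argument is circular. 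The paper breaks this circle by a specific ordering that your plan needs to adopt: (a) the vorticity $w=\mbox{curl}\,u$ obeys its own parabolic equation (\ref{curl-equation-u}) with $w\times n|_{\partial\Omega}=0$, which yields $\int\|\nabla^2\mbox{curl}^2u\|^2$ \emph{independently} of $\nabla^3 q$ (Lemma \ref{lem4-3-1}); (b) one then pairs $\nabla^2(\ref{prob-linear})_2$ with $\nabla^2\nabla\mbox{div}\,u$ rather than with $\nabla^3 q$, so the viscous cost is the already-controlled $\|\nabla^2\mbox{curl}^2u\|^2$, the pressure cross term is cancelled by $\nabla^3$ of the continuity equation, and the only uncontrolled error is $\varepsilon\int\|\nabla^3 q\|^2$ coming from the $\nabla^3\theta_t$ coupling (Lemma \ref{lem-q3}); (c) the Stokes estimate of Lemma \ref{lem-elliptic-2} then converts $\int\|\mbox{div}\,u\|_3^2$ into $\int\big(\|\nabla^4 u\|^2+\|\nabla^3 q\|^2\big)$, and $\varepsilon$ is chosen small relative to the constant $C_*$ of that estimate so that the error absorbs. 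Without (a)--(c), or an equivalent device, the top-order hierarchy does not close; the remainder of your outline (basic energy, time derivatives, div-curl upgrade, continuation) is sound and matches the paper.
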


In addition, we shall show that the solution in Theorem \ref{thm-2} approaches the stationary state as $t\rightarrow \infty$, and give the explicit decay rate. More precisely,
\begin{thm}\label{thm-3}
Let $(\rho,u,\mathcal{T}, H)$ be the global solution in Theorem  \ref{thm-2}, and $(1,u_s,1,H_s)$ with $u_s=H_s=(0,0,0)$  be a stationary state. Then it holds:
\begin{equation*}
\begin{split}
\|(\rho_t, u_t, \mathcal{T}_t, H_t)(t)\|=O(t^{-1/2})\quad \mbox{as}\quad t\rightarrow \infty\\[2mm]
\|(\nabla \rho,\nabla u, \nabla \mathcal{T},\nabla H)(t)\|=O(t^{-1/4}), \quad \mbox{as}\quad t\rightarrow \infty\\[2mm]
\|(\nabla^2u, \nabla^2 \theta,\nabla^2H)(t)\|=O(t^{-1/4}), \quad \mbox{as}\quad t\rightarrow \infty,\\[2mm]
\|(u,\mathcal{T}-1, H)(t)\|_{C^{0}(\Omega)}=O(t^{-1/4})\quad \mbox{as}\quad t\rightarrow \infty\\[2mm]
\|(\rho-1)(t)\|_{C^{0}(\Omega)}=O(t^{-1/8})\quad \mbox{as}\quad t\rightarrow \infty.
\end{split}
\end{equation*}
\end{thm}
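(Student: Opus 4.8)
The plan is to bootstrap the two facts furnished by Theorem \ref{thm-2}: the uniform-in-time energy bound $\mathcal{E}(t)\le \mathcal{E}(0)$ and the finiteness of the dissipation integral $\int_0^\infty \mathcal{D}(s)\,ds\le \mathcal{E}(0)$. Reading off the dissipation one has in particular $\int_0^\infty \|(\rho_t,u_t,\mathcal{T}_t,H_t)(s)\|^2\,ds<\infty$ and $\int_0^\infty \|(\nabla\rho,\nabla u,\nabla\mathcal{T},\nabla H)(s)\|^2\,ds<\infty$, while every quantity appearing in $\mathcal{E}$ (including $\|\nabla^2\rho\|$ and the second time derivatives) stays uniformly bounded. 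The strategy is then: (i) extract the sharp $t^{-1/2}$ decay of the time derivatives by a time-weighted energy estimate; (ii) convert this into the $t^{-1/4}$ decay of the spatial derivatives through the equations, the div-curl estimate and interpolation; and (iii) deduce the $C^0$ rates by Gagliardo--Nirenberg, the density being the slowest because it carries no dissipation.

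First I would establish $\|(\rho_t,u_t,\mathcal{T}_t,H_t)(t)\|=O(t^{-1/2})$. Differentiating the system once in $t$ and running the energy method yields a differential inequality $\tfrac{d}{dt}\mathcal{E}_1+\mathcal{D}_1\le 0$ (after absorbing the nonlinear terms by the smallness $\delta$), with $\mathcal{E}_1\approx\|(\rho_t,u_t,\mathcal{T}_t,H_t)\|^2$ and $\mathcal{D}_1\approx\|\nabla(u_t,\mathcal{T}_t,H_t)\|^2$. Multiplying by $(1+t)$ gives $\tfrac{d}{dt}[(1+t)\mathcal{E}_1]+(1+t)\mathcal{D}_1\le \mathcal{E}_1$, and after integration the only term left to control is $\int_0^t\mathcal{E}_1\,ds$, which is finite precisely because $\mathcal{E}_1$ is dominated by the dissipation of Theorem \ref{thm-2}. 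Hence $(1+t)\|(\rho_t,u_t,\mathcal{T}_t,H_t)(t)\|^2\le C$, the claimed rate. This step is clean because time differentiation preserves the dissipative structure of the viscous and magnetic-diffusion terms.

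Next I would transfer this decay to the spatial derivatives, which is the main obstacle. On the exterior domain there is no Poincar\'e inequality, so the base norms $\|u\|,\|\mathcal{T}-1\|,\|H\|,\|\rho-1\|$ do not decay, and the decay must be produced entirely at the level of derivatives through the equations. Reading the continuity equation as $\rho\,\mbox{div}\,u=-(\rho_t+u\cdot\nabla\rho)$ bounds $\|\mbox{div}\,u\|$ by $\|\rho_t\|$ plus a small multiple of $\|\nabla\rho\|$; reading the momentum equation for the pressure bounds $\|\nabla\rho\|$ by $\|u_t\|$, $\|\nabla^2u\|$, $\|\nabla\mathcal{T}\|$ and nonlinear terms; and the heat and induction equations, viewed as elliptic equations for $\mathcal{T}$ and $H$, bound $\|\nabla^2\mathcal{T}\|,\|\nabla^2H\|$ by $\|\mathcal{T}_t\|,\|H_t\|$. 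Since the first Betti number of $\Omega$ vanishes, Proposition \ref{prop-imp} controls $\|\nabla u\|,\|\nabla H\|$ by the corresponding $\mbox{div}$ and $\mbox{curl}$, which closes the system. The coupling is circular, and the density, having no dissipation, sets the bottleneck: the coupled inequalities close only at the reduced rate $\|(\nabla\rho,\nabla u,\nabla\mathcal{T},\nabla H)(t)\|=O(t^{-1/4})$ and $\|(\nabla^2u,\nabla^2\mathcal{T},\nabla^2H)(t)\|=O(t^{-1/4})$. Carrying out this bookkeeping---ensuring that the top-order density contributions in the momentum balance are absorbed using the smallness $\delta$ and the already-established decay rather than producing an uncontrolled loss---is the delicate part of the argument.

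Finally the $C^0$ rates follow from the Gagliardo--Nirenberg inequality $\|f\|_{C^0(\Omega)}\lesssim \|\nabla f\|^{1/2}\|\nabla^2 f\|^{1/2}$, valid in $\mathbb{R}^3$ for $f$ vanishing at infinity. For $f\in\{u,\mathcal{T}-1,H\}$ both factors are $O(t^{-1/4})$, giving $\|(u,\mathcal{T}-1,H)(t)\|_{C^0(\Omega)}=O(t^{-1/4})$; for $f=\rho-1$ one has $\|\nabla\rho\|=O(t^{-1/4})$ but $\|\nabla^2\rho\|$ is merely bounded (it never enters the dissipation), so the interpolation yields the slower $\|(\rho-1)(t)\|_{C^0(\Omega)}=O(t^{-1/8})$. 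This last loss is exactly the footprint of the absent density dissipation, and it is consistent with the fact that $\nabla^2\rho$ is deliberately omitted from the list of decaying quantities.
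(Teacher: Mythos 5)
Your steps (i) and (iii) are sound and essentially match the paper. The $t^{-1/2}$ decay of the time derivatives comes from a differential inequality $\phi'\le a(t)\phi$ with $\phi=\|(\rho_t,u_t,\mathcal{T}_t,H_t)\|^2$ integrable in time and $\int_0^\infty a\,dt<\infty$; the paper invokes Lemma \ref{lem-deacy}(i) rather than a time weight, but the two devices are interchangeable here (note that the nonlinear terms are not absorbed by smallness alone --- they produce the integrable coefficient $a(t)$, which is exactly why the hypothesis $\int a<\infty$ is needed). Your Gagliardo--Nirenberg endgame $\|f\|_{C^0}\lesssim\|\nabla f\|^{1/2}\|\nabla^2 f\|^{1/2}$ is a legitimate and slightly cleaner variant of the paper's $H^{1,4}$ computation and yields the same exponents $t^{-1/4}$ and $t^{-1/8}$.

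The gap is in step (ii), which is where the work is. The circular system you describe --- $\|\nabla\rho\|\lesssim\|u_t\|+\|\nabla^2u\|+\cdots$ from the momentum equation and $\|\nabla^2u\|\lesssim\|\nabla\rho\|+\cdots$ from elliptic regularity --- has $O(1)$ constants on both sides and cannot be closed by ``bookkeeping''; a specific mechanism must break the circle, and your proposal does not supply one. The paper uses three separate moves. First (Lemma \ref{lem4-2}), the stationary-type estimate $\|(\nabla u,\nabla\mathcal{T},\mbox{curl}H)\|^2\le C\|(\rho_t,u_t,\mathcal{T}_t,H_t)\|$ is obtained by testing the equations against the solution itself, and it requires the special multiplier $R(\theta+1)\ln(q+1)$ on the continuity equation so that the pressure--velocity coupling $\int_\Omega R(\theta+1)\,u\cdot\nabla q\,dx$ cancels; this also explains why the rate is $t^{-1/4}$ for \emph{all} gradients (the non-decaying $L^2$ norms of $u,\theta,H$ multiply the decaying time derivatives), not merely because the density lacks dissipation. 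Second (Lemma \ref{lem4-3}), $\|\mbox{curl}^2u\|=O(t^{-1/4})$ must be proved separately from the vorticity equation together with the damped inequality of Lemma \ref{lem-deacy}(ii). Third, and most importantly (Lemma \ref{lem4-4}), the Stokes estimate for $\|\nabla^2u\|+\|\nabla q\|$ requires $\|\nabla\,\mbox{div}\,u\|$, which via the differentiated continuity equation requires $\|\nabla q_t\|$ --- a quantity whose time integral is finite but which has no pointwise-in-time decay from the energy estimates. The paper resolves this with Deckelnick's boundary localization: $|\nabla q_t|^2$ is split into tangential components $e_{ik}q_{t,x_i}q_{t,x_k}$, which integrate by parts down to $C\|q_t\|\|\nabla q_t\|_1=O(t^{-1/2})$, and the normal component, for which one derives the damped ODE $\frac{d}{dt}\int_\Omega\frac{2\mu+\lambda}{\rho}\bigl(\psi(d)\nabla q\cdot\nabla d\bigr)^2dx+\frac{R}{2}\|\psi(d)\nabla q\cdot\nabla d\|^2\le Ct^{-1/2}$ and applies Lemma \ref{lem-deacy}(ii) again. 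This normal-derivative ODE is the idea that actually produces the decay of $\|\nabla\rho\|$ and $\|\nabla^2u\|$; without it, the step you defer as ``delicate bookkeeping'' cannot be carried out.
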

\begin{rmk}
Note that for the IBVP of the Navier-Stokes equations with Dirichlet condition $u|_{\partial\Omega}=0$ on exterior domain, the decay estimates of solutions to stationary state could be found in \cite{1992MZ,Deckelnick1993,Kobayashi1999,Matsumura1983,Jiang1996}. By contrast,  the big difference in this paper is that we consider the Navier-slip boundary condition for $u$, instead of the homogeneous Dirichlet condition.  Inspired by \cite{1992MZ}, we would like to form the inequalities as in lemma \ref{lem-deacy}. Different from \cite{1992MZ}, Lemma \ref{lem4-3} is used to obtain the estimate for $\|\mbox{curlcurlu}\|$, which is important for us to derive the decay estimates of $\|(\nabla^2u,\nabla\rho)\|$ (Lemma \ref{lem4-4}). Moreover,  some classical elliptic estimates will be applied for $H$ to find the corresponding decay rate.
\end{rmk}

\begin{rmk}
The algebraic decay for Cauchy problem of MHD equations was shown  in \cite{chentan2010,LY2011,ZZ2008} by careful analysis of the  linearized equation by using the Fourier transformation. However, due to the presence of the physical boundary, the approach of obtaining the decay for the Cauchy problem used in \cite{chentan2010,LY2011,ZZ2008} cannot be applied to the problem studied in this paper.   Moreover, the conditions required for initial data are lesser in the present paper.
\end{rmk}

This paper is organized as follows.  Section 2 contains some notations and basic lemmas for later use. In Section 3 we prove the global existence of strong solutions.
The improved  regularity of the strong solutions in Theorem \ref{thm-1} will be proved in Sec. 4.
Finally, Section 5 is devoted to obtaining the decay rates.
Through out of this paper, we introduce some notations for later use.

The $L^p-$norm on $x\in\Omega$ for $\phi(x,t)$ is given by
\begin{equation*}
	\begin{split}
		\|\phi\|_{L^{p}}\equiv\|\phi\|_{L^{p}(\Omega)}=\left(\int_\Omega|\phi|^p(x,t)dx\right)^{1/p},\,\,\,\|\phi\|\equiv\|\phi\|_{L^{2}},
	\end{split}
\end{equation*}
$1\leq p<+\infty$ and $H^m$ is used to denote the standard Sobolev space with the following norm
\begin{equation*}
	\|\phi\|_{m}\equiv\|\phi\|_{H^{m}(\Omega)}=\left(\sum_{l=0}^m\|\nabla^l\phi\|^2\right)^{1/2}.
\end{equation*}
$H^{m,p}$ is applied to express the standard Sobolev space with the following norm
\begin{equation*}
	\|\phi\|_{H^{m,p}}\equiv\left(\sum_{l=0}^m\|\nabla^l\phi\|^p_p\right)^{1/p}.
\end{equation*}
Besides, $C$ will be used as a generic constant independent of time $t$.

\section{Notations and basic lemmas}

In this section, we list some lemmas which will be frequently used throughout this paper.

Firstly, we recall some inequalities of Sobolev type.
\begin{lem}\label{sobolev inequ}
Let $\Omega\subset\mathbb{R}^{3}$ be a domain with smooth boundary. Then
\begin{equation*}
\begin{array}{lll}
(i)\ \|f\|_{C^{0}(\bar{\Omega})}\leq C\|f\|_{H^{m,p}}\quad \mbox{for}\quad f\in H^{m,p}(\Omega),\quad mp>3>(m-1)p.\\[2mm]
(ii)\ \|f\|_{L^{p}}\leq C\|f\|_{1}\quad \mbox{for}\quad f\in H^{1}(\Omega), \quad 2\leq p\leq 6.\\[2mm]
(iii)\ \|f\|_{L^{6}}\leq C\|\nabla f\|\quad \mbox{for}\quad f\in H^{1}(\Omega).
\end{array}
\end{equation*}
\end{lem}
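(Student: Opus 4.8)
The plan is to reduce all three inequalities to their classical counterparts on the whole space $\mathbb{R}^3$ by means of a bounded extension operator, which is available because $\partial\Omega$ is smooth (and, for the exterior domain at hand, compact). Concretely, there is a bounded linear operator $E\colon H^{m,p}(\Omega)\to H^{m,p}(\mathbb{R}^3)$ with $(Ef)|_\Omega=f$ and $\|Ef\|_{H^{m,p}(\mathbb{R}^3)}\le C\|f\|_{H^{m,p}(\Omega)}$ for every admissible pair $(m,p)$ (a Stein/Calder\'on extension). Once $f$ is extended to $\tilde f=Ef$ on $\mathbb{R}^3$, the inequalities on $\Omega$ follow from the corresponding estimates for $\tilde f$, together with the trivial restrictions $\|f\|_{L^q(\Omega)}\le\|\tilde f\|_{L^q(\mathbb{R}^3)}$ and $\|f\|_{C^{0}(\bar\Omega)}\le\|\tilde f\|_{C^{0}(\mathbb{R}^3)}$.

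For (i), I would invoke Morrey's inequality on $\mathbb{R}^3$: precisely the hypothesis $mp>3>(m-1)p$ is the range in which $H^{m,p}(\mathbb{R}^3)=W^{m,p}(\mathbb{R}^3)$ embeds continuously into the bounded H\"older space $C^{0,\,m-3/p}(\mathbb{R}^3)\subset C^{0}(\mathbb{R}^3)$, the exponent $m-3/p$ lying in $(0,1)$ exactly under these two inequalities. Applying this to $\tilde f$ and restricting yields $\|f\|_{C^{0}(\bar\Omega)}\le\|\tilde f\|_{C^{0}(\mathbb{R}^3)}\le C\|\tilde f\|_{H^{m,p}(\mathbb{R}^3)}\le C\|f\|_{H^{m,p}(\Omega)}$. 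For (ii), I would use the Gagliardo--Nirenberg--Sobolev embedding $H^{1}(\mathbb{R}^3)\hookrightarrow L^{p}(\mathbb{R}^3)$, valid for $2\le p\le 6=2^{*}$, applied to $\tilde f$; alternatively one interpolates $\|\tilde f\|_{L^{p}}\le\|\tilde f\|_{L^{2}}^{\theta}\|\tilde f\|_{L^{6}}^{1-\theta}$ between the endpoints $p=2$ and $p=6$. Either way $\|f\|_{L^{p}(\Omega)}\le\|\tilde f\|_{L^{p}(\mathbb{R}^3)}\le C\|\tilde f\|_{H^{1}(\mathbb{R}^3)}\le C\|f\|_{H^{1}(\Omega)}$, which is the asserted bound with the full $H^{1}$ norm on the right.

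The delicate point is (iii), where only $\|\nabla f\|$, and not the full $H^{1}$ norm, is allowed on the right. On $\mathbb{R}^3$ the sharp inequality $\|\tilde f\|_{L^{6}(\mathbb{R}^3)}\le C\|\nabla\tilde f\|_{L^{2}(\mathbb{R}^3)}$ holds, but transplanting it to $\Omega$ is not immediate: the reflection extension into the bounded hole $U=\mathbb{R}^3\setminus\bar\Omega$ only gives $\|\nabla\tilde f\|_{L^{2}(U)}\le C\|f\|_{H^{1}(\Omega')}$ on a collar $\Omega'$ of $\partial\Omega$, thereby reintroducing an unwanted $\|f\|_{L^{2}}$ term. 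The reason the clean estimate nonetheless survives is the unboundedness of $\Omega$: since $|\Omega|=\infty$, a nonzero constant does not lie in $L^{2}(\Omega)$, and the offending term is in fact controlled by $\|\nabla f\|_{L^{2}(\Omega)}$ through a capacity/Hardy-type mechanism at infinity. The cleanest way to make this rigorous, and the step I expect to be the main obstacle, is to work in the homogeneous Sobolev space $D^{1,2}(\Omega)$ (the closure of $C_{c}^{\infty}(\bar\Omega)$ under $\|\nabla\cdot\|_{L^{2}}$): one checks $H^{1}(\Omega)\subset D^{1,2}(\Omega)$ and then applies the exterior-domain Sobolev inequality $\|g\|_{L^{6}(\Omega)}\le C\|\nabla g\|_{L^{2}(\Omega)}$ for $g\in D^{1,2}(\Omega)$. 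Equivalently, one may prove the auxiliary bound $\|f\|_{L^{2}(\Omega')}\le C\|\nabla f\|_{L^{2}(\Omega)}$ on the collar directly, by a shell-by-shell Poincar\'e argument exploiting the decay of $f\in L^{2}$ along the annuli $\{R_{0}+j<|x|<R_{0}+j+1\}$ at infinity, and then feed it back into the extension estimate. All remaining manipulations are routine.
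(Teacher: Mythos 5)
The paper offers no proof of this lemma at all: it is stated as a recollection of classical Sobolev-type inequalities (``Firstly, we recall some inequalities of Sobolev type'') and used as a black box, so there is no argument in the paper to compare yours against. Your proposal is a correct standard proof. Parts (i) and (ii) are handled exactly as one would expect, via a Stein/Calder\'on extension and the Morrey and Gagliardo--Nirenberg--Sobolev embeddings on $\mathbb{R}^{3}$; the condition $mp>3>(m-1)p$ indeed pins the H\"older exponent $m-3/p$ into $(0,1)$. More importantly, you correctly isolate the one genuinely non-routine point: part (iii) is \emph{false} for a general smooth domain (a nonzero constant on a bounded $\Omega$ is a counterexample), so it must be read in the context of the exterior domain $\Omega=\mathbb{R}^{3}\setminus\bar U$ with $|\Omega|=\infty$, and the naive extension argument reintroduces an $L^{2}$ term on a collar of $\partial\Omega$ that must be absorbed. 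Your two suggested repairs --- showing $H^{1}(\Omega)\subset D^{1,2}(\Omega)$ by a cutoff argument at infinity (using $\|f\nabla\chi_R\|_{L^2}\le CR^{-1}\|f\|_{L^2(R<|x|<2R)}\to 0$) and then invoking the exterior-domain inequality for $D^{1,2}$, or proving the collar Poincar\'e bound $\|f\|_{L^{2}(\Omega')}\le C\|\nabla f\|_{L^{2}(\Omega)}$ directly --- are both standard and both work. The only caveat is that these steps are sketched rather than carried out, but the sketch is accurate and complete in outline; nothing in it would fail when written out.
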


The following lemma  allows one to control the $H^{m}$-norm of a vector valued function $v$ by its
$H^{m-1}$-norm of $\mbox{curl}v$ and $\mbox{div} v$ (see \cite{XX2007}).
\begin{lem}\label{lem-div-curl}
Let $\Omega$ be a domain in $\mathbb{R}^{N}$ with smooth boundary $\partial\Omega$ and outward normal $n$. Then there exists a constant $C>0$,
such that
\begin{equation}\label{A3-1}
\|v\|_{H^{s}(\Omega)}\leq C\big(\|\mbox{div}v\|_{H^{s-1}(\Omega)}+\|\mbox{curl}v\|_{H^{s-1}(\Omega)}
+ |v\cdot n|_{H^{s-1/2}(\partial\Omega)}+\|v\|\big),
\end{equation}
and
\begin{equation}
\|v\|_{H^{s}(\Omega)}\leq C\big(\|\mbox{div}v\|_{H^{s-1}(\Omega)}+\|\mbox{curl}v\|_{H^{s-1}(\Omega)}
+ |v\times n|_{H^{s-1/2}(\partial\Omega)}+\|v\|\big),
\end{equation}
for any $v\in H^{s}(\Omega)$, $s\geq1$.
\end{lem}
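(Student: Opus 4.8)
The plan is to prove the integer cases $s=1,2,\dots$ directly and recover fractional $s$ by interpolation, treating the first and second estimates in parallel. The algebraic backbone for the base case is the pointwise identity, valid for smooth vector fields on $\mathbb{R}^3$ (and, with the obvious normalization of $\mbox{curl}$, in $\mathbb{R}^N$),
\[
|\nabla v|^2=(\mbox{div}\,v)^2+|\mbox{curl}\,v|^2+\mbox{div}\big((v\cdot\nabla)v-(\mbox{div}\,v)\,v\big),
\]
which follows from $|\mbox{curl}\,v|^2=\partial_iv_j\partial_iv_j-\partial_iv_j\partial_jv_i$ after checking that the cross term $\partial_iv_j\partial_jv_i-(\mbox{div}\,v)^2$ is an exact divergence. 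Integrating over $\Omega$ and applying the divergence theorem gives
\[
\|\nabla v\|^2=\|\mbox{div}\,v\|^2+\|\mbox{curl}\,v\|^2+\int_{\partial\Omega}\big[(v\cdot\nabla v)\cdot n-(v\cdot n)\,\mbox{div}\,v\big]\,dS,
\]
so the entire difficulty of the $s=1$ case is compressed into controlling this single boundary integral in terms of the fractional traces appearing on the right-hand side.

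To handle it, I would decompose $v|_{\partial\Omega}=(v\cdot n)n+v_\tau$ and split the convective derivative as $v\cdot\nabla=(v\cdot n)(n\cdot\nabla)+v_\tau\cdot\nabla_\tau$. The terms carrying a \emph{full} normal derivative of $v$ then come multiplied by the explicit scalar factor $v\cdot n$, and by the trace theorem are bounded by $C\,|v\cdot n|_{H^{1/2}(\partial\Omega)}\,\|v\|_{H^1(\Omega)}$. After integrating the purely tangential piece $(v_\tau\cdot\nabla_\tau v)\cdot n$ by parts along $\partial\Omega$, the leftover is zeroth order in the derivatives of $v$ and involves the second fundamental form $\mathrm{II}$ of $\partial\Omega$; it is therefore dominated by $C\,|v|_{L^2(\partial\Omega)}^2$, and the small-constant trace inequality $|v|_{L^2(\partial\Omega)}^2\le\epsilon\|\nabla v\|^2+C_\epsilon\|v\|^2$ lets me absorb it into the left-hand side together with the added $\|v\|^2$. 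Choosing $\epsilon$ small closes the first estimate (\ref{A3-1}) for $s=1$. The second estimate is obtained by the analogous reorganization in which the second-order contributions are instead grouped with the tangential part of $v$, controlled by $|v\times n|_{H^{1/2}(\partial\Omega)}$; this complementarity between the two boundary traces is exactly what makes both versions available.

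For $s\ge2$ I would bootstrap through the vector Laplacian identity $-\Delta v=\mbox{curl}\,\mbox{curl}\,v-\nabla\,\mbox{div}\,v=:F$, where $\|F\|_{H^{s-2}}\le C\big(\|\mbox{div}\,v\|_{H^{s-1}}+\|\mbox{curl}\,v\|_{H^{s-1}}\big)$. Viewing $v$ as the solution of an elliptic system for $-\Delta$, a partition of unity and local flattening of $\partial\Omega$ reduce matters to the half-space, where the trace $v\cdot n$ (respectively $v\times n$) supplemented by $\mbox{div}\,v$ and $\mbox{curl}\,v$ can be shown to satisfy the Lopatinskii--Shapiro complementing condition. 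The Agmon--Douglis--Nirenberg estimates then yield $\|v\|_{H^s}\le C\big(\|\Delta v\|_{H^{s-2}}+\text{boundary data in }H^{s-1/2}+\|v\|_{H^{s-1}}\big)$, and the lower-order term $\|v\|_{H^{s-1}}$ is removed by induction down to the base case. A more hands-on alternative is to differentiate the $s=1$ identity in tangential directions using difference quotients and to recover the single remaining normal derivative algebraically from $\mbox{div}\,v$ and $\mbox{curl}\,v$.

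The hard part, and the step I would expect to demand the most care, is the boundary analysis: verifying that the $\mbox{div}$/$\mbox{curl}$ pair together with the normal (or tangential) trace forms a \emph{regular elliptic} boundary value problem, and obtaining the $s=1$ boundary integral sharply, with \emph{exactly one} power of $\|v\|_{H^1}$ (or purely zeroth-order curvature terms), rather than a naive bound that would leave a non-absorbable $\|\nabla v\|^2$ on the right. It is precisely here that the geometry of $\partial\Omega$ enters through its curvature, and where the smoothness assumption on the boundary is used.
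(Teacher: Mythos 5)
The paper does not prove this lemma at all: it is quoted verbatim from \cite{XX2007} (and ultimately goes back to Bourguignon--Brezis/von Wahl type div-curl estimates), so there is no in-paper argument to compare against. Your overall strategy --- the pointwise identity $|\nabla v|^2=(\mathrm{div}\,v)^2+|\mathrm{curl}\,v|^2+\mathrm{div}\big((v\cdot\nabla)v-(\mathrm{div}\,v)v\big)$ for $s=1$, followed by elliptic (ADN/Lopatinskii--Shapiro) bootstrapping through $-\Delta v=\mathrm{curl}\,\mathrm{curl}\,v-\nabla\,\mathrm{div}\,v$ for higher $s$, with the trace of $\mathrm{curl}\,v\times n$ (resp.\ $\mathrm{div}\,v$) supplying the missing boundary conditions --- is exactly the standard route to this result and is sound.

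There is, however, one step whose justification as written would fail. You claim that the boundary terms carrying a full normal derivative of $v$ are bounded ``by the trace theorem'' by $C\,|v\cdot n|_{H^{1/2}(\partial\Omega)}\|v\|_{H^{1}(\Omega)}$ because they carry the scalar factor $v\cdot n$. The trace theorem gives no control whatsoever on $\partial_n v$ at the boundary for $v\in H^1(\Omega)$ (not even in $H^{-1/2}(\partial\Omega)$), so a term like $\int_{\partial\Omega}(v\cdot n)(\partial_n v\cdot n)\,dS$ cannot be estimated this way. What actually saves the argument is an exact cancellation inside the combination $(v\cdot\nabla v)\cdot n-(v\cdot n)\,\mathrm{div}\,v$: writing $\mathrm{div}\,v=\partial_n v\cdot n+\mathrm{div}_\tau v_\tau+\kappa\,(v\cdot n)$ on $\partial\Omega$, the normal-derivative contributions cancel identically, leaving
\begin{equation*}
(v\cdot\nabla v)\cdot n-(v\cdot n)\,\mathrm{div}\,v
= v_\tau\cdot\nabla_\tau(v\cdot n)-(v\cdot n)\,\mathrm{div}_\tau v_\tau-\mathrm{II}(v_\tau,v_\tau)-\kappa\,(v\cdot n)^2 ,
\end{equation*}
i.e.\ only tangential derivatives of $v$ along $\partial\Omega$ plus curvature terms. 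The first two terms are then pairable in the $H^{1/2}(\partial\Omega)\times H^{-1/2}(\partial\Omega)$ duality against $v\cdot n$ (resp.\ $v_\tau$, controlled by $|v\times n|_{H^{1/2}}$ for the second estimate), giving the single power of $\|v\|_{H^{1}}$ you need, and the zeroth-order curvature terms are absorbed via the $\epsilon$-trace inequality exactly as you describe. With this cancellation made explicit (and the usual smooth-approximation remark, since for $v\in H^1$ none of the individual boundary traces of $\nabla v$ exist), your proof is complete; without it, the key boundary estimate is unjustified.
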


In addition, since the first Betti number of the exterior domain $\Omega$ in our paper vanishes,   we can apply Theorem 3.2 in \cite{wahl} to obtain the following  refined estimate  of $\|\nabla v\|$ which is crucial in the case of  exterior domain. We also note that the topological property of $\Omega$ is necessary  for the following  Proposition.

\begin{prop} (\cite{wahl})\label{prop-imp} Let $\nabla v\in L^{2}(\Omega)$, $v\in L^{6}(\Omega)$ and $v\cdot n|_{\partial\Omega}=0$. The estimate  (C independent of $v$)
\begin{equation}\label{important}
\|\nabla v\|\leq C_{\Omega}(\|\mbox{div}v\|+\|\mbox{curl}v\|)
\end{equation}
is true for all $v$ as above if and only if $\Omega$ has a first Betti number of zero.
\end{prop}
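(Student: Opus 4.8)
The plan is to read Proposition \ref{prop-imp} as a sharpening of Lemma \ref{lem-div-curl}: the latter, specialized to $s=1$ and combined with the boundary condition $v\cdot n|_{\partial\Omega}=0$ to annihilate the boundary term, already yields the weaker bound $\|\nabla v\|\le\|v\|_{H^1(\Omega)}\le C(\|\mbox{div}\,v\|+\|\mbox{curl}\,v\|+\|v\|)$. Thus the entire content of the proposition is the removal of the lower-order term $\|v\|$ from the right-hand side, and it is precisely here that the hypothesis $v\in L^{6}(\Omega)$ and the topology of $\Omega$ enter. As a starting point I would record the integration-by-parts (Weitzenböck) identity
\begin{equation*}
\int_\Omega|\nabla v|^2\,dx=\int_\Omega\big(|\mbox{div}\,v|^2+|\mbox{curl}\,v|^2\big)\,dx+\int_{\partial\Omega}\mathrm{II}(v,v)\,dS,
\end{equation*}
valid when $v\cdot n|_{\partial\Omega}=0$, where $\mathrm{II}$ is the second fundamental form of $\partial\Omega$; the boundary term is bounded by $C\|v\|^2_{L^2(\partial\Omega)}$ and is the only obstruction to an estimate free of lower-order terms.

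For the sufficiency direction (first Betti number zero implies \eqref{important}), I would argue by contradiction and compactness. Suppose the estimate fails; then there is a sequence $v_k$ with $v_k\cdot n|_{\partial\Omega}=0$, $\|\nabla v_k\|=1$, and $\|\mbox{div}\,v_k\|+\|\mbox{curl}\,v_k\|\to0$. By Lemma \ref{sobolev inequ}(iii) one has $\|v_k\|_{L^6}\le C\|\nabla v_k\|=C$, so $v_k$ is bounded in the homogeneous space $\{v:\nabla v\in L^2,\ v\in L^6\}$; passing to a subsequence, $\nabla v_k\rightharpoonup\nabla v$ in $L^2$ and $v_k\rightharpoonup v$ in $L^6$, and the limit satisfies $\mbox{div}\,v=0$, $\mbox{curl}\,v=0$, $v\cdot n|_{\partial\Omega}=0$, $v\in L^6$. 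When the first Betti number of $\Omega$ vanishes, every curl-free field is a gradient, so $v=\nabla\phi$ with $\phi$ harmonic; the condition $v\cdot n=0$ becomes the Neumann condition $\partial_n\phi=0$, and $\nabla\phi\in L^2$ together with the decay of harmonic functions at infinity forces $\phi$ to be constant via an energy identity on $\Omega\cap\{|x|\le R\}$ (the boundary integral over $\partial\Omega$ vanishes by the Neumann condition, and the flux through $\{|x|=R\}$ vanishes as $R\to\infty$). Hence $v=0$. Finally, the compactness of $\partial\Omega$ and the compact trace embedding give $v_k\to0$ strongly in $L^2(\partial\Omega)$, so in the Weitzenböck identity the right-hand side tends to $0$, whence $\|\nabla v_k\|\to0$, contradicting $\|\nabla v_k\|=1$.

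For the necessity direction, I would destroy the estimate by producing a counterexample whenever the topology is nontrivial. If $b_1(\Omega)>0$, Hodge--de Rham theory on $\Omega$ furnishes a nonzero finite-energy harmonic Neumann field $h$, i.e. $\mbox{div}\,h=0$, $\mbox{curl}\,h=0$, $h\cdot n|_{\partial\Omega}=0$, the dimension of the space of such fields being exactly $b_1(\Omega)$. For this $h$ the right-hand side of \eqref{important} vanishes while $\|\nabla h\|>0$, since $\nabla h=0$ would make $h$ a constant field with $h\cdot n=0$ on the closed surface $\partial\Omega$, forcing $h\equiv0$ because the Gauss map of a closed surface is surjective. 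This simultaneously establishes necessity and identifies the exceptional cases.

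The step I expect to be the genuine obstacle is the identification, on the unbounded exterior domain, of the space of finite-energy harmonic Neumann fields with the first cohomology of $\Omega$: one must show both that $b_1=0$ admits no nontrivial such field (used in sufficiency) and that $b_1>0$ produces one (used in necessity), and in either case the delicate point is controlling the behavior of the fields at infinity so that the $L^6$/finite-energy class detects exactly the topology and not spurious fields that grow or fail to decay. This is the technical heart of von Wahl's Theorem~3.2 \cite{wahl}, and it is what makes the clean, lower-order-free estimate \eqref{important} equivalent to the vanishing of the first Betti number.
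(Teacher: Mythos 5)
The paper does not prove this proposition: it is quoted verbatim from Theorem~3.2 of von Wahl \cite{wahl} and used as a black box, so there is no in-paper argument to compare yours against. Judged on its own, your outline follows the standard route to such a statement --- a div--curl Weitzenb\"ock identity reducing everything to a boundary term, a compactness/contradiction argument for sufficiency, and a nontrivial finite-energy harmonic Neumann field as a counterexample for necessity --- and its skeleton is sound.

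As a self-contained proof, however, it has a gap, and you have correctly located it yourself: both directions ultimately rest on the classification of finite-energy harmonic Neumann fields ($\mbox{div}\,h=\mbox{curl}\,h=0$, $h\cdot n|_{\partial\Omega}=0$, $h\in L^{6}$, $\nabla h\in L^{2}$) on an \emph{exterior} domain, namely that this space is trivial if and only if $b_1(\Omega)=0$. That is precisely the content of the cited theorem, so deferring it to \cite{wahl} makes the argument circular as a proof of the proposition, though legitimate as an explanation of why the topological hypothesis appears. Two smaller points also need care. First, the Weitzenb\"ock identity and the energy identity for the harmonic potential $\phi$ both require justification of the surface integrals at infinity on the unbounded domain; for the latter you should either normalize $\phi$ to decay or observe that the Neumann condition together with $\Delta\phi=0$ forces $\int_{|x|=R}\partial_r\phi\,dS=0$, which kills the leading term of the flux $\int_{|x|=R}\phi\,\partial_r\phi\,dS$. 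Second, Lemma \ref{sobolev inequ}(iii) is stated for $f\in H^{1}(\Omega)$, whereas your sequence $v_k$ lives only in the homogeneous class $\{\nabla v\in L^{2},\ v\in L^{6}\}$; the inequality does hold there, but you are invoking a slightly stronger statement than the paper records. None of these is a wrong turn; they are exactly where the actual work of \cite{wahl} sits.
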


Next, we list some elliptic estimates for elliptic equations with Navier-slip  or Neumann boundary conditions in the smooth exterior domain $\Omega$.
\begin{lem} (\cite{2011SWZ})\label{lem-elliptic-1}
	Let $s$ be an  integer $\geq0$. Suppose that $u$  satisfy the Lam\'{e} equation:
	\begin{equation*}
		\left\{
		\begin{array}{llll}
			-\mu\Delta u-(\mu+\lambda)\nabla\mbox{div}v=f,\quad \mbox{in}\quad \Omega,\\[2mm]
			u\cdot n=0,\quad \mbox{curl} u\times n=0\quad \mbox{on}\quad \partial\Omega
		\end{array}
		\right.
	\end{equation*}
	Then, it holds
	\begin{equation*}
		\|\nabla^2u\|_{H^{s}(\Omega)}\leq C\big(\|f\|_{H^{s}(\Omega)}+\|\nabla u\|_{L^{2}(\Omega)}\big).
	\end{equation*}
In particular,  the same conclusion also holds for Laplace operator, that is, for
\begin{equation*}
\quad v\cdot n=0,\quad \mbox{curl} v\times n=0\quad \mbox{on}\quad \partial\Omega,
\end{equation*} it holds
\begin{equation*}
\|\nabla^2v\|_{H^{s}(\Omega)}\leq C\big(\|\Delta v\|_{H^{s}(\Omega)}+\|\nabla v\|_{L^{2}(\Omega)}\big).
\end{equation*}
\end{lem}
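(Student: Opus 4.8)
The plan is to exploit the div--curl structure of the operator and reduce the estimate to elliptic regularity for scalar and vector Poisson problems, reconstructing the full Hessian through the div--curl inequalities of Lemma~\ref{lem-div-curl} and Proposition~\ref{prop-imp}. I would first establish the Laplace case and then deduce the L\'ame case from it. Using the identity $\Da v=\nabla\,\mbox{div}\,v-\mbox{curl}\,\mbox{curl}\,v$, set $g=\Da v$, $d=\mbox{div}\,v$ and $\omega=\mbox{curl}\,v$; applying $\mbox{div}$ and $\mbox{curl}$ to $\Da v=g$ and using $\mbox{div}\,\omega=0$ decouples the problem into the scalar and vector equations $\Da d=\mbox{div}\,g$ and $\Da\omega=\mbox{curl}\,g$ in $\Omega$.

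Next I would pin down the boundary data. Dotting $\nabla d-\mbox{curl}\,\omega=g$ with $n$ on $\partial\Omega$ and using that $\mbox{curl}\,v\times n=0$ forces the tangential part of $\omega$ to vanish, so that a surface-differential identity gives $(\mbox{curl}\,\omega)\cdot n=0$, I obtain the Neumann condition $\partial_n d=g\cdot n$. Scalar Neumann elliptic regularity then yields
\[
\|d\|_{s+1}\le C\big(\|\mbox{div}\,g\|_{s-1}+|g\cdot n|_{H^{s-1/2}(\partial\Omega)}+\|d\|\big)\le C\big(\|g\|_{s}+\|\nabla v\|\big),
\]
since $\|d\|=\|\mbox{div}\,v\|\le\|\nabla v\|$ and by the trace theorem. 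For $\omega$ I would invoke Lemma~\ref{lem-div-curl}: the boundary contribution drops because $\omega\times n=0$, the divergence term vanishes because $\mbox{div}\,\omega=0$, and $\mbox{curl}\,\omega=\mbox{curl}\,\mbox{curl}\,v=\nabla d-g$ is already controlled in $H^{s}$ by the bound on $d$, whence $\|\omega\|_{s+1}\le C(\|g\|_{s}+\|\nabla v\|)$.

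It remains to pass from these $H^{s+1}$ bounds on $\mbox{div}\,v$ and $\mbox{curl}\,v$ to $\|\nabla^2 v\|_{s}$, and this is the step I expect to be the main obstacle. The difficulty is precisely the exterior-domain feature emphasized in the introduction: there is no Poincar\'e inequality, so the zeroth-order term on the right must be $\|\nabla v\|$ and not $\|v\|$, whereas a naive application of Lemma~\ref{lem-div-curl} to $v$ at order $s+2$ would produce the forbidden $\|v\|$. To avoid this I would run the scalar Bochner identity componentwise,
\[
\int_\Omega|\nabla^2 v|^2\,dx=\int_\Omega|\Da v|^2\,dx+\int_{\partial\Omega}(\cdots)\,dS,
\]
in which $\int_\Omega|\Da v|^2=\int_\Omega|\nabla d-\mbox{curl}\,\omega|^2$ is already bounded by $\|d\|_{1}^2+\|\omega\|_{1}^2$. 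The surface integral is where the conditions $v\cdot n=0$ and $\mbox{curl}\,v\times n=0$ must enter decisively: after tangential integration by parts it should collapse to a second-fundamental-form term dominated by $\|\nabla v\|^2_{L^2(\partial\Omega)}$, which is then absorbed via the trace interpolation $\|\nabla v\|^2_{L^2(\partial\Omega)}\le\varepsilon\|\nabla^2 v\|^2+C_\varepsilon\|\nabla v\|^2$. Proposition~\ref{prop-imp} supplies the base gradient control $\|\nabla v\|\le C(\|\mbox{div}\,v\|+\|\mbox{curl}\,v\|)$ that is available precisely because the first Betti number vanishes, and higher values of $s$ follow by induction, differentiating tangentially and using the equation to trade normal for tangential derivatives.

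Finally, the L\'ame case reduces to the Laplace case just proved: rewriting the equation as $\mu\,\mbox{curl}\,\mbox{curl}\,u-(2\mu+\la)\nabla\,\mbox{div}\,u=f$, the same div/curl analysis gives $\|\mbox{div}\,u\|_{s+1}+\|\mbox{curl}\,u\|_{s+1}\le C(\|f\|_{s}+\|\nabla u\|)$, hence $\|\Da u\|_{s}=\|\nabla\,\mbox{div}\,u-\mbox{curl}\,\mbox{curl}\,u\|_{s}\le C(\|f\|_{s}+\|\nabla u\|)$, and applying the Laplace estimate to $u$ (which satisfies $u\cdot n=0$, $\mbox{curl}\,u\times n=0$) closes the argument.
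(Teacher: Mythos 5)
The paper does not actually prove this lemma: it is quoted verbatim from Sun--Wang--Zhang \cite{2011SWZ} (with the companion scalar case resting on the same circle of ideas as \cite{1997JMPA} and \cite{wahl}), so there is no in-paper argument to compare yours against. Judged on its own, your outline follows the standard route and is essentially sound. The reductions are correct: applying $\mbox{div}$ and $\mbox{curl}$ to the equation, the identity $(\mbox{curl}\,w)\cdot n=\mbox{div}_{\partial\Omega}(w\times n)$ turns $\mbox{curl}\,u\times n=0$ into the Neumann condition for $\mbox{div}\,u$, and Lemma \ref{lem-div-curl} with $\mbox{div}\,\omega=0$, $\omega\times n=0$ handles $\mbox{curl}\,u$; the passage from the Lam\'e system to the Laplace case via $\Delta u=\nabla\mbox{div}\,u-\mbox{curl}\,\mbox{curl}\,u$ is also fine (note $2\mu+\lambda>0$ is what makes the Neumann problem for $\mbox{div}\,u$ well-posed). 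You also correctly isolate the genuine exterior-domain difficulty, namely that the remainder must be $\|\nabla u\|$ rather than $\|u\|$, which rules out a naive application of Lemma \ref{lem-div-curl} at order $s+2$.

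The one place where your argument is asserted rather than carried out is precisely the step you flag as the obstacle: the boundary integral in the Bochner identity, $\int_{\partial\Omega}\big(\tfrac12\partial_n|\nabla v|^2-\partial_n v\cdot\Delta v\big)\,dS$, still contains second derivatives of $v$, and showing that the conditions $v\cdot n=0$, $\mbox{curl}\,v\times n=0$ reduce it to a second-fundamental-form term of first order requires the local-coordinate computation (splitting into tangential and normal derivatives and using the equation for the normal-normal component); likewise the induction on $s$ by tangential differentiation is named but not executed. These are exactly the computations that \cite{2011SWZ} and \cite{wahl} supply, and an alternative that avoids the Bochner identity altogether is to localize near $\partial\Omega$ with a cutoff and control the resulting zeroth-order term on the bounded collar by $\|v\|_{L^6}\leq C\|\nabla v\|$ (Lemma \ref{sobolev inequ}(iii)), using interior/whole-space estimates away from the boundary. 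So: right strategy, no false steps, but the technical heart of the lemma is deferred rather than proved.
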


\begin{lem}(\cite{2019JMAA}) \label{lem-elliptic-2}
Let $s$ be an  integer $\geq0$. Suppose that $(v,p)$ is the solution of the   Stokes problem:
\begin{equation*}
\left\{
\begin{array}{llll}
\mbox{div}v=g,\quad \mbox{in}\quad \Omega,\\[2mm]
-\Delta v+\nabla p=f,\quad \mbox{in}\quad \Omega,\\[2mm]
v\cdot n=0,\quad \mbox{curl} v\times n=0\quad \mbox{on}\quad \partial\Omega
\end{array}
\right.
\end{equation*}
Then, it holds
\begin{equation*}
\|\nabla^2v\|_{H^{s}(\Omega)}+\|\nabla p\|_{H^{s}(\Omega)}\leq C\big(\|f\|_{H^{s}(\Omega)}+\|g\|_{H^{s+1}(\Omega)}+\|\nabla v\|_{L^{2}(\Omega)}\big).
\end{equation*}
\end{lem}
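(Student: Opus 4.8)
The plan is to decouple the pressure from the velocity: I would estimate $\nabla p$ first by reducing it to a scalar elliptic (Neumann) problem, and then treat the momentum equation as a Lam\'e/Laplace system to which Lemma~\ref{lem-elliptic-1} directly applies. Introduce $q=p-g$ and use the identity $-\Delta v=\mbox{curl}\,\mbox{curl}\,v-\nabla\mbox{div}\,v$ together with $\mbox{div}\,v=g$ to rewrite the momentum equation as $\mbox{curl}\,\mbox{curl}\,v+\nabla q=f$. Taking the divergence and using $\mbox{div}\,\mbox{curl}=0$ gives $\Delta q=\mbox{div}\,f$ in $\Omega$. For the boundary datum, dotting $\mbox{curl}\,\mbox{curl}\,v+\nabla q=f$ with $n$ on $\partial\Omega$ yields $\partial_n q=f\cdot n-(\mbox{curl}\,\mbox{curl}\,v)\cdot n$, so the key preliminary point is to show $(\mbox{curl}\,\mbox{curl}\,v)\cdot n=0$ on $\partial\Omega$.

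First I would establish this boundary identity. Writing $w=\mbox{curl}\,v$, the condition $\mbox{curl}\,v\times n=0$ means $w$ is purely normal on $\partial\Omega$, i.e. its tangential part vanishes identically along the surface. Expanding $(\mbox{curl}\,w)\cdot n=\varepsilon_{ijk}n_i\partial_j w_k$ and splitting $\partial_j$ into tangential and normal parts, the normal-derivative contribution drops out by antisymmetry of $\varepsilon_{ijk}$, the tangential derivatives of the (identically vanishing) tangential part of $w$ are zero, and the remaining curvature term is $w_n\,\varepsilon_{ijk}n_i\,\partial^\Gamma_j n_k$, which vanishes because $\partial^\Gamma_j n_k$ is the symmetric second fundamental form contracted against the antisymmetric $\varepsilon_{ijk}$. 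Hence $(\mbox{curl}\,\mbox{curl}\,v)\cdot n=0$, and $q$ solves the Neumann problem $\Delta q=\mbox{div}\,f$ in $\Omega$, $\partial_n q=f\cdot n$ on $\partial\Omega$.

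Next comes the pressure estimate, which must be carried out on the exterior geometry without a Poincar\'e inequality. The weak formulation of the Neumann problem reads $\int_\Omega\nabla q\cdot\nabla\varphi=\int_\Omega f\cdot\nabla\varphi$ for all admissible $\varphi$ (the boundary integrals cancel), so $\nabla q$ is precisely the gradient part of the Helmholtz decomposition of $f$; taking $\varphi=q$ gives the base estimate $\|\nabla q\|\le\|f\|$ \emph{directly}, with no need to control $q$ itself. Standard interior and boundary elliptic regularity for the Neumann Laplacian, together with $\|\mbox{div}\,f\|_{H^{s-1}(\Omega)}\le\|f\|_{H^s(\Omega)}$ and the trace bound $|f\cdot n|_{H^{s-1/2}(\partial\Omega)}\le C\|f\|_{H^s(\Omega)}$, then bootstrap this to $\|\nabla q\|_{H^s(\Omega)}\le C\|f\|_{H^s(\Omega)}$, where crucially only $\|\nabla q\|$ (never $\|q\|$) enters the lower-order term. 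Since $p=q+g$, this gives $\|\nabla p\|_{H^s(\Omega)}\le C(\|f\|_{H^s(\Omega)}+\|g\|_{H^{s+1}(\Omega)})$.

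Finally I would close the velocity estimate: the momentum equation reads $-\Delta v=f-\nabla p$ with $v\cdot n=0$, $\mbox{curl}\,v\times n=0$, so the Laplace version of Lemma~\ref{lem-elliptic-1} yields $\|\nabla^2 v\|_{H^s(\Omega)}\le C(\|f-\nabla p\|_{H^s(\Omega)}+\|\nabla v\|)\le C(\|f\|_{H^s(\Omega)}+\|\nabla p\|_{H^s(\Omega)}+\|\nabla v\|)$, and combining with the pressure bound finishes the proof. I expect the main obstacle to be the pressure step on the exterior domain, for two reasons: the somewhat delicate derivation of the correct Neumann boundary datum, which hinges on the vanishing of $(\mbox{curl}\,\mbox{curl}\,v)\cdot n$ through the symmetry of the second fundamental form; and the fact that the unboundedness of $\Omega$ forbids bounding $q$ by $\nabla q$, so one must work solely with the Dirichlet energy and the Helmholtz structure in order to produce a constant $C$ independent of the solution's size at infinity.
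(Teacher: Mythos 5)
The paper itself offers no proof of this lemma: it is quoted verbatim from the cited reference \cite{2019JMAA} (Dhifaoui--Meslameni--Razafison), so there is no in-paper argument to compare against. Judged on its own merits, your sketch is the standard and essentially correct route to such an estimate, and it matches in spirit what the reference does: the key identity $(\mbox{curl}\,\mbox{curl}\,v)\cdot n=0$ when $\mbox{curl}\,v\times n=0$ on $\partial\Omega$ is genuine (it is the surface-divergence identity $n\cdot\mbox{curl}\,w=\mbox{div}_{\Gamma}(w\times n)$, and your second-fundamental-form computation is a correct hands-on derivation of it); the reduction of $q=p-g$ to the Neumann problem $\Delta q=\mbox{div}f$, $\partial_n q=f\cdot n$ is right; and closing the velocity estimate through the Laplace case of Lemma~\ref{lem-elliptic-1} is exactly how the two unknowns decouple. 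Two points in your sketch are where the actual work lies and should not be waved through. First, the energy identity $\int_\Omega|\nabla q|^2=\int_\Omega f\cdot\nabla q$ on an exterior domain requires justifying the integration by parts at infinity (a cutoff argument plus decay of $q$), which is precisely why the cited reference sets everything in weighted Sobolev spaces; without that framework the ``cancellation of boundary integrals'' is an assertion, not a proof. Second, the bootstrap $\|\nabla q\|_{H^s}\leq C\|f\|_{H^s}$ with only $\|\nabla q\|_{L^2}$ (never $\|q\|_{L^2}$) as the lower-order term needs the standard but nontrivial device of subtracting local constants before applying local elliptic estimates for the inhomogeneous Neumann problem, and a uniform covering argument to sum them over the unbounded domain. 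Neither point is a flaw in the strategy --- both are routine once the weighted-space machinery of \cite{2019JMAA} is in place --- but they are the substance of the lemma rather than afterthoughts.
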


\begin{lem}(\cite{1997JMPA})\label{lem-elliptic-3}
For any function $\theta\in H^{s}(\Omega)$ with $\frac{\partial \theta}{\partial n}|_{\partial\Omega}=0$, then it holds
\begin{equation*}
\|\nabla^2\theta\|_{H^{s}(\Omega)}\leq C\big(\|\Delta\theta\|_{H^{s}(\Omega)}+\|\nabla \theta\|_{L^{2}(\Omega)}\big).
\end{equation*}
\end{lem}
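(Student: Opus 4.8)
The plan is to treat this as a classical second--order a priori estimate for the Laplacian under the Neumann condition, the only genuinely new feature being that $\Omega$ is unbounded while its boundary $\partial\Omega=\partial U$ is \emph{compact} and smooth. I would first settle the base case $s=0$ and then bootstrap to general $s$. For $s=0$ the engine is a single global integration-by-parts identity. Taking (by density) a smooth, suitably decaying representative of $\theta$, so that the contributions from the sphere at infinity vanish, I integrate $\int_\Omega|\nabla^2\theta|^2$ by parts twice, moving one derivative at a time. The interior terms reassemble into $\int_\Omega|\Delta\theta|^2$, leaving
\begin{equation*}
\int_\Omega|\nabla^2\theta|^2\,dx=\int_\Omega|\Delta\theta|^2\,dx-\int_{\partial\Omega}(\Delta\theta)\,\partial_n\theta\,dS+\sum_{i,j}\int_{\partial\Omega}(\partial_i\partial_j\theta)(\partial_i\theta)\,n_j\,dS,
\end{equation*}
where $n$ is the unit normal on the compact boundary. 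The Neumann condition $\partial_n\theta=0$ annihilates the middle integral at once.

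The remaining boundary term $B=\sum_{i,j}\int_{\partial\Omega}(\partial_i\partial_j\theta)(\partial_i\theta)n_j\,dS=\int_{\partial\Omega}\nabla\theta\cdot\partial_n(\nabla\theta)\,dS$ still carries second derivatives, and the crucial reduction is to remove them using the boundary condition. Since $\partial_n\theta=0$, the gradient $\nabla\theta$ is purely tangential on $\partial\Omega$; differentiating $\nabla\theta\cdot n=0$ along the boundary (equivalently, invoking Reilly's identity with all $\partial_n\theta$--terms dropped) turns $B$ into a curvature term of the form $\int_{\partial\Omega}\mathrm{II}(\nabla_{\mathrm{tan}}\theta,\nabla_{\mathrm{tan}}\theta)\,dS$, where $\mathrm{II}$ is the second fundamental form. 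Because $\partial\Omega$ is compact and smooth, $\|\mathrm{II}\|_{L^\infty}<\infty$, so $|B|\le C\int_{\partial\Omega}|\nabla\theta|^2\,dS$. I then control this boundary norm by a trace/Ehrling inequality: for every $\epsilon>0$,
\begin{equation*}
\int_{\partial\Omega}|\nabla\theta|^2\,dS\le \epsilon\,\|\nabla^2\theta\|^2_{L^2(\Omega)}+C_\epsilon\,\|\nabla\theta\|^2_{L^2(\Omega)},
\end{equation*}
with constants depending only on a bounded collar of the compact boundary. Inserting this, choosing $\epsilon$ small, and absorbing $\epsilon\|\nabla^2\theta\|^2$ into the left-hand side gives $\|\nabla^2\theta\|\le C(\|\Delta\theta\|+\|\nabla\theta\|_{L^2})$, i.e.\ the case $s=0$.

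For $s\ge1$ I would localize with a partition of unity into one interior/far region and finitely many boundary charts. In the interior the whole-space Fourier multiplier bound $\|\partial_i\partial_j u\|\le\|\Delta u\|$ (from $|\xi_i\xi_j|\le|\xi|^2$), applied to cut-offs, yields the $H^s$ control, the commutators producing only lower-order derivatives. In each boundary chart I flatten $\partial\Omega$ and differentiate the equation tangentially; tangential differentiation preserves a Neumann-type condition up to curvature terms, so tangential second derivatives are estimated by the $s=0$ argument, while the purely normal second derivative is recovered algebraically from $\partial_n^2\theta=\Delta\theta-\Delta_{\mathrm{tan}}\theta-(\text{lower order})$, iterating on the number of normal derivatives. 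Every lower-order remainder $\|\nabla^{1+k}\theta\|$ ($1\le k\le s$) is then brought back to $\|\nabla\theta\|_{L^2}$ by Gagliardo--Nirenberg interpolation between $\|\nabla^{2+s}\theta\|$ and $\|\nabla\theta\|_{L^2}$ combined with the base case, so the right-hand side stays in the stated clean form. I expect the main obstacle to be precisely the boundary analysis: getting the curvature identity for $B$ right (sign and tangential-gradient structure), and, for $s\ge1$, organizing the tangential/normal splitting so that the normal derivatives are genuinely controlled and nothing stronger than $\|\nabla\theta\|_{L^2}$ survives on the right --- alongside the routine but necessary density argument legitimizing the integrations by parts on the unbounded domain.
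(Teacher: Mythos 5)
Your proposal is essentially correct, but it is worth pointing out that the paper does not prove this lemma at all: it is quoted verbatim from Amrouche--Girault--Giroire \cite{1997JMPA}, where the Neumann exterior problem is treated in the framework of weighted Sobolev spaces. Your argument is therefore a genuinely different, self-contained route. For $s=0$ your Bochner/Reilly computation is exactly the right engine: the double integration by parts, the annihilation of $\int_{\partial\Omega}(\Delta\theta)\partial_n\theta$ by the Neumann condition, the reduction of the remaining boundary term to $\int_{\partial\Omega}\mathrm{II}(\nabla_{\mathrm{tan}}\theta,\nabla_{\mathrm{tan}}\theta)$, and the trace--Ehrling absorption all go through because $\partial\Omega=\partial U$ is compact, so the second fundamental form is bounded and the collar is a bounded set; the only delicate point is the cutoff at infinity, which is handled by choosing radii $R_k\to\infty$ along which the flux through the sphere $|x|=R_k$ tends to zero (possible since $\nabla\theta,\nabla^2\theta\in L^2$). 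What your approach buys is an elementary proof with the sharp right-hand side $\|\Delta\theta\|_{H^s}+\|\nabla\theta\|_{L^2}$ and no weighted-space machinery; what the citation buys the authors is a one-line appeal to a result that also settles existence and the behavior at infinity, which your a priori estimate does not address (your absorption steps presuppose $\|\nabla^2\theta\|_{H^s}<\infty$, which is how the lemma is actually used in the paper).

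Two points in your $s\ge1$ bootstrap need explicit care. First, the cutoff commutators do not produce ``only lower-order \emph{derivatives}'': $\Delta(\chi\theta)=\chi\Delta\theta+2\nabla\chi\cdot\nabla\theta+\theta\Delta\chi$ contains $\theta$ itself, which is not controlled by $\|\nabla\theta\|_{L^2}$ on an exterior domain. Since both sides of the asserted inequality are invariant under $\theta\mapsto\theta-c$, you should normalize by subtracting the mean of $\theta$ over the (bounded) set where $\nabla\chi\neq0$ and invoke Poincar\'e--Wirtinger there; with that fix the scheme closes. Second, the Gagliardo--Nirenberg interpolation $\|\nabla^{1+k}\theta\|\le C\|\nabla^{2+s}\theta\|^{a}\|\nabla\theta\|^{1-a}$ must be the version valid on exterior domains with smooth (hence uniformly Lipschitz, cone-condition) boundary; it is, but it should be cited as such rather than taken from the whole-space theory.
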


Finally, we recall the following differential inequalities which is the main argument in proving Theorem \ref{thm-2}. The reader is referred to \cite{M1984} for a proof.
\begin{lem}\label{lem-deacy}
(i) Let $g\in C^{1}([t_0,\infty))$ such that $g\geq0$, $E=\int_{t_0}^{\infty}g(t)dt<\infty$ and
\begin{equation*}
g'(t)\leq a(t)g(t) \quad \mbox{for all}\quad t\geq t_0
\end{equation*}
where $a\geq0$, $M=\int_{t_0}^{\infty}a(t)dt<\infty$. Then
\begin{equation*}
g(t)\leq \Big((t_0g(t_0)+1)exp(E+M)-1\Big)t^{-1}\quad \forall t\geq t_0.
\end{equation*}
(ii) Let $g\in C^{1}([t_0,\infty))$ such that $g\geq0$ and
\begin{equation*}
g'(t)+c_0g(t)\leq c_1t^{-\alpha} \quad \mbox{for all}\quad t\geq t_0
\end{equation*}
with positive constants $c_0,c_1$ and $\alpha$. Then there exists a $t_1=t_1(c_0,\alpha)\geq t_0$ such that
\begin{equation*}
g(t)\leq Ct^{-\alpha},\quad \mbox{for} \quad t\geq t_{1}.
\end{equation*}
\end{lem}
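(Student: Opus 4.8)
The plan is to treat the two parts separately, since each reduces to a self-contained comparison argument for a scalar differential inequality, and the decay information enters in a different way in each.

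For part (i), the idea is to convert the multiplicative bound $g' \le a g$, together with the integrability of $g$ and $a$, into an $O(t^{-1})$ estimate by introducing the auxiliary function $\psi(t) = \log(t g(t) + 1)$. First I would differentiate, obtaining $\psi'(t) = (g(t) + t g'(t))/(t g(t)+1)$. Substituting $g'(t) \le a(t) g(t)$ into the numerator (legitimate since $t \ge t_0 > 0$) gives $\psi'(t) \le (g + a t g)/(tg+1)$. The key elementary observation is that, because $tg+1 \ge 1$ and $tg/(tg+1) \le 1$ with $g,a \ge 0$, the two terms split off cleanly as $g/(tg+1) \le g$ and $a t g/(tg+1) \le a$, so that $\psi'(t) \le g(t) + a(t)$. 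Integrating from $t_0$ to $t$ and using $E = \int_{t_0}^{\infty} g$ and $M = \int_{t_0}^{\infty} a$ yields $\log(tg(t)+1) \le \log(t_0 g(t_0)+1) + E + M$; exponentiating and solving for $g$ reproduces exactly the stated constant $(t_0 g(t_0)+1)\exp(E+M) - 1$ divided by $t$. The only genuinely clever point here is guessing the correct shift ``$+1$'' inside the logarithm, which is precisely what keeps both error terms bounded by $g$ and $a$ and recovers the sharp constant.

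For part (ii), the plan is the standard integrating-factor comparison. Multiplying $g' + c_0 g \le c_1 t^{-\alpha}$ by $e^{c_0 t}$ gives $(e^{c_0 t} g)' \le c_1 t^{-\alpha} e^{c_0 t}$, and integrating from $t_0$ to $t$ yields $g(t) \le e^{-c_0(t-t_0)} g(t_0) + c_1 e^{-c_0 t} \int_{t_0}^t s^{-\alpha} e^{c_0 s}\,ds$. The first term decays exponentially, hence is $\le C t^{-\alpha}$ for $t$ large. The real work is the second term. I would split the integral at $s = t/2$: on $[t_0, t/2]$ one has $e^{c_0 s} \le e^{c_0 t/2}$, so after multiplying by $e^{-c_0 t}$ this piece is bounded by $e^{-c_0 t/2}\int_{t_0}^{t/2} s^{-\alpha}\,ds$, which is exponentially small; on $[t/2, t]$ one estimates $s^{-\alpha} \le (t/2)^{-\alpha}$ and $\int_{t/2}^t e^{c_0 s}\,ds \le e^{c_0 t}/c_0$, giving a contribution $\le (2^{\alpha}/c_0)\, t^{-\alpha}$. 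Choosing $t_1 = t_1(c_0,\alpha)$ large enough (in particular $t_1 \ge 2 t_0$ and large enough that the two exponentially small contributions are dominated by $t^{-\alpha}$) then yields $g(t) \le C t^{-\alpha}$ for all $t \ge t_1$.

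I expect the main obstacle to be the sharpness in part (ii): a naive estimate of $e^{-c_0 t}\int_{t_0}^t s^{-\alpha} e^{c_0 s}\,ds$ loses the correct power of $t$, and one must exploit the fact that the exponential weight concentrates the mass of the integral near the upper endpoint $s = t$. The dyadic split at $t/2$ (or, equivalently, a single integration by parts) is exactly what recovers the decay rate $t^{-\alpha}$ rather than a weaker one; once that is in hand, everything else is routine.
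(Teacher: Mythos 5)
Your proof is correct and complete. Note that the paper itself does not prove this lemma at all --- it only refers the reader to \cite{M1984} --- so there is no in-paper argument to compare against; your write-up is in effect the standard proof from that reference. For (i), the auxiliary function $\log(tg(t)+1)$, the bound $\psi'\le g+a$ obtained from $tg/(tg+1)\le 1$ and $tg+1\ge 1$, and the exponentiation step reproduce the stated constant exactly (the hypothesis $t_0>0$ is implicit in the statement and you correctly flag where it is used). For (ii), the integrating factor together with the split of $\int_{t_0}^{t}s^{-\alpha}e^{c_0 s}\,ds$ at $s=t/2$ is exactly the right device to recover the full rate $t^{-\alpha}$; the only cosmetic discrepancy is that your $t_1$ and the constant $C$ also depend on $t_0$, $c_1$ and $g(t_0)$, not just on $(c_0,\alpha)$ as the statement's notation suggests, but this is harmless for every application made in the paper.
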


\section{Global existence  of strong solutions}

Let
\begin{equation*}
\rho=1+ q, \mathcal{T}=1+\theta,
\end{equation*}
 from the initial boundary value problem (\ref{prob2}), then $(q, u, \theta,H)$   satisfy the following system

\begin{equation}\label{prob-linear}
\left\{
\begin{array}{llll}
q_{t}+\mbox{div}u=-\mbox{div}(q u),\\[2mm]
\rho\big(u_{t}+u\cdot\nabla u\big)-\mu\Delta u-(\mu+\lambda)\nabla\mbox{div}u+R\nabla q+R\nabla\theta=-R\nabla(q\theta)+\mbox{curl}H\times H,\\[2mm]
c_{v}\rho(\theta_{t}+ u\cdot\nabla\theta)-\kappa\Delta\theta+R\mbox{div}u=\lambda(\mbox{div}u)^2+2\mu |S(u)|^2-R(\rho\theta+q)\mbox{div}u+\eta|\mbox{curl}H|^2,\\[2mm]
H_{t}- \eta \Delta H=\mbox{curl}(u\times H),\quad \quad \mbox{div}H=0,\\[2mm]
\end{array}
\right.
\end{equation}
with the initial data
\begin{equation}\label{in}
	(q, u, \theta, H)|_{t=0}=(q_0, u_0,  \theta_0, H_0)	
	\end{equation}
and the boundary conditions
\begin{equation}\label{boundary-linear}
u\cdot n=0,\quad \mbox{curl}u\times n=0, \quad H\cdot n=0,\quad \mbox{curl}H\times n=0,\quad \frac{\partial\theta}{\partial{n}}=0, \quad \mbox{on}\quad x\in\partial\Omega.
\end{equation}

Theorem \ref{thm-1} will be proved in this section. The local-in-time well-posedness in the smooth norm is quite standard,  therefore to prove Theorem \ref{thm-1}, it suffices to prove the following {\it a priori} estimates.
In order to describe clearly, we set  $\mathcal{E}_1(t)$ and $\mathcal{D}_1(t)$ as follows:
\begin{equation}\label{E3}
	\mathcal{E}_1(t)=\|(u,\theta,H)\|_{3}+\|q\|_{2}+\|(q_t,u_t, H_t,\theta_t)\|_{1}
\end{equation}
and
\begin{equation}\label{D3}
	\mathcal{D}_1(t)=\|(\nabla u,\nabla\theta,\nabla H)\|_{2}+\|\nabla q\|_{1}+\|(u_{t},\theta_{t},H_{t})\|_{2}+\|q_{t}\|_{1}+
\|(q_{tt},u_{tt},\theta_{tt},H_{tt})\|.
\end{equation}

\begin{prop}\label{prop} ({\it {a priori estimates}})
	Let $(q,u,\theta, H)$ be a solution to the initial boundary value problem (\ref{prob-linear}), (\ref{in}) and (\ref{boundary-linear}) in  $t\in[0,T]$. Then there exists  positive constants $C$ and $\delta_1$ which are independent of $t$,  such that if
	\begin{equation*}
	\sup_{0\leq t\leq T}\mathcal{E}_1(t)\leq \delta_1,
	\end{equation*}
	then there holds, for any $t\in[0,T]$,
	\begin{equation*}
	\mathcal{E}^2_1(t)+C\int_{0}^{t}\mathcal{D}^2_1(s)ds\leq C\mathcal{E}^2_1(0).
	\end{equation*}
\end{prop}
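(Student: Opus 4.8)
The plan is to establish the a priori estimate via a closed hierarchy of energy estimates, controlling $\mathcal{E}_1^2(t) + \int_0^t \mathcal{D}_1^2(s)\,ds$ by $\mathcal{E}_1^2(0)$ under the smallness assumption $\sup_{[0,T]}\mathcal{E}_1(t)\le\delta_1$. First I would derive the basic $L^2$ energy estimate: multiply $(\ref{prob-linear})_1$ by $q$, $(\ref{prob-linear})_2$ by $u$, $(\ref{prob-linear})_3$ by $\theta$, and $(\ref{prob-linear})_4$ by $H$, then integrate over $\Omega$ and sum. The boundary conditions (\ref{boundary-linear}) are designed so the boundary integrals arising from integration by parts on the viscous, heat-conduction, and magnetic-diffusion terms vanish: the identity $\int_\Omega (\mathrm{curl}\,H\times H)\cdot u$ cancels against the corresponding term in the $H$-equation (the Lorentz force/induction cancellation), and terms like $\int_{\partial\Omega}(\mathrm{curl}\,u\times n)\cdot u$ drop because $\mathrm{curl}\,u\times n=0$. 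This yields control of $\|(q,u,\theta,H)\|^2$ in time together with the dissipation $\|\nabla u\|^2 + \|\mathrm{div}\,u\|^2 + \|\nabla\theta\|^2 + \|\nabla H\|^2$. Here the nonlinear terms on the right are quadratic or higher and are absorbed using Lemma \ref{sobolev inequ} and the smallness of $\mathcal{E}_1$.

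Next I would run the estimates on spatial derivatives up to order three for $(u,\theta,H)$ and order two for $q$, and on time derivatives. The key structural point, and what makes the Navier-slip/perfect-conductor setting work on an exterior domain, is that the dissipation naturally controls $\|\mathrm{div}\,u\|_{H^{s-1}}$ and $\|\mathrm{curl}\,u\|_{H^{s-1}}$ rather than $\|\nabla u\|$ directly. Since the first Betti number of $\Omega$ vanishes, Proposition \ref{prop-imp} upgrades these to $\|\nabla u\|\le C_\Omega(\|\mathrm{div}\,u\|+\|\mathrm{curl}\,u\|)$ (using $u\cdot n=0$), and Lemma \ref{lem-div-curl} propagates this to higher order; similarly for $H$ via $H\cdot n=0$. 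For the elliptic/pressure regularity I would invoke Lemma \ref{lem-elliptic-1} on the Lamé operator with the Navier-slip boundary data to bound $\|\nabla^2 u\|_{H^s}$ by the force plus $\|\nabla u\|$, Lemma \ref{lem-elliptic-3} for $\theta$ under the Neumann condition, and standard elliptic theory for $H$. Time-derivative estimates ($q_t,u_t,\theta_t,H_t$ in $H^1$, and the second-order $u_{tt}$ etc. in $L^2$) come from differentiating the system in $t$ and repeating the energy argument; the time regularity of $q$ is recovered algebraically from the continuity equation, and the highest temporal/spatial norms in $\mathcal{E}_1$ not directly produced by energy estimates are closed using the equations themselves to trade time derivatives for spatial ones.

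The main obstacle I expect is the boundary term bookkeeping in the higher-order estimates: when differentiating tangentially or applying $\mathrm{curl}$/$\mathrm{div}$ to the equations, the boundary conditions $\mathrm{curl}\,u\times n=0$ and $\mathrm{curl}\,H\times n=0$ must be differentiated consistently, and one must verify that the geometric (curvature) terms generated on $\partial\Omega$ are either absorbable or vanish; this is precisely where the choice of slip and perfect-conductor conditions is essential, and it is the delicate analog of the cancellations exploited in \cite{CHS2021-1}. A secondary difficulty is that, unlike the bounded-domain case, no Poincaré inequality is available to control $\|(u,\theta,H)\|$ by its gradient, so one cannot expect the low-order $L^2$ norms to be dissipated; accordingly $\mathcal{E}_1$ retains undissipated zeroth-order pieces and the estimate is of the form $\mathcal{E}_1^2(t)+C\int_0^t\mathcal{D}_1^2\le C\mathcal{E}_1^2(0)$ rather than a decaying one. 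After assembling all the estimates, summing with suitable small weights, and absorbing every nonlinear term into the dissipation $\mathcal{D}_1^2$ using $\sup\mathcal{E}_1\le\delta_1$ together with Lemma \ref{sobolev inequ}, I would apply a Gronwall/continuation argument to close the bound and complete the proof.
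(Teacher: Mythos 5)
Your proposal follows essentially the same route as the paper: a basic $L^2$ energy identity exploiting the Lorentz-force/induction cancellation, higher-order estimates organized around $\operatorname{div}u$, $\operatorname{curl}u$, $\operatorname{curl}H$ upgraded to full gradients via Proposition \ref{prop-imp} (first Betti number zero) and Lemma \ref{lem-div-curl}, elliptic estimates from Lemmas \ref{lem-elliptic-1}--\ref{lem-elliptic-3}, time-differentiated energy estimates, recovery of the undissipated $q$-quantities from the continuity and momentum equations, and absorption of all nonlinear terms into $\mathcal{D}_1^2$ by the smallness $\mathcal{E}_1\le\delta_1$ (the paper closes by direct absorption rather than Gronwall, but this is immaterial). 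Your anticipated difficulties are the right ones, and the paper resolves the boundary bookkeeping exactly as you suggest, via identities such as $\int_\Omega\operatorname{curl}^2u\cdot\nabla\operatorname{div}u\,dx=0$ and the preservation of the slip/perfect-conductor conditions under $\partial_t$ and $\operatorname{curl}$.
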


Now, we shall divide the
proof of Proposition \ref{prop}  into several lemmas.

\subsection{Estimates for lower-order derivatives}\label{subsection3-1}
 To begin with, we have the following basic energy estimate.
\begin{lem}\label{lem-1}
\begin{equation*}
	\|(q,u,\theta,H)\|^2+c_0\int^t_0\|(\nabla u,\nabla\theta,\nabla H)\|^2ds\leq \|(q,u,\theta,H)\|^2(0)+ C\delta_1\int^t_0\mathcal{D}_1^2(s)ds,
\end{equation*}	
where $c_0>0$, $C>0$ are positive constants independent of $t$.
\end{lem}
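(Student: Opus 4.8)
The plan is to derive the basic energy estimate by testing each equation of the system \ef{prob-linear} against its natural multiplier and summing, so that the viscosity and diffusion terms produce the dissipation on the left-hand side while all nonlinear contributions are absorbed into $C\delta_1\int_0^t\mathcal{D}_1^2(s)ds$. Concretely, I would multiply $\ef{prob-linear}_2$ by $u$, the temperature equation $\ef{prob-linear}_3$ by $\theta$, and the magnetic equation $\ef{prob-linear}_4$ by $H$, then integrate over $\Omega$ and in time. For the density I would multiply $\ef{prob-linear}_1$ by $Rq$ (to match the pressure coupling $R\nabla q$ appearing in the momentum equation). Because $\rho=1+q$ with $q$ small, the kinetic-type terms $\int\rho(u_t+u\cdot\nabla u)\cdot u\,dx$ should, after integration by parts using the continuity equation, be rewritten as $\frac12\frac{d}{dt}\int\rho|u|^2dx$ up to lower-order remainders; the remainders involve $q$ times quadratic derivative quantities and are therefore of size $\delta_1$ times $\mathcal{D}_1^2$.

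The key integration-by-parts steps rely essentially on the boundary conditions \ef{boundary-linear}. For the viscous term, $-\int(\mu\Delta u+(\mu+\lambda)\nabla\mathrm{div}\,u)\cdot u\,dx$ should integrate by parts to $\mu\|\nabla u\|^2+(\mu+\lambda)\|\mathrm{div}\,u\|^2$ (or the equivalent $\mathrm{curl}$–$\mathrm{div}$ form) with vanishing boundary contributions, since $u\cdot n=0$ and $\mathrm{curl}\,u\times n=0$ kill the boundary integrals $\int_{\partial\Omega}(\cdot)\,dS$. Similarly $-\kappa\int\Delta\theta\,\theta\,dx=\kappa\|\nabla\theta\|^2$ using $\partial\theta/\partial n=0$, and $-\eta\int\Delta H\cdot H\,dx=\eta\|\nabla H\|^2$ using $H\cdot n=0$, $\mathrm{curl}\,H\times n=0$. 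The cross terms between pressure and velocity, namely $R\int\nabla q\cdot u\,dx$ against $R\int q\,\mathrm{div}\,u\,dx$ from the density equation, should cancel after integration by parts; likewise the linear coupling $R\int\nabla\theta\cdot u$ and $R\int\theta\,\mathrm{div}\,u$ must be arranged to cancel or be controlled. The Lorentz force $\int(\mathrm{curl}\,H\times H)\cdot u\,dx$ and the induction nonlinearity $\int\mathrm{curl}(u\times H)\cdot H\,dx$ should combine and cancel up to a term bounded by $\delta_1\mathcal{D}_1^2$, which is the standard MHD energy structure.

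After these cancellations the left side becomes $\frac12\frac{d}{dt}\int\big(\rho|u|^2+c_v\rho\theta^2+|H|^2+Rq^2\big)dx+c_0\|(\nabla u,\nabla\theta,\nabla H)\|^2$ for some $c_0>0$, while the right side collects every genuinely nonlinear term. Each such term is at least cubic: it is a product of a small quantity (controlled by $\sup_t\mathcal{E}_1\le\delta_1$, hence an $L^\infty$ or $L^6$ bound via Lemma \ref{sobolev inequ}) with two factors belonging to $\mathcal{D}_1$. For instance, $\int\rho(u\cdot\nabla u)\cdot u$, $\int\lambda(\mathrm{div}\,u)^2\theta$, $\int|S(u)|^2\theta$, and $\int\eta|\mathrm{curl}\,H|^2\theta$ are each estimated by placing the lowest-regularity factor in $L^\infty$ (bounded by $\mathcal{E}_1\le\delta_1$) and the remaining two in $L^2$ (bounded by $\mathcal{D}_1$), yielding the claimed $C\delta_1\int_0^t\mathcal{D}_1^2\,ds$. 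Since $\rho=1+q$ stays near $1$, the time-integrated leading term dominates $\|(q,u,\theta,H)\|^2(t)$ from below up to the harmless $\delta_1$ factor, and integrating in $t$ from $0$ yields the stated inequality.

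I expect the main obstacle to be the bookkeeping of the nonlinear right-hand side: one must verify that after all boundary and cross-term cancellations, every remaining term truly carries a small prefactor controlled by $\delta_1$ and splits as (small)$\times\mathcal{D}_1^2$, with no stray term of pure $\mathcal{D}_1^2$ size. In particular the temperature source terms $\lambda(\mathrm{div}\,u)^2+2\mu|S(u)|^2+\eta|\mathrm{curl}\,H|^2$ tested against $\theta$, and the convective terms tested against the kinetic energy, require the $L^\infty$ smallness of $(u,\theta,H)$ and $\nabla$-quantities supplied by $\mathcal{E}_1\le\delta_1$; confirming that these estimates close without borrowing from a higher-order norm is the delicate point, whereas the cancellation structure and the sign of the dissipation are comparatively routine.
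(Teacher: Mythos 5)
Your overall strategy (test each equation with its natural multiplier, exploit the MHD cancellation between the Lorentz force and the induction nonlinearity, and absorb the cubic remainders into $C\delta_1\int_0^t\mathcal{D}_1^2\,ds$) matches the paper's proof. However, there is a genuine gap at the one point that is actually delicate in this setting: how the dissipation $c_0\|(\nabla u,\nabla\theta,\nabla H)\|^2$ appears on the left-hand side. Your claim that $-\int(\mu\Delta u)\cdot u\,dx$ integrates by parts to $\mu\|\nabla u\|^2$ ``with vanishing boundary contributions'' under the Navier-slip conditions is false: the boundary term $\mu\int_{\partial\Omega}(\partial_n u)\cdot u\,dS$ does not vanish when only $u\cdot n=0$ and $\mathrm{curl}\,u\times n=0$ hold; it reduces to a curvature term on $\partial\Omega$. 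The two forms you call ``equivalent'' differ precisely by this boundary integral. The correct computation, as in the paper, uses $\Delta u=\nabla\mathrm{div}\,u-\mathrm{curl}\,\mathrm{curl}\,u$, for which the boundary terms do vanish, and it produces only $\mu\|\mathrm{curl}\,u\|^2+(2\mu+\lambda)\|\mathrm{div}\,u\|^2$ and $\eta\|\mathrm{curl}\,H\|^2$ as dissipation --- not the full gradients.

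The missing idea is the passage from these curl/div norms to $\|\nabla u\|^2$ and $\|\nabla H\|^2$, which in an exterior domain is not routine: there is no Poincar\'e inequality, and the estimate $\|\nabla v\|\le C(\|\mathrm{div}\,v\|+\|\mathrm{curl}\,v\|)$ for $v\cdot n|_{\partial\Omega}=0$ (Proposition \ref{prop-imp}, von Wahl's theorem) holds \emph{if and only if} the first Betti number of $\Omega$ vanishes. This is exactly the topological hypothesis the paper singles out as the key observation making the exterior-domain argument work, and it is applied to $u$ and to $H$ (the latter using $\mathrm{div}\,H=0$) to convert the curl/div dissipation into $c_0\int_0^t\|(\nabla u,\nabla H)\|^2\,ds$. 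Without invoking this (or an equivalent div-curl estimate), your final display asserting the dissipation of the full gradients does not follow from the energy identity you derive. The rest of your bookkeeping of the nonlinear terms is consistent with the paper's one-line bound by $C\mathcal{E}_1(t)\mathcal{D}_1^2(t)$.
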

\begin{proof}
Computing the following integral
\begin{equation*}
\int_{\Omega}\left\{(\ref{prob-linear})_1Rq+(\ref{prob-linear})_2\cdot u+(\ref{prob-linear})_3\theta+(\ref{prob-linear})_4\cdot H\right\} dx,
\end{equation*}
and noting that $\Delta u=-\mbox{curl}^2u+\nabla \mbox{div}u$,
after integrating by parts, one has
\begin{equation}\label{1}
\begin{split}
&\frac{1}{2}\frac{d}{dt}\int_{\Omega} \Big(Rq^2+\rho|u|^2+c_{v}\rho\theta^2+|H|^2\Big)dx
+\mu\|\mbox{curl}u\|^2+(2\mu+\lambda)\|\mbox{div}u\|^2+\kappa\|\nabla\theta\|^2+\eta\|\mbox{curl}H\|^2\\[2mm]
&=\int_{\Omega}\left(\Big(-\frac{1}{2}Rq^2-R\rho\theta^2\Big)\mbox{div}u+\Big(\lambda(\mbox{div}u)^2+2\mu |S(u)|^2+\eta|\mbox{curl}H|^2\Big)\theta\right)dx\\[2mm]
&\leq C\mathcal{E}(t)\mathcal{D}_1^2(t).
\end{split}
\end{equation}
Applying  Proposition \ref{prop-imp} to $u$ and $H$, it holds that
\begin{equation}\label{u}
\|\nabla u\|^2\leq C(\|\mbox{curl}u\|^2+\|\mbox{div}u\|^2),
\end{equation}
and
\begin{equation}\label{H}
\|\nabla H\|^2\leq C\|\mbox{curl}H\|^2.
\end{equation}
Therefore, integrating (\ref{1}) over $[0,t]$,  and noting that $2\mu+\lambda>0$, we prove the basic energy estimate.
\end{proof}

Next, we prove estimates of the first-order  derivatives $(\nabla u,\nabla \theta,\nabla H)$.
\begin{lem}\label{lem-ut}
	\begin{equation*}
		\|(\nabla u,\nabla\theta,\nabla H)\|^2+c\int^t_0\|(u_t,q_t,\theta_t,H_t)\|^2ds\leq C\mathcal{E}^2_1(0)+C\delta_1\int^t_0\mathcal{D}_1^2(s)ds,
	\end{equation*}	
	where $c>0$, $C>0$ are positive constants independent of $t$.	

\end{lem}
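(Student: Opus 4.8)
The plan is to test the momentum, temperature and magnetic equations in $(\ref{prob-linear})$ against $u_t$, $\theta_t$ and $H_t$ respectively, integrate over $\Omega$ and add. Using $\Delta=-\mbox{curl}^2+\nabla\mbox{div}$ and integrating by parts, the viscous and diffusive terms yield the dissipation $\int_\Omega\rho|u_t|^2+c_v\int_\Omega\rho|\theta_t|^2+\int_\Omega|H_t|^2$ together with the exact derivative $\frac{d}{dt}\big(\frac{\mu}{2}\|\mbox{curl}u\|^2+\frac{2\mu+\lambda}{2}\|\mbox{div}u\|^2+\frac{\kappa}{2}\|\nabla\theta\|^2+\frac{\eta}{2}\|\mbox{curl}H\|^2\big)$, where for $H$ one uses $\mbox{div}H=0$ so that $\Delta H=-\mbox{curl}^2H$. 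Every boundary integral vanishes by $(\ref{boundary-linear})$: the $\mbox{curl}^2$ terms leave $\int_{\partial\Omega}(\mbox{curl}u\times n)\cdot u_t$ and $\int_{\partial\Omega}(\mbox{curl}H\times n)\cdot H_t$, zero since $\mbox{curl}u\times n=\mbox{curl}H\times n=0$; the $\nabla\mbox{div}u$ term leaves $\int_{\partial\Omega}\mbox{div}u\,(u_t\cdot n)$, zero since $u\cdot n=0$ for all $t$ gives $u_t\cdot n=0$; and the $\Delta\theta$ term leaves $\int_{\partial\Omega}\frac{\partial\theta}{\partial n}\,\theta_t=0$. Since $\rho=1+q$ with $\|q\|_{L^\infty}\le C\delta_1$ small, one has $\rho\ge 1/2$, so the dissipation controls $\|u_t\|^2$, $\|\theta_t\|^2$ and $\|H_t\|^2$ from below.

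The delicate part is the linear coupling. Integration by parts gives $R\int_\Omega\nabla q\cdot u_t=-R\frac{d}{dt}\int_\Omega q\,\mbox{div}u+R\int_\Omega q_t\,\mbox{div}u$ and $R\int_\Omega\nabla\theta\cdot u_t=-R\frac{d}{dt}\int_\Omega\theta\,\mbox{div}u+R\int_\Omega\theta_t\,\mbox{div}u$, while the temperature equation contributes $R\int_\Omega\mbox{div}u\,\theta_t$, so the two $\theta_t\mbox{div}u$ terms combine into $2R\int_\Omega\theta_t\,\mbox{div}u$. I would move the exact derivatives to the right and integrate in time. The endpoint terms $R\int_\Omega q\,\mbox{div}u\big|_0^t$ and $R\int_\Omega\theta\,\mbox{div}u\big|_0^t$ are estimated by Young's inequality as $\epsilon\|\mbox{div}u\|^2(t)+C_\epsilon\big(\|q\|^2+\|\theta\|^2\big)(t)+C\mathcal{E}_1^2(0)$; the first is absorbed into the pointwise $\|\mbox{div}u\|^2(t)$ on the left for $\epsilon$ small, while $\|q\|^2(t)+\|\theta\|^2(t)$ is bounded by $C\big(\mathcal{E}_1^2(0)+\delta_1\int_0^t\mathcal{D}_1^2\big)$ using the already established Lemma \ref{lem-1}. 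For the remaining integral $\int_0^t 2R\int_\Omega\theta_t\mbox{div}u$, Young's inequality absorbs an $\epsilon\int_0^t\|\theta_t\|^2$ into the dissipation and leaves $C_\epsilon\int_0^t\|\mbox{div}u\|^2$, controlled by Lemma \ref{lem-1} since $\int_0^t\|\mbox{div}u\|^2\le\int_0^t\|\nabla u\|^2$. For $\int_0^t R\int_\Omega q_t\mbox{div}u$ I would instead use the Cauchy–Schwarz inequality in time, noting that both $\int_0^t\|\mbox{div}u\|^2$ (by Lemma \ref{lem-1}) and $\int_0^t\|q_t\|^2$ (by the continuity equation $q_t=-\mbox{div}u-\mbox{div}(qu)$ together with Lemma \ref{lem-1}) are $\le C(\mathcal{E}_1^2(0)+\delta_1\int_0^t\mathcal{D}_1^2)$.

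All genuinely nonlinear terms — the convective terms $\int_\Omega\rho(u\cdot\nabla u)\cdot u_t$ and $c_v\int_\Omega\rho(u\cdot\nabla\theta)\theta_t$, the momentum source $\int_\Omega(-R\nabla(q\theta)+\mbox{curl}H\times H)\cdot u_t$, the temperature sources tested against $\theta_t$, and $\int_\Omega\mbox{curl}(u\times H)\cdot H_t$ — are cubic in the small quantities, so using $\sup_{[0,T]}\mathcal{E}_1\le\delta_1$, the Sobolev embeddings of Lemma \ref{sobolev inequ} and Young's inequality, each is bounded by $C\delta_1\mathcal{D}_1^2(t)$ and hence by $C\delta_1\int_0^t\mathcal{D}_1^2$ after integration. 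Collecting everything gives control, at time $t$, of $\|\mbox{curl}u\|^2+\|\mbox{div}u\|^2+\|\nabla\theta\|^2+\|\mbox{curl}H\|^2$ plus $\int_0^t(\|u_t\|^2+\|\theta_t\|^2+\|H_t\|^2)$. I then invoke Proposition \ref{prop-imp} (applicable since the first Betti number of $\Omega$ vanishes and $u\cdot n=H\cdot n=0$) to replace $\|\mbox{curl}u\|^2+\|\mbox{div}u\|^2$ by $\|\nabla u\|^2$ and $\|\mbox{curl}H\|^2$ by $\|\nabla H\|^2$, and append the bound for $\int_0^t\|q_t\|^2$, which yields the stated inequality. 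The main obstacle is exactly the coupling terms of the second paragraph: one must make sure that the auxiliary quantities $\|q\|^2(t)$, $\|\theta\|^2(t)$, $\int_0^t\|\mbox{div}u\|^2$ and $\int_0^t\|q_t\|^2$, which appear with constants that are not small, are all reabsorbed through Lemma \ref{lem-1} and the continuity equation rather than left on the right with an uncontrolled coefficient.
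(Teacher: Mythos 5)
Your argument is correct and follows essentially the same route as the paper: test $(\ref{prob-linear})_{2,3,4}$ against $(u_t,\theta_t,H_t)$, integrate by parts to produce the exact derivative of $\mu\|\mbox{curl}u\|^2+(2\mu+\lambda)\|\mbox{div}u\|^2+\kappa\|\nabla\theta\|^2+\eta\|\mbox{curl}H\|^2$ minus $\frac{d}{dt}\int_\Omega R(q+\theta)\mbox{div}u\,dx$, absorb the coupling terms via Young's inequality and Lemma \ref{lem-1}, bound the cubic nonlinearities by $C\delta_1\mathcal{D}_1^2$, and finish with Proposition \ref{prop-imp}. The only cosmetic difference is that the paper also tests $(\ref{prob-linear})_1$ against $q_t$ so that $\|q_t\|^2$ appears directly in the dissipation, whereas you recover $\int_0^t\|q_t\|^2$ afterwards from the continuity equation; both are equivalent.
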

\begin{proof}
Computing the following integral
	\begin{equation*}
\int_{\Omega}\left\{(\ref{prob-linear})_2\cdot u_t+(\ref{prob-linear})_1\cdot  q_t+(\ref{prob-linear})_3\cdot \theta_t+(\ref{prob-linear})_4\cdot H_t\right\}dx
\end{equation*}
integrating by parts, one has
\begin{equation*}
\begin{split}	
&\frac{1}{2}\frac{d}{dt}\Big\{\mu\|\mbox{curl}u\|^2+(2\mu+\lambda)\|\mbox{div}u\|^2+\kappa\|\nabla\theta\|^2+\eta\|\mbox{curl}H\|^2\Big\}
-\frac{d}{dt}\int_{\Omega}R(q+\theta)\mbox{div} u dx\\[2mm]
&+\|\sqrt{\rho}u_t\|^2+\|q_t\|^2+c_{v}\|\sqrt{\rho}\theta_t\|+\|H_t\|^2\\[2mm]
&=\int_{\Omega}\Big((R+1) q_{t}+2R\theta_{t}\Big)\mbox{div}u dx
+\int_{\Omega}\mbox{div}(qu) q_{t}dx
+\int_{\Omega}\Big(\rho u\cdot\nabla u-R\nabla (q\theta)+\mbox{curl}H\times H\Big)\cdot u_{t}dx\\[2mm]
&+\int_{\Omega}\Big(-c_{v}\rho u\cdot\nabla \theta+\lambda(\mbox{div}u)^2+2\mu|Su|^2-R(\rho\theta+q)\mbox{div}u+\eta|\mbox{curl}H|^2\Big)\theta_{t}dx
+\int_{\Omega}\mbox{curl}(u\times H)\cdot H_{t}dx\\[2mm]
&\leq \frac{1}{2}\Big(\|\sqrt{\rho}u_t\|^2+\|q_t\|^2+c_{v}\|\sqrt{\rho}\theta_t\|+\|H_t\|^2\Big)+C\|\mbox{div}u\|^2
+\mathcal{E}_1(t) \mathcal{D}^2_1(t),
\end{split}
\end{equation*}
which gives
\begin{equation}\label{above-1}
\begin{split}	
&\frac{1}{2}\frac{d}{dt}\Big\{\mu\|\mbox{curl}u\|^2+(2\mu+\lambda)\|\mbox{div}u\|^2+\kappa\|\nabla\theta\|^2+\eta\|\mbox{curl}H\|^2\Big\}
-\frac{d}{dt}\int_{\Omega}R(q+\theta)\mbox{div} u dx\\[2mm]
&+\frac{1}{2}\Big(\|\sqrt{\rho}u_t\|^2+\|q_t\|^2+c_{v}\|\sqrt{\rho}\theta_t\|+\|H_t\|^2\Big)\\[2mm]
&\leq C\|\mbox{div}u\|^2+\delta_1 \mathcal{D}^2_1(t).
\end{split}
\end{equation}
Then, integrating (\ref{above-1}) over $[0,t]$, then  Lemma \ref{lem-1}, (\ref{u}) and (\ref{H}) implies Lemma \ref{lem-ut}.
\end{proof}

Now, we prove estimates of the first-order  derivatives $\nabla q$.

\begin{lem}\label{2}
\begin{equation}\label{q}
	\|(\nabla q,\mbox{div}u,\nabla\theta)\|^2+c\int^t_0\|(\nabla\mbox{div}u,\Delta\theta)\|^2ds\leq \|(\nabla q,\mbox{div}u,\nabla\theta)\|^2(0)+C\delta_1\int^t_0\mathcal{D}_1^2(s)ds,
\end{equation}	
where $c>0$, $C>0$ are positive constants independent of $t$.	Moreover,
\begin{equation}\label{q-1}
	\int^t_0\|\nabla^2\theta\|^2ds\leq C\mathcal{E}^2_1(0)+C\delta_1\int^t_0\mathcal{D}_1^2(s)ds.
\end{equation}	
\end{lem}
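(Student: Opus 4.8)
The plan is to build a single differential inequality for the quantity
$E_\ast(t):=\frac{R}{2}\|\nabla q\|^2+\frac12\|\mbox{div}u\|^2+\frac{c_v}{2}\|\nabla\theta\|^2$ whose dissipation is exactly $\|(\nabla\mbox{div}u,\Delta\theta)\|^2$, by exploiting a cancellation among the first-order coupling terms. Concretely, I would combine three tested identities: (a) pair $\nabla(\ref{prob-linear})_1$ with $R\nabla q$ to obtain $\frac{R}{2}\frac{d}{dt}\|\nabla q\|^2+R\int_\Omega\nabla q\cdot\nabla\mbox{div}u\,dx=\mathcal{N}_a$; (b) test $(\ref{prob-linear})_2$ against $-\nabla\mbox{div}u$; and (c) test $(\ref{prob-linear})_3$ against $-\Delta\theta$. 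These mirror the structure used in Lemmas \ref{lem-1} and \ref{lem-ut}, but are designed so that the couplings interlock.

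In step (b), writing $\Delta u=\nabla\mbox{div}u-\mbox{curl}^2u$ and integrating by parts, the inertial term $-\int_\Omega\rho u_t\cdot\nabla\mbox{div}u$ produces $\frac12\frac{d}{dt}\|\mbox{div}u\|^2$ up to an $O(q)$ correction, the viscous terms give $(2\mu+\lambda)\|\nabla\mbox{div}u\|^2$, and the pressure terms give $-R\int\nabla q\cdot\nabla\mbox{div}u-R\int\nabla\theta\cdot\nabla\mbox{div}u$. In step (c), $-c_v\int\rho\theta_t\Delta\theta$ gives $\frac{c_v}{2}\frac{d}{dt}\|\nabla\theta\|^2$, the diffusion gives $\kappa\|\Delta\theta\|^2$, and the pressure-work term $-R\int\mbox{div}u\,\Delta\theta$ integrates by parts to $+R\int\nabla\mbox{div}u\cdot\nabla\theta$. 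Summing (a)+(b)+(c), the two pairs $\pm R\int\nabla q\cdot\nabla\mbox{div}u$ and $\pm R\int\nabla\theta\cdot\nabla\mbox{div}u$ cancel, leaving $\frac{d}{dt}E_\ast+(2\mu+\lambda)\|\nabla\mbox{div}u\|^2+\kappa\|\Delta\theta\|^2=\mathcal{N}$, where $\mathcal{N}$ collects every nonlinear and $O(q)$ remainder. Integrating in $t$ and bounding $\int_0^t\mathcal{N}$ by $C\delta_1\int_0^t\mathcal{D}_1^2\,ds$ (via Lemma \ref{sobolev inequ} and $\mathcal{E}_1\le\delta_1$) yields \eqref{q}. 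Then \eqref{q-1} follows from the elliptic bound $\|\nabla^2\theta\|\le C(\|\Delta\theta\|+\|\nabla\theta\|)$ of Lemma \ref{lem-elliptic-3}: squaring, integrating, using \eqref{q} for $\int_0^t\|\Delta\theta\|^2$, and controlling $\int_0^t\|\nabla\theta\|^2$ by the basic estimate of Lemma \ref{lem-1}.

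The delicate point, on which the whole scheme rests, is that every boundary integral produced by these integrations by parts vanishes by virtue of the boundary conditions \eqref{boundary-linear}. The inertial IBP leaves $\int_{\partial\Omega}(\rho u_t\cdot n)\mbox{div}u$, which is zero since $u\cdot n=0$ forces $u_t\cdot n=0$; the term $-\mu\int\mbox{curl}^2u\cdot\nabla\mbox{div}u$ reduces, using $\int_\Omega\mbox{curl}F\cdot\nabla g=\int_{\partial\Omega}(F\times n)\cdot\nabla g$ together with $\mbox{curl}\,\nabla\equiv0$, to $\int_{\partial\Omega}(\mbox{curl}u\times n)\cdot\nabla\mbox{div}u$, which vanishes by $\mbox{curl}u\times n=0$; and the temperature IBPs leave $\int_{\partial\Omega}\theta_t\,\partial_n\theta$ and $\int_{\partial\Omega}\mbox{div}u\,\partial_n\theta$, both killed by $\partial_n\theta=0$ (and its consequence $\partial_n\theta_t=0$). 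I expect the only genuine labor, rather than a real obstruction, to be the bound $\int_0^t\mathcal{N}\le C\delta_1\int_0^t\mathcal{D}_1^2$: one must verify that each remainder stemming from $q=\rho-1$, from $\mbox{div}(qu)$, $u\cdot\nabla u$, $u\cdot\nabla\theta$, the quadratic sources $\lambda(\mbox{div}u)^2+2\mu|S(u)|^2+\eta|\mbox{curl}H|^2$, and $\mbox{curl}H\times H$, carries at least one small factor controlled by $\mathcal{E}_1$ with the rest absorbed into $\mathcal{D}_1$, which is routine given $H^1\hookrightarrow L^6$ and $H^2\hookrightarrow L^\infty$ but is where the bulk of the computation lies.
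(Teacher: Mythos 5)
Your proposal follows essentially the same route as the paper: the paper forms $\int_\Omega\{(\ref{prob-linear})_2\cdot\nabla\mathrm{div}\,u+\nabla(\ref{prob-linear})_1\cdot R\nabla q+\nabla(\ref{prob-linear})_3\cdot\nabla\theta\}\,dx$, exploits exactly the cancellation of the cross terms $\pm R\int\nabla q\cdot\nabla\mathrm{div}\,u$ and $\pm R\int\nabla\theta\cdot\nabla\mathrm{div}\,u$, kills $\int_\Omega\mathrm{curl}^2u\cdot\nabla\mathrm{div}\,u$ via $\mathrm{curl}\,u\times n|_{\partial\Omega}=0$, bounds the remainder by $\mathcal{E}_1\mathcal{D}_1^2$, and obtains \eqref{q-1} from Lemma \ref{lem-elliptic-3} together with Lemma \ref{lem-1}. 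Your only deviation --- testing $(\ref{prob-linear})_3$ against $-\Delta\theta$ instead of pairing its gradient with $\nabla\theta$ --- differs by a single integration by parts whose boundary term vanishes by $\partial_n\theta=0$, so the argument is the same.
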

\begin{proof}
Computing the following integral
\begin{equation*}
\int_{\Omega}\left\{(\ref{prob-linear})_2\cdot\nabla\mbox{div}u+\nabla(\ref{prob-linear})_1\cdot R\nabla q+\nabla(\ref{prob-linear})_3\cdot \nabla\theta \right\} dx,
\end{equation*}
and integrating by parts,  noting that $\int_{\Omega}\mbox{curl}^2u\cdot\nabla \mbox{div}udx=0$ thanks to $\mbox{curl}u\times n|_{\partial\Omega}=0$,
one has
\begin{equation*}
\begin{split}
&\frac{1}{2}\frac{d}{dt}\int_{\Omega} \Big(\rho|\mbox{div}u|^2+R|\nabla q|^2+\rho|\nabla\theta|^2\Big)dx
+(2\mu+\lambda)\|\nabla\mbox{div}u\|^2+\kappa\|\Delta\theta\|^2\\[2mm]
&=\frac{1}{2}\int_{\Omega}q_{t}|\mbox{div}u|^2dx+\frac{c_{v}}{2}\int_{\Omega}q_{t}|\nabla \theta|^2dx-\int_{\Omega}u_t\cdot\nabla q\mbox{div}udx\\[2mm]
&+\int_{\Omega}\Big(\rho u\cdot\nabla u+R\nabla(q\theta)-\mbox{curl}H\times H\Big)\cdot\nabla\mbox{div}udx\\[2mm]
&-\int_{\Omega}\nabla\mbox{div}(qu)\cdot R\nabla qdx-c_{v}\int_{\Omega}\Big(\nabla q(\theta_{t}+u\cdot \nabla\theta)+\rho\nabla(u\cdot\nabla\theta)\Big)\cdot\nabla\theta dx\\[2mm]
&-\int_{\Omega}\nabla\Big(R(\rho\theta+q)\mbox{div}u\Big)\cdot\nabla\theta dx +\int_{\Omega}\nabla\Big(\lambda|\mbox{div}u|^2+2\mu|Su|^2+\eta|\mbox{curl}H|^2\Big)\cdot\nabla\theta dx.
\end{split}
\end{equation*}
The nonlinear linear terms on the right hand-side can be easily controlled by $\mathcal{E}_1(t)\mathcal{D}^2_1(t)$. Therefore, integrating over $[0,t]$ implies the desired estimate (\ref{q}). Furthermore, applying Lemma \ref{lem-elliptic-3} to $\theta$, we get
\begin{equation*}
\begin{split}
\int_0^t\|\nabla^2\theta\|^2ds&\leq C\int_0^t\Big(\|\Delta\theta\|^2+\|\nabla\theta\|^2\Big)ds\\[2mm]
&\leq C\mathcal{E}^2_1(0)+C\delta_1\int_0^t\mathcal{D}_1^2(s)ds,
\end{split}
\end{equation*}
due to Lemma \ref{lem-1}  and (\ref{q}). Thus the proof of Lemma \ref{2} is
completed.
\end{proof}	

In order to obtain the dissipation estimates of $\|\nabla^2 u\|^2$ and $\|\nabla^2 H\|^2$, in view of Lemma  \ref{lem-elliptic-1}, we need to  control $\|\mbox{curl}^2u\|_{L^2_t(L^2)}$ and $\|\mbox{curl}^2H\|_{L^2_t(L^2)}$.

\begin{lem}\label{curl^2u}
	\begin{equation}\label{2H0}
		\|(\sqrt{\rho}\mbox{curl}u,\mbox{curl}H)\|^2+c\int^t_0\|(\mbox{curl}^2u,\mbox{curl}^2H)\|^2ds
\leq C\|(\sqrt{\rho}\mbox{curl}u,\mbox{curl}H)\|^2(0)+C\delta_1\int^t_0\mathcal{D}_1^2(s)ds,
	\end{equation}	
	where $c>0$, $C>0$ are positive constants independent of $t$.	 Moreover,
\begin{equation}\label{2H}
		\int^t_0\Big(\|\nabla ^2u\|^2+\|\nabla^2 H\|^2\Big)ds
\leq C\mathcal{E}^2_1(0)+C\delta_1\int^t_0\mathcal{D}_1^2(s)ds.
	\end{equation}	
\end{lem}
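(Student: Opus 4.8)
The plan is to test the momentum equation $(\ref{prob-linear})_2$ against $\mbox{curl}^2u$ and the magnetic equation $(\ref{prob-linear})_4$ against $\mbox{curl}^2H$, i.e. to compute $\int_{\Omega}\bigl\{(\ref{prob-linear})_2\cdot\mbox{curl}^2u+(\ref{prob-linear})_4\cdot\mbox{curl}^2H\bigr\}\,dx$. The decisive observation is that, for any scalar $f$, integration by parts with the boundary condition $\mbox{curl}u\times n=0$ gives $\int_{\Omega}\nabla f\cdot\mbox{curl}^2u\,dx=\int_{\Omega}\mbox{curl}u\cdot\mbox{curl}\nabla f\,dx+\int_{\partial\Omega}(\mbox{curl}u\times n)\cdot\nabla f\,dS=0$. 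Taking $f=q,\theta,q\theta$ annihilates the three pressure terms $R\nabla q$, $R\nabla\theta$, $R\nabla(q\theta)$; and since $-\mu\Delta u=\mu\,\mbox{curl}^2u-\mu\nabla\mbox{div}u$, the full gradient-of-divergence contribution (of total coefficient $2\mu+\lambda$) is removed by the same identity with $f=\mbox{div}u$, so the principal part collapses to exactly $\mu\|\mbox{curl}^2u\|^2$. For $(\ref{prob-linear})_4$ the same mechanism, combined with $\mbox{div}H=0$ so that $-\Delta H=\mbox{curl}^2H$, yields $\eta\|\mbox{curl}^2H\|^2$.

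For the time-derivative and inertial terms I would integrate by parts once more with $\mbox{curl}u\times n=0$ to get $\int_{\Omega}\rho u_t\cdot\mbox{curl}^2u\,dx=\int_{\Omega}\mbox{curl}u\cdot\mbox{curl}(\rho u_t)\,dx$, and then use the commutator $\mbox{curl}(\rho u_t)=\rho\,\partial_t\mbox{curl}u+\nabla\rho\times u_t$ to isolate $\tfrac12\frac{d}{dt}\int_{\Omega}\rho|\mbox{curl}u|^2\,dx$, the leftover pieces $-\tfrac12\int_\Omega\rho_t|\mbox{curl}u|^2$ and $\int_\Omega\mbox{curl}u\cdot(\nabla\rho\times u_t)$ being of lower order. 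The analogous manipulation for $H$ produces $\tfrac12\frac{d}{dt}\|\mbox{curl}H\|^2$. All remaining contributions — those from $\rho u\cdot\nabla u$, $\mbox{curl}H\times H$, $\mbox{curl}(u\times H)$ and the two commutator pieces — are genuinely nonlinear; each is estimated by Cauchy--Schwarz together with the Sobolev embeddings of Lemma \ref{sobolev inequ}, the quadratic dissipation factors $\|\mbox{curl}^2u\|$ and $\|\mbox{curl}^2H\|$ being absorbed into the left-hand side through Young's inequality and the remaining factors bounded by $C\mathcal{E}_1(t)\mathcal{D}_1^2(t)\le C\delta_1\mathcal{D}_1^2(t)$. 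Collecting these and integrating over $[0,t]$ gives \ef{2H0}.

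For \ef{2H} I would invoke the elliptic (Laplace) estimate of Lemma \ref{lem-elliptic-1}, which applies precisely because $u\cdot n=\mbox{curl}u\times n=0$ and $H\cdot n=\mbox{curl}H\times n=0$ on $\partial\Omega$. Writing $\Delta u=-\mbox{curl}^2u+\nabla\mbox{div}u$ and $\Delta H=-\mbox{curl}^2H$, Lemma \ref{lem-elliptic-1} with $s=0$ yields $\|\nabla^2u\|\le C(\|\mbox{curl}^2u\|+\|\nabla\mbox{div}u\|+\|\nabla u\|)$ and $\|\nabla^2H\|\le C(\|\mbox{curl}^2H\|+\|\nabla H\|)$. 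Squaring and integrating in time, the $\|\mbox{curl}^2u\|^2$ and $\|\mbox{curl}^2H\|^2$ terms are controlled by \ef{2H0}, the $\|\nabla\mbox{div}u\|^2$ term by \ef{q} of Lemma \ref{2}, and the first-order terms by Lemma \ref{lem-1}; this delivers \ef{2H}.

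The main obstacle is the boundary bookkeeping: the entire scheme hinges on the Navier-slip and perfectly conducting conditions $\mbox{curl}u\times n=0$ and $\mbox{curl}H\times n=0$ forcing all boundary integrals generated by the repeated integrations by parts to vanish, so that testing against $\mbox{curl}^2$ simultaneously eliminates the pressure and the $\nabla\mbox{div}u$ coupling and leaves a clean curl-dissipation. One must also be careful that the variable-density commutator $\nabla\rho\times u_t$ and the factor $\rho_t$ are kept strictly as lower-order terms, rather than being allowed to spoil the leading energy identity.
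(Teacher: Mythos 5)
Your proposal is correct and follows essentially the same route as the paper: the paper first applies $\mbox{curl}$ to $(\ref{prob-linear})_2$ and $(\ref{prob-linear})_4$ and tests the resulting equations for $w=\mbox{curl}u$ and $\phi=\mbox{curl}H$ against $w$ and $\phi$, which is equivalent (via one integration by parts using $\mbox{curl}u\times n=\mbox{curl}H\times n=0$) to your direct pairing with $\mbox{curl}^2u$ and $\mbox{curl}^2H$, and produces the same energy identity with dissipation $\mu\|\mbox{curl}^2u\|^2+\eta\|\mbox{curl}^2H\|^2$, the same lower-order commutator terms, and the same absorption of nonlinearities into $C\delta_1\mathcal{D}_1^2$. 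Your derivation of \ef{2H} from Lemma \ref{lem-elliptic-1} together with \ef{2H0}, Lemma \ref{2} (for $\|\nabla\mbox{div}u\|$) and Lemma \ref{lem-1} is exactly the paper's argument.
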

\begin{proof}Let $\mbox{curl}u=w$, then one has the  identity
\begin{equation}\label{identity1}
\mbox{curl}(u\cdot\nabla)u =u\cdot\nabla w-w\cdot\nabla u+w\mbox{div}u.
\end{equation}
Taking operator $\mbox{curl}$ to equation $(\ref{prob-linear})_2$, we easily derive the following equations for $w$ :
\begin{equation}\label{curl-equation-u}
	 \rho(w_{t}+ u\cdot\nabla w)-\mu\Delta w=K+\mbox{curl}(\mbox{curl}H\times H),
\end{equation}
where
\begin{equation*}
	K=-\rho w\cdot\nabla u-\rho w\mbox{div}u-\nabla q\times(u_t+u\cdot\nabla u).
\end{equation*}
Multiplying (\ref{curl-equation-u}) by $w$, noting that $\Delta w=-\mbox{curl curl}w$, integrating by parts with $w\times n|_{\partial \Omega}=0$,  we obtain
\begin{equation}\label{curlw}
\frac{1}{2}\frac{d}{dt}\int_{\Omega}\rho|w|^2dx+\mu\|\mbox{curl}w\|^2
=\int_{\Omega}K\cdot w dx+\int_{\Omega}\mbox{curl}(\mbox{curl}H\times H)\cdot wdx.
\end{equation}
We estimate the right-hand side as follows:
\begin{equation*}
\begin{split}
\int_{\Omega}K\cdot wdx&\leq C\Big(\|w\|_{L^6}\|w\|\|\nabla u\|_{L^3}+\|\nabla q\|_{L^3}\|u_{t}\|_{L^6}\|w\|
+\|\nabla q\|_{L^6}\|w\|_{L^6}\|u\|_{L^6}\|\nabla u\|\Big)\\[2mm]
&\leq  \frac{\mu}{4}\|\mbox{curl}w\|^2+C\delta_1 \Big(\|\nabla u_{t}\|^2+\|\nabla u\|^2\Big),
\end{split}
\end{equation*}
where we have used the estimate $\|w\|_{L^6}\leq C\|\nabla w\|^2\leq C(\|\mbox{curl}w\|^2+\|w\|^2)$ thanks to Lemma \ref{lem-div-curl} with $w\times n|_{\partial\Omega}=0$.
Integrating by parts with the boundary condition $w\times n|_{\partial\Omega}=0$ again implies
\begin{equation*}
\begin{split}
\int_{\Omega}\mbox{curl}(\mbox{curl}H\times H)\cdot wdx&=\int_{\Omega}(\mbox{curl}H\times H)\cdot \mbox{curl} wdx\\[2mm]
&\leq \|H\|_{L^{\infty}}\|\mbox{curl}H\|\|\mbox{curl}w\|\\[2mm]
&\leq \frac{\mu}{4}\|\mbox{curl}w\|^2+C\delta_1 \|\mbox{curl}H\|^2.
\end{split}
\end{equation*}
Putting these into (\ref{curlw}) yields
\begin{equation}\label{2-2}
\frac{1}{2}\frac{d}{dt}\int_{\Omega}\rho|w|^2dx+\frac{\mu}{2}\|\mbox{curl}w\|^2
\leq C\delta_1 \Big(\|\nabla u_{t}\|^2+\|\nabla u\|^2+\|\nabla H\|^2\Big).
\end{equation}
Similarly, letting  $\mbox{curl}H=\phi$,   taking $\mbox{curl}$ to equation $(\ref{prob-linear})_{4}$, we obtain
\begin{equation}\label{curl-equation-H}
		\phi_{t}+ \eta\mbox{curl}^2\phi=\mbox{curl}^2(u\times H).
	\end{equation}
 Multiplying (\ref{curl-equation-H}) by $\phi$, integrating by parts with $\phi\times n|_{\partial\Omega}= 0$,  one has
\begin{equation*}
\begin{split}
		\frac{1}{2}\frac{d}{dt}\int_{\Omega}|\phi|^2dx+ \eta\|\mbox{curl}\phi\|^2&=\int_{\Omega}\mbox{curl}^2(u\times H)\cdot \phi dx\\[2mm]
         &=\int_{\Omega}\mbox{curl}(u\times H)\cdot \mbox{curl}\phi dx\\[2mm]
         &\leq \frac{\eta}{2}\|\mbox{curl}\phi\|^2+C\|\mbox{curl}(u\times H)\|^2\\[2mm]
         &\leq\frac{\eta}{2}\|\mbox{curl}\phi\|^2+C\delta_1\Big(\|\nabla u\|^2+\|\nabla H\|^2\Big).
         \end{split}
	\end{equation*}
Therefore,
\begin{equation}\label{2-3}
\frac{1}{2}\frac{d}{dt}\int_{\Omega}|\phi|^2dx+ \frac{\eta}{2}\|\mbox{curl}\phi\|^2\leq C\delta_1\Big(\|\nabla u\|^2+\|\nabla H\|^2\Big).
        \end{equation}
Summing up (\ref{2-2}) and (\ref{2-3}),  integrating over $[0,t]$ gives (\ref{2H0}).

Furthermore, applying Lemma \ref{lem-elliptic-1} with $v=H$, and noting that $\Delta H=-\mbox{curl}^2 H$, we get
     \begin{equation*}
\begin{split}
		\int_0^t\|\nabla^2H\|^2 ds&\leq C\int_0^t\Big(\|\Delta H\|^2+\|\nabla H\|^2\Big)ds\\[2mm]
&\leq C\int_0^t\Big(\|\mbox{curl}^2 H\|^2+\|\nabla H\|^2\Big)ds\\[2mm]
&\leq C\mathcal{E}^2_1(0)+C\delta_1\int^t_0\mathcal{D}_1^2(s)ds,
         \end{split}
	\end{equation*}
thanks to the basic energy estimate Lemma \ref {lem-1} and (\ref{2H0}).

Similarly, applying Lemma \ref{lem-elliptic-1} with $v=u$, we obtain
     \begin{equation*}
\begin{split}
		\int_0^t\|\nabla^2u\|^2ds&\leq C\int_0^t\Big(\|\Delta u\|^2+\|\nabla u\|^2\Big)ds\\[2mm]
&\leq C\int_0^t\Big(\|\mbox{curl}^2 u\|^2+\|\nabla \mbox{div}u\|^2+\|\nabla u\|^2\Big)ds\\[2mm]
&\leq C\mathcal{E}^2_1(0)+C\delta_1\int^t_0\mathcal{D}_1^2(s)ds,
         \end{split}
	\end{equation*}
thanks to Lemma \ref {lem-1}, Lemma \ref{2} and (\ref{2H0}).
        \end{proof}

Next, we prove  estimates of the first-order temporal derivatives $(q_t,u_t,\theta_t, H_t)$.
\begin{lem}\label{6-1}
\begin{equation*}
	\|(q_t,u_t,\theta_t,H_t)\|^2+c\int^t_0\|(\nabla u_t,\nabla\theta_t,\nabla H_t)\|^2ds\leq C\|(q_t,u_t,\theta_t,H_t)(0)\|^2+ C\delta_1\int^t_0\mathcal{D}_1^2(s)ds,
\end{equation*}	
where $c>0$, $C>0$ are positive constants independent of $t$.	
\end{lem}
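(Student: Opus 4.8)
The plan is to differentiate the whole system $(\ref{prob-linear})$ once in time and run a weighted energy estimate on $(q_t,u_t,\theta_t,H_t)$, in the spirit of the basic estimate of Lemma \ref{lem-1}. Since $\partial\Omega$ does not move in time, differentiating the boundary conditions $(\ref{boundary-linear})$ shows that
\[
u_t\cdot n=0,\quad \mbox{curl}\,u_t\times n=0,\quad H_t\cdot n=0,\quad \mbox{curl}\,H_t\times n=0,\quad \frac{\partial\theta_t}{\partial n}=0\quad\text{on }\partial\Omega,
\]
and these inherited conditions are precisely what makes the subsequent integrations by parts free of boundary terms. Concretely, I would compute
\[
\int_\Omega\Big\{R\,\partial_t(\ref{prob-linear})_1\,q_t+\partial_t(\ref{prob-linear})_2\cdot u_t+\partial_t(\ref{prob-linear})_3\,\theta_t+\partial_t(\ref{prob-linear})_4\cdot H_t\Big\}\,dx .
\]

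After integrating by parts, the second-order operators produce (using $\Delta=-\mbox{curl}^2+\nabla\mbox{div}$ and $\Delta H_t=-\mbox{curl}^2H_t$, the latter because $\mbox{div}\,H_t=0$) the dissipation $\mu\|\mbox{curl}\,u_t\|^2+(2\mu+\lambda)\|\mbox{div}\,u_t\|^2+\kappa\|\nabla\theta_t\|^2+\eta\|\mbox{curl}\,H_t\|^2$, whose coefficients are positive since $\mu>0$ and $2\mu+\lambda>0$. The second-time-derivative contributions $\rho u_{tt}\cdot u_t$, $c_v\rho\theta_{tt}\theta_t$, $q_{tt}q_t$, $H_{tt}\cdot H_t$ assemble into $\tfrac12\frac{d}{dt}\int_\Omega(R|q_t|^2+\rho|u_t|^2+c_v\rho|\theta_t|^2+|H_t|^2)\,dx$ up to the cubic commutator $\tfrac12\int_\Omega q_t(|u_t|^2+c_v|\theta_t|^2)\,dx$ coming from $\rho_t=q_t$. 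The crucial point, as in Lemma \ref{lem-1}, is that the pressure--velocity coupling cancels exactly: $\int_\Omega R\nabla q_t\cdot u_t\,dx=-R\int_\Omega q_t\,\mbox{div}\,u_t\,dx$ and $\int_\Omega R\nabla\theta_t\cdot u_t\,dx=-R\int_\Omega \theta_t\,\mbox{div}\,u_t\,dx$ are annihilated by the terms $R\int_\Omega\mbox{div}\,u_t\,q_t\,dx$ and $R\int_\Omega\mbox{div}\,u_t\,\theta_t\,dx$ produced by the weighted continuity and temperature equations. Unlike Lemma \ref{lem-ut}, no residual $\|\mbox{div}\,u\|^2$ is generated here.

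It then remains to control the right-hand side, namely the time-differentiated nonlinearities: $\mbox{div}(q_tu+qu_t)$ tested against $q_t$; $\rho_t(u_t+u\cdot\nabla u)$, $\rho(u_t\cdot\nabla u+u\cdot\nabla u_t)$, $R\nabla(q\theta)_t$ and $(\mbox{curl}\,H\times H)_t$ tested against $u_t$; the differentiated heat sources against $\theta_t$; and $\mbox{curl}(u\times H)_t$ against $H_t$. Each is a product of a low-order factor carrying the smallness $\mathcal{E}_1(t)\le\delta_1$ and higher-order factors dominated by $\mathcal{D}_1$, so I would bound them all by $\mathcal{E}_1(t)\mathcal{D}_1^2(t)\le\delta_1\mathcal{D}_1^2(t)$ via H\"older, the Sobolev embeddings of Lemma \ref{sobolev inequ}, and interpolation (for instance $\int_\Omega q_t|u_t|^2\,dx\le\|q_t\|_{L^3}\|u_t\|\,\|u_t\|_{L^6}\le C\|q_t\|_1\|u_t\|\,\|\nabla u_t\|$), using Young's inequality to reabsorb the top-order factors $\|\nabla u_t\|$, $\|\nabla\theta_t\|$, $\|\nabla H_t\|$ into a small fraction of the dissipation. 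Finally, Proposition \ref{prop-imp} upgrades $\|\mbox{curl}\,u_t\|^2+\|\mbox{div}\,u_t\|^2$ and $\|\mbox{curl}\,H_t\|^2$ to $\|\nabla u_t\|^2$ and $\|\nabla H_t\|^2$ (its hypotheses $u_t\cdot n=0$, $H_t\cdot n=0$, $\mbox{div}\,H_t=0$ hold). Integrating the resulting differential inequality over $[0,t]$, and using that $\rho=1+q$ stays uniformly close to $1$ so that the weighted energy is equivalent to $\|(q_t,u_t,\theta_t,H_t)\|^2$, yields the stated estimate.

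The main obstacle is the third step: the careful bookkeeping of the time-differentiated nonlinearities in three dimensions, that is, distributing $L^2$, $L^3$, $L^6$ and $L^\infty$ norms so that exactly one factor absorbs the smallness $\delta_1$ while the remaining factors are controlled by $\mathcal{D}_1$. The terms $\rho\,u\cdot\nabla u_t$ and $(\mbox{curl}\,H\times H)_t$ are the delicate ones, since they threaten to cost a full power of the dissipative norms $\|\nabla u_t\|$ and $\|\nabla H_t\|$, and must be split so that only an arbitrarily small multiple of the dissipation is consumed.
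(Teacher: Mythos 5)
Your proposal is correct and coincides with the paper's own (omitted) argument: the authors likewise test the time-differentiated system against $(Rq_t,u_t,\theta_t,H_t)$, use that the boundary conditions are preserved under $\partial_t$, and close the estimate exactly as in the basic energy estimate of Lemma \ref{lem-1}, with Proposition \ref{prop-imp} upgrading the $\mathrm{div}$/$\mathrm{curl}$ dissipation to full gradients. Your write-up in fact supplies the details (exact cancellation of the pressure--velocity coupling, the $L^2$--$L^3$--$L^6$ bookkeeping of the time-differentiated nonlinearities) that the paper dismisses as "easy" and omits.
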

\begin{proof}
Computing the following integral
\begin{equation*}
\int_{\Omega}\left\{\partial_{t}(\ref{prob-linear})_1Rq_{t}+\partial_{t}(\ref{prob-linear})_2\cdot u_{t}+\partial_{t}(\ref{prob-linear})_3\theta_{t}+\partial_{t}(\ref{prob-linear})_4\cdot H_{t}\right\} dx,
\end{equation*}
and noting that  differentiation of the system (\ref{prob-linear}) with respect to $t$ will keep the boundary conditions (\ref{boundary-linear}),
it is easy to  the prove Lemma \ref{6-1}, we omit it.
\end{proof}

\subsection{Estimates for the second-order derivatives}

Firstly, we prove  estimates of the second-order derivatives $(\nabla u_t,\nabla\theta_t, \nabla H_t)$.
\begin{lem}\label{lem-utt}
	\begin{equation*}
		\|(\nabla u_t,\nabla\theta_t,\nabla H_t)\|^2+c\int^t_0\|(u_{tt},q_{tt},\theta_{tt},H_{tt})\|^2ds\\[2mm]
		\leq C\mathcal{E}^2_1(0)+C\delta_1\int^t_0\mathcal{D}_1^2(s)ds,
	\end{equation*}	
	 where $c>0$, $C>0$ are positive constants independent of $t$.	
\end{lem}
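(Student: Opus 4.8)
The plan is to run a time-differentiated energy estimate that mirrors the proof of Lemma~\ref{lem-ut}, but one order higher in the temporal variable. Concretely, I would differentiate each equation of \eqref{prob-linear} once in $t$ and then compute
\begin{equation*}
\int_{\Omega}\Big\{\partial_t(\ref{prob-linear})_2\cdot u_{tt}+R\,\partial_t(\ref{prob-linear})_1\, q_{tt}+\partial_t(\ref{prob-linear})_3\,\theta_{tt}+\partial_t(\ref{prob-linear})_4\cdot H_{tt}\Big\}dx.
\end{equation*}
Since $\partial_t$ preserves the boundary conditions \eqref{boundary-linear} (as already noted in the proof of Lemma~\ref{6-1}), one has $\mbox{curl}u_t\times n=0$, $\mbox{curl}H_t\times n=0$, $u_t\cdot n=0$, $H_t\cdot n=0$ and $\frac{\partial\theta_t}{\partial n}=0$ on $\partial\Omega$, so that every integration by parts below produces no boundary contribution.

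Using $\Delta=-\mbox{curl}^2+\nabla\mbox{div}$ and integrating by parts, the principal linear terms arising from $-\mu\Delta u_t$, $-(\mu+\lambda)\nabla\mbox{div}u_t$, $-\kappa\Delta\theta_t$ and $-\eta\Delta H_t$ should assemble into the exact time derivative $\frac{1}{2}\frac{d}{dt}\big\{\mu\|\mbox{curl}u_t\|^2+(2\mu+\lambda)\|\mbox{div}u_t\|^2+\kappa\|\nabla\theta_t\|^2+\eta\|\mbox{curl}H_t\|^2\big\}$, while the weighted reaction terms yield the dissipation $\|\sqrt{\rho}u_{tt}\|^2+\|q_{tt}\|^2+c_v\|\sqrt{\rho}\theta_{tt}\|^2+\|H_{tt}\|^2$ that I want on the left. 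Exactly as in \eqref{above-1}, the pressure couplings $R\nabla q_t\cdot u_{tt}$ and $R\nabla\theta_t\cdot u_{tt}$ are integrated by parts in space and recombined with the differentiated continuity and temperature equations to produce a term $-\frac{d}{dt}\int_\Omega R(q_t+\theta_t)\mbox{div}u_t\,dx$ together with an integrand controlled by $\epsilon\|(q_{tt},\theta_{tt})\|^2+C\|\mbox{div}u_t\|^2$.

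All remaining terms are nonlinear and carry at least one undifferentiated or once-differentiated solution factor; the strategy is to peel off one power of the smallness $\mathcal{E}_1(t)\le\delta_1$ from each via H\"older's inequality and the Sobolev embeddings of Lemma~\ref{sobolev inequ}, bounding the surviving factors by $\mathcal{D}_1(t)$. For example $\int_\Omega q_t\,u_t\cdot u_{tt}\,dx\le\|q_t\|_{L^6}\|u_t\|_{L^3}\|u_{tt}\|\le C\delta_1\mathcal{D}_1^2(t)$, and analogously for $\partial_t(\rho u\cdot\nabla u)$, $\partial_t(\mbox{curl}H\times H)$, the differentiated viscous/Joule source $\partial_t(\lambda(\mbox{div}u)^2+2\mu|S(u)|^2+\eta|\mbox{curl}H|^2)$ and the convective temperature terms. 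After absorbing the $\epsilon$-fractions of $\|(u_{tt},q_{tt},\theta_{tt},H_{tt})\|^2$ into the left-hand dissipation by Young's inequality, I expect a differential inequality of the form
\begin{equation*}
\tfrac{1}{2}\tfrac{d}{dt}\Big\{\mu\|\mbox{curl}u_t\|^2+(2\mu+\lambda)\|\mbox{div}u_t\|^2+\kappa\|\nabla\theta_t\|^2+\eta\|\mbox{curl}H_t\|^2\Big\}-\tfrac{d}{dt}\!\int_\Omega\! R(q_t+\theta_t)\mbox{div}u_t\,dx+c\|(u_{tt},q_{tt},\theta_{tt},H_{tt})\|^2\le C\|\nabla u_t\|^2+C\delta_1\mathcal{D}_1^2(t).
\end{equation*}

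Finally I would integrate this over $[0,t]$. The cross term $\int_\Omega R(q_t+\theta_t)\mbox{div}u_t\,dx$ at time $t$ is absorbed by Young's inequality into $\epsilon\|\nabla u_t\|^2(t)$ plus $C(\|q_t\|^2+\|\theta_t\|^2)(t)$, the latter controlled by Lemma~\ref{6-1}; the non-small term $\int_0^t\|\nabla u_t\|^2\,ds$ is likewise controlled by Lemma~\ref{6-1}, precisely as $\int_0^t\|\mbox{div}u\|^2\,ds$ was controlled by Lemma~\ref{lem-1} in the proof of Lemma~\ref{lem-ut}. To turn the curl/div norms of $u_t$ and $H_t$ on the left into the full gradient norms $\|\nabla u_t\|^2$ and $\|\nabla H_t\|^2$ claimed in the statement, I would invoke Proposition~\ref{prop-imp} (applicable since $u_t\cdot n=H_t\cdot n=0$ and the first Betti number of $\Omega$ vanishes). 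The hard part will be the bookkeeping of the time-differentiated nonlinearities: one must check that each product genuinely leaves one small factor and that no term demands a gradient of $q_{tt}$ or a spatial derivative of a second time-derivative that is not a component of $\mathcal{D}_1(t)$. The most delicate are $\partial_t(\rho u\cdot\nabla u)\cdot u_{tt}$ and the differentiated Lorentz and Joule terms, where the magnetic factors should be placed in $L^\infty$ through the $H^3$ bound on $H$ so that the remaining factor lands exactly in $\mathcal{D}_1(t)$.
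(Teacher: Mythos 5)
Your proposal follows essentially the same route as the paper: the paper computes exactly the integral $\int_{\Omega}\{\partial_t(\ref{prob-linear})_2\cdot u_{tt}+\partial_t(\ref{prob-linear})_1 q_{tt}+\partial_t(\ref{prob-linear})_3\theta_{tt}+\partial_t(\ref{prob-linear})_4\cdot H_{tt}\}dx$, obtains the same exact time derivative $\frac{1}{2}\frac{d}{dt}\{\mu\|\mbox{curl}u_t\|^2+(2\mu+\lambda)\|\mbox{div}u_t\|^2+\kappa\|\nabla\theta_t\|^2+\eta\|\mbox{curl}H_t\|^2\}-\frac{d}{dt}\int_\Omega R(q_t+\theta_t)\mbox{div}u_t\,dx$ together with the dissipation $\|\sqrt{\rho}u_{tt}\|^2+\|q_{tt}\|^2+c_v\|\sqrt{\rho}\theta_{tt}\|^2+\|H_{tt}\|^2$, bounds the nonlinearities by $C\mathcal{E}_1(t)\mathcal{D}_1^2(t)$ plus a harmless $C\|\mbox{div}u_t\|^2$ controlled via Lemma \ref{6-1}, and then invokes Proposition \ref{prop-imp} on $u_t$ and $H_t$ to recover the full gradients. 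Your write-up is a correct and faithful reconstruction of that argument.
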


\begin{proof}
	Computing the following integral
	\begin{equation*}
\int_{\Omega}\left\{\partial_t(\ref{prob-linear})_2\cdot u_{tt}+\partial_t(\ref{prob-linear})_1\cdot  q_{tt}+\partial_t(\ref{prob-linear})_3\cdot \theta_{tt}+\partial_t(\ref{prob-linear})_4\cdot H_{tt}\right\}dx
\end{equation*}
integrating by parts, one has
\begin{equation}\label{above-2}
\begin{split}	
&\frac{1}{2}\frac{d}{dt}\Big\{\mu\|\mbox{curl}u_t\|^2+(2\mu+\lambda)\|\mbox{div}u_t\|^2+\kappa\|\nabla\theta_t\|^2+\eta\|\mbox{curl}H_t\|^2\Big\}
-\frac{d}{dt}\int_{\Omega}R(q_t+\theta_t)\mbox{div} u_t dx\\[2mm]
&+\|\sqrt{\rho}u_{tt}\|^2+\|q_{tt}\|^2+c_{v}\|\sqrt{\rho}\theta_{tt}\|^2+\|H_{tt}\|^2\\[2mm]
&=\int_{\Omega}\Big((R+1) q_{tt}+2R\theta_{tt}\Big)\mbox{div}u_t dx
+\int_{\Omega}\mbox{div}(qu)_t q_{tt} dx\\[2mm]
&+\int_{\Omega}\Big(-q_t(u_t+u\cdot\nabla u)+\rho (u\cdot\nabla u)_{t}-R\nabla (q\theta)_{t}+(\mbox{curl}H\times H)_{t}\Big)\cdot u_{tt}dx\\[2mm]
&+\int_{\Omega}\Big(-q_t(\theta_t+u\cdot\nabla \theta)-c_{v}\rho (u\cdot\nabla \theta)_{t}\Big) \theta_{tt}dx\\[2mm] &+\int_{\Omega}\Big(\lambda(\mbox{div}u)^2+2\mu|Su|^2-R(\rho\theta+q)\mbox{div}u+\eta|\mbox{curl}H|^2\Big)_{t}\theta_{tt}dx
+\int_{\Omega}\mbox{curl}(u\times H)_{t}\cdot H_{tt}dx\\[2mm]
&\leq \frac{1}{2}\Big(\|\sqrt{\rho}u_{tt}\|^2+\|q_{tt}\|^2+c_{v}\|\sqrt{\rho}\theta_{tt}\|+\|H_{tt}\|^2\Big)+C\|\mbox{div}u_{t}\|^2
+C\mathcal{E}_1(t) \mathcal{D}^2_1(t).
\end{split}
\end{equation}
On the other hand, applying Proposition \ref{prop-imp} to { $u_t$ and $H_t$ with $u_t\cdot n |_{\partial\Omega}=0$, $H_t\cdot n|_{\partial\Omega}=0$}, it holds
 \begin{equation}\label{above}
 \|\nabla u_{t}\|^2\leq C\big(\|\mbox{curl}u_t\|^2+\|\mbox{div}u_t\|^2\big), \quad\mbox{and}
 \quad\|\nabla H_{t}\|^2\leq C\big\|\mbox{curl}H_t\|^2.
 \end{equation}
 Then, integrating (\ref{above-2}) over $[0,t]$,  recalling Lemma \ref{6-1}, and using (\ref{above})
 we prove Lemma \ref{lem-utt}.
\end{proof}

Next, we prove  estimates of the second-order derivatives $\nabla q_t$.
\begin{lem}\label{lem-uxxt}
	\begin{equation}\label{6-1-0}
		\|(\nabla q_t,\sqrt{\rho}\mbox{div}u_t,\sqrt{\rho}\nabla\theta_t)\|^2+c\int^t_0\|(\nabla\mbox{div}u_t,\Delta\theta_t)\|^2ds\leq \|(\nabla q_t,\sqrt{\rho}\mbox{div}u_t,\sqrt{\rho}\nabla\theta_t)\|^2(0)+C\delta_1\int^t_0\mathcal{D}_1^2(s)ds,
	\end{equation}	
	where $c>0$, $C>0$ are positive constants independent of $t$.	 Moreover,
\begin{equation}\label{2theta_t}
	\int^t_0\|\nabla^2\theta_t\|^2ds\leq C\mathcal{E}_1^2(0)+C\delta_1\int^t_0\mathcal{D}_1^2(s)ds.
	\end{equation}
\end{lem}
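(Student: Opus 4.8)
The plan is to carry out the exact temporal-derivative analogue of the proof of Lemma \ref{2}. First I differentiate the system (\ref{prob-linear}) once in $t$; since $\partial_t$ commutes with the time-independent boundary operators, the differentiated unknowns $(q_t,u_t,\theta_t,H_t)$ still satisfy (\ref{boundary-linear}), in particular $u_t\cdot n=0$, $\mbox{curl}u_t\times n=0$ and $\partial_n\theta_t=0$ on $\partial\Omega$. I then form the combination
\begin{equation*}
\int_{\Omega}\left\{\partial_t(\ref{prob-linear})_2\cdot\nabla\mbox{div}u_t+\nabla\partial_t(\ref{prob-linear})_1\cdot R\nabla q_t+\nabla\partial_t(\ref{prob-linear})_3\cdot\nabla\theta_t\right\}dx,
\end{equation*}
which is the $t$-differentiated version of the test used in Lemma \ref{2}.

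Second, I integrate by parts. Writing $\Delta u_t=\nabla\mbox{div}u_t-\mbox{curl}^2u_t$ and using $\mbox{curl}u_t\times n|_{\partial\Omega}=0$ to kill $\int_\Omega\mbox{curl}^2u_t\cdot\nabla\mbox{div}u_t\,dx$, the principal part of $\partial_t(\ref{prob-linear})_2$ produces the dissipation $(2\mu+\lambda)\|\nabla\mbox{div}u_t\|^2$; likewise $-\kappa\nabla\Delta\theta_t\cdot\nabla\theta_t$ integrates, via $\partial_n\theta_t|_{\partial\Omega}=0$, to $\kappa\|\Delta\theta_t\|^2$. The leading time-derivative parts (e.g. $\int\rho u_{tt}\cdot\nabla\mbox{div}u_t$ after one integration by parts) yield $\frac12\frac{d}{dt}\int_\Omega(\rho|\mbox{div}u_t|^2+R|\nabla q_t|^2+c_v\rho|\nabla\theta_t|^2)\,dx$, exactly as in Lemma \ref{2}. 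Crucially, the pressure cross terms $R\nabla q_t\cdot\nabla\mbox{div}u_t$ and $R\nabla\theta_t\cdot\nabla\mbox{div}u_t$ from $\partial_t(\ref{prob-linear})_2$ cancel against the corresponding terms generated by $\nabla\partial_t(\ref{prob-linear})_1$ and $\nabla\partial_t(\ref{prob-linear})_3$, precisely as in the non-differentiated estimate.

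Third, I estimate the remainder, integrate in time, and use $\mathcal{E}_1\le\delta_1$ together with $\rho=1+q\approx1$ to bound every remaining term by $C\mathcal{E}_1(t)\mathcal{D}_1^2(t)\le C\delta_1\mathcal{D}_1^2(t)$, which gives (\ref{6-1-0}). This is where the main obstacle lies: differentiating the transport and source terms in $t$ generates the commutators $(u\cdot\nabla u)_t$, $(u\cdot\nabla\theta)_t$, $\nabla(q\theta)_t$, $(\mbox{curl}H\times H)_t$, the density-derivative contributions $q_t=\rho_t$ multiplying second-order quantities, and pieces carrying $q_{tt},u_{tt},\theta_{tt}$ (for instance $-\int\nabla q\cdot u_{tt}\,\mbox{div}u_t$). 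One must distribute derivatives so that no single factor carries two top-order derivatives ($\nabla\mbox{div}u_t$, $\Delta\theta_t$ or $q_{tt}$), integrating by parts where necessary, and then close each term using the Sobolev embeddings of Lemma \ref{sobolev inequ} and the definitions (\ref{E3})--(\ref{D3}); the genuinely top-order pieces are absorbed into $(2\mu+\lambda)\|\nabla\mbox{div}u_t\|^2+\kappa\|\Delta\theta_t\|^2$ by Young's inequality. Since $\|q_{tt}\|,\|u_{tt}\|,\|\theta_{tt}\|$ lie in $\mathcal{D}_1$ but not in $\mathcal{E}_1$, each such factor must always be paired with a factor carrying the smallness $\mathcal{E}_1$ (e.g. $\|\nabla q\|_{L^3}\le C\mathcal{E}_1$).

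Finally, for (\ref{2theta_t}) I apply the elliptic regularity estimate Lemma \ref{lem-elliptic-3} to $\theta_t$, which is legitimate since $\partial_n\theta_t|_{\partial\Omega}=0$, obtaining $\|\nabla^2\theta_t\|\le C(\|\Delta\theta_t\|+\|\nabla\theta_t\|)$. Integrating in $t$ and invoking (\ref{6-1-0}) to control $\int_0^t\|\Delta\theta_t\|^2\,ds$ and Lemma \ref{6-1} to control $\int_0^t\|\nabla\theta_t\|^2\,ds$ then yields $\int_0^t\|\nabla^2\theta_t\|^2\,ds\le C\mathcal{E}_1^2(0)+C\delta_1\int_0^t\mathcal{D}_1^2\,ds$, completing the proof.
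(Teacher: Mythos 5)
Your proposal follows essentially the same route as the paper: test $\partial_t(\ref{prob-linear})_2$ against $\nabla\mbox{div}u_t$, the $t$-differentiated continuity equation (after a further $\nabla$) against $R\nabla q_t$, and the $t$-differentiated heat equation so as to produce $\kappa\|\Delta\theta_t\|^2$, let the pressure cross terms cancel upon summation, handle the $\nabla^2 q_t$-type transport term by the antisymmetric integration by parts with $u\cdot n|_{\partial\Omega}=0$, and finish (\ref{2theta_t}) with Lemma \ref{lem-elliptic-3}. The only (immaterial) difference is that the paper tests the heat equation directly with $\Delta\theta_t$ rather than with $\nabla(\cdot)\cdot\nabla\theta_t$, which coincide after one integration by parts using $\partial_n\theta_t|_{\partial\Omega}=0$.
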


\begin{proof}
	Taking $\partial_{t}$ to $(\ref{prob-linear})_2$, multiplying the resulted identity by $\nabla\mbox{div}u_t$, and  integrating  over $\Omega$, one has
\begin{equation*}
\begin{split}
		&\frac{1}{2}\frac{d}{dt}\int_{\Omega}\rho|\mbox{div}u_t|^2dx+(2\mu+\lambda)\|\nabla\mbox{div}u_t\|^2-R\int_{\Omega}(\nabla q_t+\nabla\theta_t)\cdot\nabla\mbox{div}u_tdx\\[2mm]
&=\frac{1}{2}\int_{\Omega}q_t|\mbox{div}u_t|^2dx+\int_{\Omega}u_{tt}\cdot\nabla q\mbox{div}u_tdx\\[2mm]
&+\int_{\Omega}\Big(\rho (u\cdot\nabla u)_t+q_t(u_t+u\cdot\nabla u)+R\nabla(q\theta)_t-(\mbox{curl}H\times H)_{t}\Big)\cdot \nabla \mbox{div}u_tdx\\[2mm]
&\leq \|q_t\|_{L^{\infty}}\|\mbox{div}u_t\|^2+\|u_{tt}\|\|\nabla q\|_{L^3}\|\mbox{div}u_t\|_{L^6}\\[2mm]
&+\Big(\|(u,q,\theta,H\|_2+\|q_t\|_{1}+\|u\|_{2}\|q_t\|_{1}\Big)\cdot\Big(\|\nabla\mbox{div}u_t\|^2+\|\nabla u_t\|^2+\|\nabla^2u\|^2+
\|\nabla q_t\|^2+\|\nabla \theta_t\|^2+\|\nabla H_t\|^2\Big),
\end{split}
	\end{equation*}
which yields
\begin{equation}\label{new-1}
\begin{split}
		&\frac{1}{2}\frac{d}{dt}\int_{\Omega}\rho|\mbox{div}u_t|^2dx+\frac{2\mu+\lambda}{2}\|\nabla\mbox{div}u_t\|^2-R\int_{\Omega}(\nabla q_t+\nabla\theta_t)\cdot\nabla\mbox{div}u_tdx\\[2mm]
&\leq \delta_1\Big(\|\mbox{div}u_t\|^2+\|u_{tt}\|^2+
\|\nabla\mbox{div}u_t\|^2+\|\nabla u_t\|^2+\|\nabla^2u\|^2+
\|\nabla q_t\|^2+\|\nabla \theta_t\|^2+\|\nabla H_t\|^2\Big).
\end{split}
	\end{equation}
 To eliminate the singular terms on the left-hand side of (\ref{new-1}), we take $\partial_{t}$ to $(\ref{prob-linear})_3$, multiply the resulted identity by $\Delta \theta_t$,
and integrate over $\Omega$, then it holds
\begin{equation*}
\begin{split}
		&\frac{c_{v}}{2}\frac{d}{dt}\int_{\Omega}\rho|\nabla \theta_t|^2dx+\kappa\|\Delta\theta_t\|^2
+R\int_{\Omega}\nabla \theta_t\cdot\nabla\mbox{div}u_tdx\\[2mm]
&=\frac{c_v}{2}\int_{\Omega}q_t|\nabla\theta_t|^2dx+c_{v}\int_{\Omega}\theta_{tt}\nabla q\cdot \nabla\theta_tdx
+\int_{\Omega}\Big(\rho(u\cdot\nabla\theta)_t+q_t(\theta_t+u\cdot\nabla\theta)\Big)\Delta\theta_{t}dx\\[2mm]
&-\int_{\Omega}\partial_{t}\Big(\lambda(\mbox{div}u)^2+2\mu |S(u)|^2-R(\rho\theta+q)\mbox{div}u+\eta|\mbox{curl}H|^2\Big)\Delta\theta_{t}dx\\[2mm]
&\leq \|q_t\|_{L^3}\|\nabla\theta_t\|_{L^6}\|\nabla\theta_t\|+\|\nabla q\|_{L^3}\|\nabla\theta_{t}\|_{L^6}\|\theta_{tt}\|\\[2mm]
&+\Big(\|(u,\theta, H)\|_{2}+\|(q,q_t)\|_{1}\Big)\cdot\Big(\|\Delta\theta_{t}\|^2+\|\nabla u_t\|^2+\|\nabla \theta_t\|^2+\|\nabla q_t\|^2+\|\nabla\mbox{div} u_t\|^2+\|\nabla q_t\|^2+\|\nabla^2H_t\|^2\Big).
\end{split}
	\end{equation*}
Therefore, by using the elliptic estimate $\|\nabla^2\theta_t\|^2\leq C\Big( \|\Delta\theta_t\|^2+\|\nabla\theta_t\|^2\Big)$,  one has
\begin{equation}\label{new-2}
\begin{split}
		&\frac{c_{v}}{2}\frac{d}{dt}\int_{\Omega}\rho|\nabla \theta_t|^2dx+\frac{\kappa}{2}\|\Delta\theta_t\|^2
+R\int_{\Omega}\nabla \theta_t\cdot\nabla\mbox{div}u_tdx\\[2mm]
&\leq
C\delta_1\Big(\|\theta_{tt}\|^2+\|\nabla u_t\|^2+\|\nabla \theta_t\|^2+\|\nabla q_t\|^2+\|\nabla\mbox{div} u_t\|^2+\|\nabla q_t\|^2+\|\nabla^2H_t\|^2\Big).
\end{split}
	\end{equation}
Similarly, taking $\partial_{t}\nabla$ to $(\ref{prob-linear})_1$,  multiplying the resulted identity by $R\nabla q_t$,
and integrating over $\Omega$, we get
\begin{equation}\label{new-4}
		\frac{R}{2}\frac{d}{dt}\int_{\Omega}|\nabla q_t|^2dx+R\int_{\Omega}\nabla q_t\cdot\nabla\mbox{div}u_tdx
=R\int_{\Omega}\partial_{t}\nabla\mbox{div}(qu)\cdot \nabla q_tdx.
	\end{equation}
The integration on the right hand side will appear term $\nabla^2q_t$ which is not included in $\mathcal{D}_1(t)$, we shall work on it carefully.
\begin{equation*}
\begin{split}
&\int_{\Omega}\partial_{t}\nabla\mbox{div}(qu)\cdot \nabla q_tdx=\int_{\Omega}u\cdot\nabla^2q_t \cdot \nabla q_tdx\\[2mm]
&+\int_{\Omega}\Big(\mbox{div}u\nabla q_t+q_t\nabla\mbox{div}u+\mbox{div}u_t\nabla q+q \nabla\mbox{div}u_t+\nabla u_t\cdot \nabla q
+u_t\cdot\nabla^2q+\nabla u\cdot \nabla q_t\Big)\cdot \nabla q_tdx\\[2mm]
&\leq\int_{\Omega}u\cdot\nabla^2q_t \cdot \nabla q_tdx+\Big( \|\nabla u\|_{L^{\infty}}+ \|\nabla \mbox{div}u\|_{L^3}+\|\nabla q\|_{L^3}+\|q\|_{L^{\infty}}+\|\nabla^2 q\| \Big)\cdot
\Big(\|\nabla q_t\|^2+\|u_t\|^2_{2}\Big)\\[2mm]
&\leq \int_{\Omega}u\cdot\nabla^2q_t \cdot \nabla q_tdx + \delta_1 \Big(\|\nabla q_t\|^2+\|u_t\|^2_{2}\Big).
\end{split}
\end{equation*}
In addition,  integrating by parts with $u\cdot n|_{\partial\Omega}=0$ gives
\begin{equation*}
 \int_{\Omega}u\cdot\nabla^2q_t \cdot \nabla q_tdx=-\frac{1}{2}\int_{\Omega}\mbox{div}u|\nabla q_t|^2dx\leq\delta_1\|\nabla q_t\|^2.
\end{equation*}
Therefore, plugging    into (\ref{new-4}), we obtain
\begin{equation}\label{new-3}
\begin{split}
		\frac{R}{2}\frac{d}{dt}\int_{\Omega}|\nabla q_t|^2dx+R\int_{\Omega}\nabla q_t\cdot\nabla\mbox{div}u_tdx\leq \delta_1 \Big(\|\nabla q_t\|^2+\|u_t\|^2_{2}\Big).
\end{split}
	\end{equation}
Finally, summing  (\ref{new-1}),  (\ref{new-2}) and (\ref{new-3}), and integrating the resulting estimates over $[0,t]$, we obtain the desired estimate (\ref{6-1-0}).

In addition, applying Lemma \ref{lem-elliptic-3} to $\theta_t$, that is $\int_{0}^t\|\nabla^2\theta_t\|^2ds\leq C\int_0^t\big(\|\Delta\theta_t\|^2+\|\nabla\theta_t\|^2\big)ds$, so together
with Lemma \ref{6-1} and (\ref{6-1-0}), we obtain the estimate (\ref{2theta_t}). This
completes the proof of Lemma \ref{lem-uxxt}.
\end{proof}

\begin{lem}\label{curl^2u_t}
	\begin{equation}\label{2-0}
		\|(\sqrt{\rho}\mbox{curl}u_t,\mbox{curl}H_t)\|^2+c\int^t_0\|(\mbox{curl}^2u_t,\mbox{curl}^2H_t)\|^2ds\leq
\|(\sqrt{\rho}\mbox{curl}u_t,\mbox{curl}H_t)\|^2(0)+ C\delta_1\int^t_0\mathcal{D}_1^2(s)ds,
	\end{equation}	
	where $c>0$, $C>0$ are positive constants independent of $t$.	Moreover, it holds
\begin{equation}
\begin{split}
		\int_0^t\big(\|\nabla^2H_t\|^2 +\|\nabla^2 u_t\|^2\big)ds\leq  C\mathcal{E}^2_1(0)+C\delta_1\int^t_0\mathcal{D}_1^2(s)ds.
         \end{split}
	\end{equation}
\end{lem}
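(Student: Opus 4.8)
The plan is to reproduce the structure of the proof of Lemma~\ref{curl^2u} at the level of the first time derivative. Writing $w=\mbox{curl}u$ and differentiating the vorticity equation (\ref{curl-equation-u}) in time, I would obtain a transport--diffusion equation for $w_t=\mbox{curl}u_t$ of the form $\rho(\partial_t w_t+u\cdot\nabla w_t)-\mu\Delta w_t=\tilde K+\mbox{curl}(\mbox{curl}H\times H)_t$, where $\tilde K=K_t-\rho_t w_t-\rho_t\,u\cdot\nabla w-\rho\,u_t\cdot\nabla w$ gathers every term not already in transport or diffusion form. Since differentiating the system in $t$ preserves the boundary conditions (as used in Lemma~\ref{6-1}), one has $\mbox{curl}u_t\times n|_{\partial\Omega}=0$, and also $\mbox{div}w_t=0$, so that $-\Delta w_t=\mbox{curl}^2 w_t$. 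Multiplying by $w_t$, integrating over $\Omega$ and integrating by parts, the transport part combines with $\tfrac{1}{2}\tfrac{d}{dt}\int_\Omega\rho|w_t|^2$, and the remainder $\tfrac{1}{2}\int_\Omega(\rho_t+\mbox{div}(\rho u))|w_t|^2$ vanishes by the continuity equation $(\ref{prob-linear})_1$. This gives the identity $\tfrac{1}{2}\tfrac{d}{dt}\int_\Omega\rho|\mbox{curl}u_t|^2\,dx+\mu\|\mbox{curl}^2 u_t\|^2=\int_\Omega\tilde K\cdot\mbox{curl}u_t\,dx+\int_\Omega(\mbox{curl}H\times H)_t\cdot\mbox{curl}^2 u_t\,dx$, the magnetic term having been integrated by parts once using $\mbox{curl}u_t\times n|_{\partial\Omega}=0$.

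For the right-hand side, every term in $\tilde K$ is a product of a factor controlled by $\mathcal{E}_1\le\delta_1$ with a derivative appearing in $\mathcal{D}_1$; the genuinely new top-order contributions are $\nabla q\times u_{tt}$ coming from $K_t$ and the pieces $\mbox{curl}H_t\times H$ and $\mbox{curl}H\times H_t$ from $(\mbox{curl}H\times H)_t$. Using $\|\mbox{curl}u_t\|_{L^6}\le C\|\nabla\mbox{curl}u_t\|\le C(\|\mbox{curl}^2 u_t\|+\|\mbox{curl}u_t\|)$ from Lemma~\ref{lem-div-curl} with $\mbox{curl}u_t\times n|_{\partial\Omega}=0$, I would bound for instance $\int_\Omega\nabla q\times u_{tt}\cdot\mbox{curl}u_t\,dx\le\|\nabla q\|_{L^3}\|u_{tt}\|\,\|\mbox{curl}u_t\|_{L^6}$ and $\int_\Omega(\mbox{curl}H\times H)_t\cdot\mbox{curl}^2 u_t\,dx\le\|(\mbox{curl}H\times H)_t\|\,\|\mbox{curl}^2 u_t\|$, then use Young's inequality to absorb $\tfrac{\mu}{4}\|\mbox{curl}^2 u_t\|^2$ into the left-hand side, all remaining factors carrying a power of $\delta_1$ and only quantities present in $\mathcal{D}_1$. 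The outcome is $\tfrac{1}{2}\tfrac{d}{dt}\int_\Omega\rho|\mbox{curl}u_t|^2\,dx+\tfrac{\mu}{2}\|\mbox{curl}^2 u_t\|^2\le C\delta_1\mathcal{D}_1^2$.

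The estimate for $\mbox{curl}H_t$ is parallel but simpler. Differentiating (\ref{curl-equation-H}) in time and setting $\psi=\mbox{curl}H_t$ gives $\psi_t+\eta\,\mbox{curl}^2\psi=\mbox{curl}^2(u\times H)_t$. Multiplying by $\psi$, integrating by parts with $\mbox{curl}H_t\times n|_{\partial\Omega}=0$ and shifting one curl onto $\psi$, I obtain $\tfrac{1}{2}\tfrac{d}{dt}\|\mbox{curl}H_t\|^2+\eta\|\mbox{curl}^2 H_t\|^2=\int_\Omega\mbox{curl}(u\times H)_t\cdot\mbox{curl}^2 H_t\,dx\le\tfrac{\eta}{2}\|\mbox{curl}^2 H_t\|^2+C\|\mbox{curl}(u\times H)_t\|^2$, and $\|\mbox{curl}(u\times H)_t\|^2\le C\delta_1\mathcal{D}_1^2$ follows from the smallness of $\|(u,H)\|_{L^\infty}$ and $\|\nabla(u,H)\|_{L^3}$ tested against first derivatives of $(u_t,H_t,u,H)$ belonging to $\mathcal{D}_1$. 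Summing the two estimates, noting $\|\sqrt{\rho}\,\mbox{curl}u_t\|^2\simeq\|\mbox{curl}u_t\|^2$ by the smallness of $q$, and integrating over $[0,t]$ yields (\ref{2-0}).

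For the second-order bound, since $u_t\cdot n=\mbox{curl}u_t\times n=0$ and $H_t\cdot n=\mbox{curl}H_t\times n=0$ on $\partial\Omega$, I would apply the Laplace form of Lemma~\ref{lem-elliptic-1} to $u_t$ and $H_t$. With $\Delta u_t=\nabla\mbox{div}u_t-\mbox{curl}^2 u_t$ and $\Delta H_t=-\mbox{curl}^2 H_t$ (using $\mbox{div}H_t=0$), this gives $\int_0^t\|\nabla^2 u_t\|^2\,ds\le C\int_0^t(\|\nabla\mbox{div}u_t\|^2+\|\mbox{curl}^2 u_t\|^2+\|\nabla u_t\|^2)\,ds$ and $\int_0^t\|\nabla^2 H_t\|^2\,ds\le C\int_0^t(\|\mbox{curl}^2 H_t\|^2+\|\nabla H_t\|^2)\,ds$; the $\mbox{curl}^2$ integrals come from (\ref{2-0}), the $\|\nabla\mbox{div}u_t\|^2$ integral from (\ref{6-1-0}), and the crucial dissipation integrals $\int_0^t\|\nabla u_t\|^2$ and $\int_0^t\|\nabla H_t\|^2$ from Lemma~\ref{6-1}, each bounded by $C\mathcal{E}_1^2(0)+C\delta_1\int_0^t\mathcal{D}_1^2$. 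The main obstacle I anticipate is the bookkeeping of $\tilde K$ and of $\mbox{curl}(u\times H)_t$: one must check that every term carries a smallness factor $\delta_1$ and that the few top-order pieces (those containing $u_{tt}$, $\nabla q_t$, $\nabla H_t$, or $\mbox{curl}^2 u_t$ itself) are absorbable into the dissipation. The clean transport cancellation through the continuity equation and the choice of multiplier $w_t$ rather than $\mbox{curl}^2 u_t$ are exactly what make this possible.
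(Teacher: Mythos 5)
Your proposal is correct and follows essentially the same route as the paper: differentiate the vorticity equations (\ref{curl-equation-u}) and (\ref{curl-equation-H}) in time (the boundary conditions $w_t\times n|_{\partial\Omega}=\phi_t\times n|_{\partial\Omega}=0$ being preserved), test against $w_t$ and $\phi_t$, use the continuity equation to cancel the transport contribution, absorb the top-order terms into the dissipation, and then recover $\int_0^t\|\nabla^2u_t\|^2$ and $\int_0^t\|\nabla^2H_t\|^2$ via Lemma \ref{lem-elliptic-1} together with (\ref{2-0}), (\ref{6-1-0}) and Lemma \ref{6-1}. The paper states this only in outline (``similar to the proof of Lemma \ref{curl^2u}''), and your write-up supplies exactly the details that outline presupposes.
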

\begin{proof}
Notice that differentiation  of the equations  (\ref{curl-equation-u}) and (\ref{curl-equation-H}) with respect to $t$ will keep the boundary conditions $w_{t}\times n|_{\partial\Omega}=0$ and $\phi_{t}\times n |_{\partial\Omega}=0$.
Similar to  the proof on Lemma \ref{curl^2u}, computing the integral
	\begin{equation*}
\int_{\Omega}
\partial_t(\ref{curl-equation-u})\cdot w_tdx, \quad  \mbox{and}\quad  \int_{\Omega} \partial_t(\ref{curl-equation-H})\cdot\phi_tdx
\end{equation*}
gives
\begin{equation}\label{2-2-2}
\frac{1}{2}\frac{d}{dt}\int_{\Omega}\rho|w_t|^2dx+\frac{\mu}{2}\|\mbox{curl}w_t\|^2
\leq C \mathcal{E}_1(t)\mathcal{D}^2_1(t),
\end{equation}
and
\begin{equation}\label{2-3-2}
\frac{1}{2}\frac{d}{dt}\int_{\Omega}|\phi_t|^2dx+ \frac{\eta}{2}\|\mbox{curl}\phi_t\|^2\leq C \mathcal{E}_1(t)\mathcal{D}^2_1(t).
        \end{equation}
Then, summing up (\ref{2-2-2}) and (\ref{2-3-2}),  integrating over $[0,t]$ gives (\ref{2-0}).

 Furthermore, applying Lemma \ref{lem-elliptic-1} with $v=H_t$, and noting that $\Delta H_t=-\mbox{curl}^2 H_t$, we get
     \begin{equation*}
\begin{split}
		\int_0^t\|\nabla^2H_t\|^2 ds&\leq C\int_0^t\Big(\|\Delta H_t\|^2+\|\nabla H_t\|^2\Big)ds\\[2mm]
&\leq C\int_0^t\Big(\|\mbox{curl}^2 H_t\|^2+\|\nabla H_t\|^2\Big)ds\\[2mm]
&\leq C\mathcal{E}^2_1(0)+C\delta_1\int^t_0\mathcal{D}_1^2(s)ds,
         \end{split}
	\end{equation*}
thanks to Lemma \ref {6-1} and (\ref{2-0}).

Similarly, applying Lemma \ref{lem-elliptic-1} with $v=u_t$, we get
     \begin{equation*}
\begin{split}
		\int_0^t\|\nabla^2u_t\|^2ds&\leq C\int_0^t\Big(\|\Delta u_t\|^2+\|\nabla u_t\|^2\Big)ds\\[2mm]
&\leq C\int_0^t\Big(\|\mbox{curl}^2 u_t\|^2+\|\nabla \mbox{div}u_t\|^2+\|\nabla u_t\|^2\Big)ds\\[2mm]
&\leq C\mathcal{E}^2_1(0)+C\delta_1\int^t_0\mathcal{D}_1^2(s)ds,
         \end{split}
	\end{equation*}
thanks to Lemma \ref {lem-uxxt}, Lemma \ref{6-1} and (\ref{2-0}).
\end{proof}

Next, we shall derive the estimate of $(\nabla^2u, \nabla^2H, \nabla^2\theta)$ into the following two lemmas.

\begin{lem}\label{3}
	\begin{equation}\label{Deltatheta}
		\|(\nabla\mbox{div}u,\Delta\theta)\|^2+c\int^t_0\|(\nabla q_t,\nabla\theta_t)\|^2ds\leq C\|(\nabla\mbox{div}u,\Delta\theta)\|^2(0)
+ C\delta_1\int^t_0\mathcal{D}_1^2(s)ds,
	\end{equation}	
	where $c>0$, $C>0$ are positive constants independent of $t$.	

Moreover,
\begin{equation}\label{2theta}
\|\nabla^2\theta\|^2\leq  C\mathcal{E}^2_1(0)+ C\delta_1\int^t_0\mathcal{D}_1^2(s)ds.
\end{equation}
\end{lem}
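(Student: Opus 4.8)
The plan is to obtain a differential inequality for the functional $\|(\nabla\mbox{div}u,\Delta\theta)\|^2$ by testing the momentum and temperature equations of \eqref{prob-linear} against the ``one time-derivative higher'' multipliers $\nabla\mbox{div}u_t$ and $\Delta\theta_t$, and then, rather than closing by a Gronwall argument, to feed in the space--time bound $\int_0^t\|(\nabla\mbox{div}u,\Delta\theta)\|^2\,ds$ already established in \eqref{q} of Lemma \ref{2}.

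First I would multiply $(\ref{prob-linear})_2$ by $-\nabla\mbox{div}u_t$ and integrate over $\Omega$. Writing $\Delta u=\nabla\mbox{div}u-\mbox{curl}^2u$ and using $\mbox{curl}u\times n|_{\partial\Omega}=0$ to discard $\int\mbox{curl}^2u\cdot\nabla\mbox{div}u_t$, the viscous part contributes the energy term $\frac{2\mu+\lambda}{2}\frac{d}{dt}\|\nabla\mbox{div}u\|^2$, while $-\int\rho u_t\cdot\nabla\mbox{div}u_t$, integrated by parts in space (the boundary integral vanishes since $u_t\cdot n=0$), produces the nonnegative $\|\sqrt\rho\,\mbox{div}u_t\|^2$. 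The pressure and temperature terms are integrated by parts in time, $-R\int\nabla(q+\theta)\cdot\nabla\mbox{div}u_t=-R\frac{d}{dt}\int\nabla(q+\theta)\cdot\nabla\mbox{div}u+R\int\nabla(q_t+\theta_t)\cdot\nabla\mbox{div}u$, the first piece being absorbed into the energy functional and the second kept as a coupling term. Simultaneously, multiplying $(\ref{prob-linear})_3$ by $-\Delta\theta_t$ and integrating by parts (all boundary terms vanish because $\partial_n\theta_t=0$) yields $\frac{\kappa}{2}\frac{d}{dt}\|\Delta\theta\|^2+c_v\|\sqrt\rho\,\nabla\theta_t\|^2+R\int\nabla\mbox{div}u\cdot\nabla\theta_t$, the other contributions being convective or nonlinear and hence bounded by $C\mathcal{E}_1(t)\mathcal{D}_1^2(t)$.

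The main obstacle is that these coupling terms do \emph{not} cancel here. In the time-differentiated computation of Lemma \ref{lem-uxxt} the pressure/continuity coupling is exactly skew and drops out, but the time integration by parts used above flips a sign and destroys that skew-symmetry; indeed one checks that the two temperature couplings \emph{add} to $2R\int\nabla\mbox{div}u\cdot\nabla\theta_t$ rather than cancel. I would therefore not aim for a clean $\frac{d}{dt}(\mathrm{energy})+(\mathrm{dissipation})\le(\mathrm{small})$ identity. Instead I move every coupling term to the right and estimate it by Young's inequality, e.g. $2R\int\nabla\mbox{div}u\cdot\nabla\theta_t\le\frac{c_v}{2}\|\sqrt\rho\,\nabla\theta_t\|^2+C\|\nabla\mbox{div}u\|^2$ and $R\int\nabla q_t\cdot\nabla\mbox{div}u\le\varepsilon\|\nabla\mbox{div}u\|^2+C\|\nabla q_t\|^2$, absorbing the $\|\nabla\theta_t\|^2$ part into the dissipation already present. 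This leaves $\|\nabla\mbox{div}u\|^2$ and $\|\nabla q_t\|^2$ on the right, which would be fatal for a pointwise Gronwall argument since their coefficients are not small; the point is to integrate over $[0,t]$ first, whereupon $\int_0^t\|\nabla\mbox{div}u\|^2\,ds$ is controlled by \eqref{q} of Lemma \ref{2}, and $\int_0^t\|\nabla q_t\|^2\,ds$ reduces to it through the continuity identity $\nabla q_t=-\nabla\mbox{div}u-\nabla\mbox{div}(qu)$, the remaining nonlinear term obeying $\int_0^t\|\nabla\mbox{div}(qu)\|^2\,ds\le C\delta_1\int_0^t\mathcal{D}_1^2\,ds$. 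Using in addition the coercivity of the energy functional (its cross terms $R\int\nabla(q+\theta)\cdot\nabla\mbox{div}u$ are absorbed with the pointwise bounds on $\|\nabla(q+\theta)\|^2$ from Lemmas \ref{lem-ut} and \ref{2}), I obtain \eqref{Deltatheta}; the dissipation $\int_0^t\|\nabla q_t\|^2\,ds$ on its left-hand side is then appended for free using the same continuity identity.

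Finally, \eqref{2theta} is an immediate corollary: the elliptic estimate of Lemma \ref{lem-elliptic-3} gives $\|\nabla^2\theta\|^2\le C(\|\Delta\theta\|^2+\|\nabla\theta\|^2)$, and the right-hand side is bounded by \eqref{Deltatheta} together with the pointwise control of $\|\nabla\theta\|^2$ from Lemma \ref{lem-ut}. I expect the only genuinely technical (as opposed to conceptual) work to be the time integration by parts of the inertial term $\rho(u\cdot\nabla u)$ and the verification that every remainder is of the schematic form $\mathcal{E}_1(t)\mathcal{D}_1^2(t)$, so that its time integral is $\le C\delta_1\int_0^t\mathcal{D}_1^2\,ds$.
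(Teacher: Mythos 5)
Your argument is correct, but it takes a genuinely different route from the paper's. The paper proves \eqref{Deltatheta} by a three-way pairing in which every coupling term cancels \emph{exactly}: it tests $\partial_t(\ref{prob-linear})_2$ against $\nabla\mbox{div}u$ (not $(\ref{prob-linear})_2$ against $\nabla\mbox{div}u_t$), tests $\nabla(\ref{prob-linear})_1$ against $R\nabla q_t$, and tests $\nabla(\ref{prob-linear})_3$ against $\nabla\theta_t$. The first pairing yields $\tfrac{2\mu+\lambda}{2}\tfrac{d}{dt}\|\nabla\mbox{div}u\|^2$ plus the coupling $-R\int(\nabla q_t+\nabla\theta_t)\cdot\nabla\mbox{div}u\,dx$, while the second and third yield the dissipations $R\|\nabla q_t\|^2$ and $c_v\|\sqrt{\rho}\nabla\theta_t\|^2$ (and $\tfrac{\kappa}{2}\tfrac{d}{dt}\|\Delta\theta\|^2$) plus the couplings $+R\int\nabla\mbox{div}u\cdot\nabla q_t\,dx$ and $+R\int\nabla\mbox{div}u\cdot\nabla\theta_t\,dx$; summing, the couplings vanish identically, the $\|\nabla q_t\|^2$ dissipation appears directly, and \eqref{Deltatheta} follows by one time integration with no appeal to Lemma \ref{2}. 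Your version --- testing only the momentum and heat equations with the higher multipliers $-\nabla\mbox{div}u_t$ and $-\Delta\theta_t$, accepting the non-cancelling couplings $2R\int\nabla\mbox{div}u\cdot\nabla\theta_t\,dx$ and $R\int\nabla q_t\cdot\nabla\mbox{div}u\,dx$, and discharging them after time integration via \eqref{q} of Lemma \ref{2} together with the identity $\nabla q_t=-\nabla\mbox{div}u-\nabla\mbox{div}(qu)$ and $\|\nabla\mbox{div}(qu)\|\le C\mathcal{E}_1\mathcal{D}_1$ --- does close, and your self-diagnosis of why the skew-symmetry of Lemma \ref{lem-uxxt} is lost under time integration by parts is accurate; what you apparently did not see is that the paper restores the cancellation simply by putting the extra time derivative on the equation instead of on the multiplier. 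The price of your route is cosmetic: the right-hand side becomes $C\mathcal{E}_1^2(0)$ rather than $C\|(\nabla\mbox{div}u,\Delta\theta)\|^2(0)$, since you import initial data from Lemmas \ref{lem-ut} and \ref{2} through the cross term $R\int\nabla(q+\theta)\cdot\nabla\mbox{div}u\,dx$ and through \eqref{q}; this is immaterial for Proposition \ref{prop}. The gain is robustness (no reliance on the exact algebraic cancellation) plus the bonus dissipation $\|\sqrt{\rho}\,\mbox{div}u_t\|^2$. The deduction of \eqref{2theta} from Lemma \ref{lem-elliptic-3} is identical in both proofs. One small correction of framing: the paper does not use a Gronwall argument here either --- all nonlinear remainders are bounded by $C\mathcal{E}_1(t)\mathcal{D}_1^2(t)$ and absorbed by the smallness of $\delta_1$ at the level of Proposition \ref{prop}, exactly as in your scheme.
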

\begin{proof}
Taking $\partial_t$ to equation $(\ref{prob-linear})_2$,  multiplying the resulted equality by $\nabla\mbox{div}u$ in $L^{2}(\Omega)$, and using the fact
$\int_{\Omega}\mbox{curl}^2u_t\cdot\nabla\mbox{div}udx=0$,
we get
\begin{equation}\label{3-1}
	\begin{split}
&\frac{2\mu+\lambda}{2}\frac{d}{dt}\int_{\Omega}|\nabla\mbox{div}u|^2dx-R\int_{\Omega}(\nabla q_t+\nabla \theta_t)\cdot\nabla \mbox{div}udx\\[2mm]
&=\int_{\Omega}\rho (u_{tt}+u\cdot \nabla u_t)\cdot\nabla \mbox{div}udx+\int_{\Omega}\Big(q_t u_t+(\rho u)_t\cdot\nabla u
+R\nabla(q\theta)_{t}-(\mbox{curl} H\times H)_{t}\Big)\cdot \nabla \mbox{div}udx\\[2mm]
&\leq \mathcal{E}_1(t)\mathcal{D}_1^2(t).
\end{split}
\end{equation}
To eliminate the singular terms on the left-hand side of (\ref{3-1}), we apply operator $\nabla$ to equations $(\ref{prob-linear})_1$ and $(\ref{prob-linear})_3$, then multiply the resulting equality by $R\nabla q_t$ and $\nabla\theta_t$ in $L^{2}(\Omega)$ to get
\begin{equation}\label{3-2}
	\begin{split}
&R\|\nabla q_t\|^2
+R\int_{\Omega}\nabla\mbox{div}u\cdot\nabla q_t dx\\[2mm]
&=-R\int_{\Omega}\nabla(\mbox{div}(qu))\cdot\nabla q_t dx \\[2mm]
&\leq \mathcal{E}_1(t)\mathcal{D}_1^2(t),
\end{split}
\end{equation}
and
\begin{equation}\label{3-3}
	\begin{split}
&\frac{\kappa}{2}\frac{d}{dt}\int_{\Omega}|\Delta\theta|^2dx+c_v\|\sqrt{\rho}\nabla\theta_t\|^2
+R\int_{\Omega}\nabla\mbox{div}u\cdot\nabla\theta_tdx\\[2mm]
&=\int_{\Omega}\Big(c_v\nabla q\cdot (\theta_t+u\cdot\theta)+c_v\rho\nabla(u\cdot\nabla\theta)\Big)\cdot\nabla\theta_tdx\\[2mm]
&+\int_{\Omega}\nabla \Big(\lambda(\mbox{div}u)^2+2\mu |S(u)|^2-R(\rho\theta+q)\mbox{div}u+\eta|\mbox{curl}H|^2\Big)\cdot \nabla\theta_tdx\\[2mm]
&\leq \mathcal{E}_1(t)\mathcal{D}_1^2(t).
\end{split}
\end{equation}
Then, from (\ref{3-1}), combining with (\ref{3-2}) and (\ref{3-3}), we obtain (\ref{Deltatheta}).

Moreover, applying Lemma \ref{lem-elliptic-3} to $\theta$, that is $\|\nabla^2\theta\|^2\leq C\big(\|\Delta\theta\|^2+\|\nabla\theta\|^2\big)$, so
combining with Lemma \ref{2} and (\ref{Deltatheta}), we obtain the estimate (\ref{2theta}). The proof is completed.
\end{proof}	

\begin{lem}\label{7}
	\begin{equation}\label{7-1}
		\|(\sqrt{\rho}\mbox{curl}^2u,\mbox{curl}^2H)\|^2+c\int^t_0\|(\mbox{curl}^3 u,\mbox{curl}^3 H)\|^2ds\leq \|(\sqrt{\rho}\mbox{curl}^2u,\mbox{curl}^2H)\|^2(0)+C\delta_1\int^t_0\mathcal{D}_1^2(s)ds,
	\end{equation}	
	where $c>0$, $C>0$ are positive constants independent of $t$.	
Moreover,
\begin{equation}\label{7-3}
\|\nabla^2H\|^2+\int_0^t\|\nabla^3H\|^2\leq
C\mathcal{E}^2_1(0)+C\delta_1\int^t_0\mathcal{D}_1^2(s)ds.
\end{equation}
and
\begin{equation}\label{7-2}
\|\nabla^2 u\|^2+\int_0^t\|\nabla\mbox{curl}^2u\|^2\leq
C\mathcal{E}^2_1(0)+C\delta_1\int^t_0\mathcal{D}_1^2(s)ds.
\end{equation}
\end{lem}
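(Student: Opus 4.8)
The plan is to reproduce the argument of Lemma \ref{curl^2u} \emph{one derivative higher}, working again from the vorticity-type equations \eqref{curl-equation-u} and \eqref{curl-equation-H} for $w=\mbox{curl}u$ and $\phi=\mbox{curl}H$. To capture $\mbox{curl}^2u,\mbox{curl}^2H$ in $L^\infty_t$ and $\mbox{curl}^3u,\mbox{curl}^3H$ in $L^2_t$, I would test \eqref{curl-equation-u} against $\mbox{curl}^2w$ and \eqref{curl-equation-H} against $\mbox{curl}^2\phi$, rather than against $\mbox{curl}^2u$ and $\mbox{curl}^2H$ directly. This choice is deliberate: it lets every boundary integral be controlled by the \emph{available} conditions $w_t\times n=0$ and $\phi_t\times n=0$ (which persist after differentiating \eqref{boundary-linear} in $t$, as already used in Lemma \ref{curl^2u_t}), and avoids ever needing the unavailable condition $\mbox{curl}^2u\times n=0$.

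For the principal parts, since $\mbox{div}w=\mbox{div}\,\mbox{curl}u=0$ and $\mbox{div}\phi=0$, the identity $-\Delta w=\mbox{curl}^2w$ gives
\[
-\mu\int_\Omega\Delta w\cdot\mbox{curl}^2w\,dx=\mu\|\mbox{curl}^2w\|^2=\mu\|\mbox{curl}^3u\|^2,
\]
with \emph{no} integration by parts, and likewise $\eta\int_\Omega\mbox{curl}^2\phi\cdot\mbox{curl}^2\phi\,dx=\eta\|\mbox{curl}^3H\|^2$. The time terms are integrated by parts once: writing $\int_\Omega\rho w_t\cdot\mbox{curl}^2w\,dx=\int_\Omega\mbox{curl}(\rho w_t)\cdot\mbox{curl}w\,dx+\int_{\partial\Omega}(\rho w_t\times n)\cdot\mbox{curl}w\,dS$, the boundary term drops because $w_t\times n=0$, and $\mbox{curl}(\rho w_t)=\rho(\mbox{curl}w)_t+\nabla\rho\times w_t$ produces $\tfrac12\frac{d}{dt}\int_\Omega\rho|\mbox{curl}^2u|^2\,dx$ plus the harmless commutators $\tfrac12\int\rho_t|\mbox{curl}^2u|^2$ and $\int(\nabla\rho\times w_t)\cdot\mbox{curl}^2u$; the magnetic time term is analogous and cleaner (no $\rho$, so no commutator), giving $\tfrac12\frac{d}{dt}\|\mbox{curl}^2H\|^2$.

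All remaining contributions, namely $\int K\cdot\mbox{curl}^2w$, $\int\mbox{curl}(\mbox{curl}H\times H)\cdot\mbox{curl}^2w$, the convection $\int\rho\,u\cdot\nabla w\cdot\mbox{curl}^2w$, the commutators above, and $\int\mbox{curl}^2(u\times H)\cdot\mbox{curl}^2\phi$, are estimated by Cauchy--Schwarz against the dissipation: each yields $\epsilon(\|\mbox{curl}^3u\|^2+\|\mbox{curl}^3H\|^2)$, absorbed on the left for small $\epsilon$, plus a factor that is a product of the solution with its derivatives and is therefore bounded by $C\mathcal{E}_1(t)\mathcal{D}_1^2(t)\leq C\delta_1\mathcal{D}_1^2(t)$ (the high-order norms $\|\nabla w\|,\|\nabla\phi\|\lesssim\mathcal{D}_1$ all sit inside $\mathcal{D}_1$ via $\|\nabla u\|_2,\|\nabla H\|_2$). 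Summing the two inequalities and integrating over $[0,t]$ gives \eqref{7-1}.

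It remains to convert the curl bounds into \eqref{7-3} and \eqref{7-2}, and here the one genuinely delicate point appears. For the time-integrated parts I would apply Proposition \ref{prop-imp} to the divergence-free fields $\mbox{curl}^2u$ and $\mbox{curl}^2H$, which requires their normal traces to vanish on $\partial\Omega$. This is supplied by the observation that a field with zero tangential trace has vanishing normal curl: since $w\times n=0$ (resp.\ $\phi\times n=0$) on $\partial\Omega$, the field $w$ (resp.\ $\phi$) is purely normal there, so $(\mbox{curl}w)\cdot n=(\mbox{curl}^2u)\cdot n=0$ and $(\mbox{curl}^2H)\cdot n=0$ on $\partial\Omega$. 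Consequently $\|\nabla\mbox{curl}^2u\|\le C\|\mbox{curl}^3u\|$ and $\|\nabla\mbox{curl}^2H\|\le C\|\mbox{curl}^3H\|$, which after integration and \eqref{7-1} yields the dissipative parts of \eqref{7-2} and \eqref{7-3}. For the $L^\infty_t$ parts I would use Lemma \ref{lem-elliptic-1}: $\|\nabla^2H\|\leq C(\|\mbox{curl}^2H\|+\|\nabla H\|)$ and, with $s=1$ together with the bound $\|\nabla\mbox{curl}^2H\|\le C\|\mbox{curl}^3H\|$ just obtained, $\|\nabla^3H\|\lesssim\|\mbox{curl}^3H\|+\|\mbox{curl}^2H\|+\|\nabla H\|$; similarly $\|\nabla^2u\|\leq C(\|\mbox{curl}^2u\|+\|\nabla\mbox{div}u\|+\|\nabla u\|)$, where $\|\nabla\mbox{div}u\|$ is already controlled in $L^\infty_t$ by Lemma \ref{3}. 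Combining with the lower-order Lemmas \ref{lem-1}, \ref{lem-ut} and \eqref{7-1} completes \eqref{7-3} and \eqref{7-2}. The main obstacle throughout is precisely the treatment of the third-order boundary terms; it is resolved by the multiplier choice $\mbox{curl}^2w,\mbox{curl}^2\phi$ in the energy step and by the vanishing of the normal curl $(\mbox{curl}^2u)\cdot n=(\mbox{curl}^2H)\cdot n=0$ in the div--curl step.
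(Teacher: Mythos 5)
Your proposal follows essentially the same route as the paper: test the vorticity equations \eqref{curl-equation-u} and \eqref{curl-equation-H} against $\mbox{curl}^2w$ and $\mbox{curl}^2\phi$, integrate the time terms by parts once using $w_t\times n|_{\partial\Omega}=0$ and $\phi_t\times n|_{\partial\Omega}=0$, absorb the remaining terms into $C\mathcal{E}_1(t)\mathcal{D}_1^2(t)$, and then recover \eqref{7-3} and \eqref{7-2} from the div--curl estimate for $\mbox{curl}^2u$, $\mbox{curl}^2H$ (using $\mbox{curl}^2u\cdot n=\mbox{curl}^2H\cdot n=0$ on $\partial\Omega$) together with Lemma \ref{lem-elliptic-1} and the earlier Lemmas \ref{lem-ut}, \ref{curl^2u} and \ref{3}. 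The only cosmetic difference is that you invoke Proposition \ref{prop-imp} where the paper uses Lemma \ref{lem-div-curl} (picking up a harmless lower-order term already controlled by Lemma \ref{curl^2u}); the argument is correct as written.
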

\begin{proof}

Firstly,   integrating by parts with $w_t\times n|_{\partial\Omega}=0$, it holds
\begin{equation*}
\begin{split}
&\int_{\Omega}\rho w_t\cdot\mbox{curl}^2wdx
=\int_{\Omega}\mbox{curl}(\rho w_t)\cdot\mbox{curl}wdx\\[2mm]
&=\frac{1}{2}\frac{d}{dt}\int_{\Omega}\rho|\mbox{curl}w|^2dx-\frac{1}{2}\int_{\Omega}q_t|\mbox{curl}w|^2dx
+\int_{\Omega}(\nabla q\times w_t)\cdot \mbox{curl}wdx.
\end{split}
\end{equation*}
Taking
(\ref{curl-equation-u}) inner with $\mbox{curl}^2 w$, and integrating over $\Omega$, we obtain
\begin{equation}\label{5-1}
\begin{split}
&\frac{1}{2}\frac{d}{dt}\int_{\Omega}\rho|\mbox{curl}w|^2 dx+\mu \|\mbox{curl}^2w\|^2\\[2mm]
&=\frac{1}{2}\int_{\Omega}q_t|\mbox{curl}w|^2dx
-\int_{\Omega}(\nabla q\times w_t)\cdot \mbox{curl}w dx+\int_{\Omega}\Big(K+\mbox{curl}(\mbox{curl}H\times H)-\rho u\cdot w\Big)\cdot \mbox{curl}^2w dx\\[2mm]
&\leq \mathcal{E}_1(t)\mathcal{D}_1^2(t).
\end{split}
\end{equation}
Similarly, taking
$(\ref{curl-equation-H})$ inner with $\mbox{curl}^2\phi$ in $L^{2}(\Omega)$, one has
	\begin{equation}\label{5-2}
\begin{split}
&\frac{1}{2}\frac{d}{dt}\int_{\Omega}|\mbox{curl}\phi|^2dx+\eta \|\mbox{curl}^2\phi\|^2
=\int_{\Omega}\mbox{curl}^2(u\times H)\cdot \mbox{curl}^2\phi dx\\[2mm]
&\leq \mathcal{E}_1(t)\mathcal{D}_1^2(t).
\end{split}
\end{equation}
Therefore, integrating (\ref{5-1})  and (\ref{5-2}) over $[0,t]$ gives (\ref{7-1}).

Moreover, since $\mbox{curl}^2u\cdot n|_{\partial \Omega}=0$, applying Lemma \ref{lem-div-curl}, it holds
\begin{equation*}
\begin{split}
\int_0^t\|\nabla\mbox{curl}^2u\|^2ds&\leq C\int_0^t\Big(\|\mbox{curl}^3u\|^2+\|\mbox{curl}^2u\|^2\Big)ds\\[2mm]
&\leq C\mathcal{E}^2_1(0)+C\delta_1\int^t_0\mathcal{D}_1^2(s)ds,
\end{split}
\end{equation*}
due to Lemma \ref{curl^2u} and (\ref{7-1}).

Noting that $\mbox{curl}^2H\cdot n|_{\partial \Omega}=0$, applying elliptic estimate and  Lemma \ref{lem-div-curl} again, we has
\begin{equation*}
\begin{split}
\int_0^t\|\nabla^3H\|^2ds&\leq C\int_0^t\Big(\|\Delta H\|_1^2+\|\nabla H\|^2\Big)ds\\[2mm]
&\leq  C\int_0^t\Big(\|\mbox{curl}^2H\|_1^2+\|\nabla H\|^2\Big)ds\\[2mm]
&\leq  C\int_0^t\Big(\|\mbox{curl}^3H\|^2+\|\mbox{curl}^2H\|^2+\|\nabla H\|^2\Big)ds\\[2mm]
&\leq C\mathcal{E}^2_1(0)+C\delta_1\int^t_0\mathcal{D}_1^2(s)ds,
\end{split}
\end{equation*}
thanks to Lemma \ref{lem-1}, Lemma \ref{curl^2u} and (\ref{7-1}).

Furthermore, applying elliptic estimate and the fact $\mbox{div} H=0$, it holds
\begin{equation*}
\begin{split}
\|\nabla^2H\|^2&\leq C\Big(\|\Delta H\|^2+\|\nabla H\|^2\Big)\\[2mm]
&\leq C\Big(\|\mbox{curl}^2 H\|^2+\|\nabla H\|^2\Big)\\[2mm]
&\leq C\mathcal{E}^2_1(0)+C\delta_1\int^t_0\mathcal{D}_1^2(s)ds,
\end{split}
\end{equation*}
thanks to Lemma \ref{lem-ut} and (\ref{7-1}).  Applying elliptic estimate  again, it holds
\begin{equation*}
\begin{split}
\|\nabla^2u\|^2&\leq C(\|\Delta u\|^2+\|\nabla u\|^2\Big)\\[2mm]
&\leq C(\|\mbox{curl}^2 u\|^2+\|\nabla\mbox{div}u\|^2+\|\nabla u\|^2\Big)\\[2mm]
&\leq C\mathcal{E}^2_1(0)+C\delta_1\int^t_0\mathcal{D}_1^2(s)ds,
\end{split}
\end{equation*}
thanks to  Lemma \ref{lem-ut},  Lemma \ref{3} and (\ref{7-1}).
\end{proof}

Now, we prove the estimate of $\nabla^2q$.
\begin{lem}\label{lem-9}
\begin{equation}\label{red}
	\|\nabla^2 q\|^2+c\int_{0}^{t}\|\nabla^2\mbox{div}u\|^2ds
\leq C\mathcal{E}^2_1(0)+ \varepsilon\int_{0}^t \|\nabla^2 q\|^2ds
+C\delta_1\int_0^t \mathcal{D}^2_1(s)ds.
\end{equation}
\end{lem}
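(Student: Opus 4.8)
The plan is to estimate the top-order density derivative $\|\nabla^2 q\|$, which carries no dissipation of its own, by coupling the continuity equation with the divergence part of the momentum equation, while extracting the dissipation of $\nabla^2\mbox{div}u$ as a by-product. The guiding observation is the identity $\Delta u=\nabla\mbox{div}u-\mbox{curl}^2u$, so that $(\ref{prob-linear})_2$ can be read as $R\nabla q=(2\mu+\lambda)\nabla\mbox{div}u-\mu\,\mbox{curl}^2u-\rho(u_t+u\cdot\nabla u)-R\nabla\theta-R\nabla(q\theta)+\mbox{curl}H\times H$, which exhibits $\nabla\mbox{div}u$ and $\nabla q$ as the two coupled ``hyperbolic'' quantities. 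Since the exterior domain admits no Poincar\'e inequality and $\mbox{div}u$ carries no clean boundary condition, I avoid passing through an elliptic bound for $\nabla^2 q$ (which would cost an uncontrolled pointwise $\|\nabla^3 u\|$) and instead close everything through a single energy identity.

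First I would differentiate $(\ref{prob-linear})_1$ twice in space and test against $R\nabla^2 q$; using the transport structure and integrating by parts with $u\cdot n|_{\partial\Omega}=0$, exactly as in the treatment of the term $\int u\cdot\nabla^2 q_t\cdot\nabla q_t$ in Lemma \ref{lem-uxxt}, this produces $\tfrac{R}{2}\tfrac{d}{dt}\|\nabla^2 q\|^2+R\int\nabla^2\mbox{div}u:\nabla^2 q\,dx$ up to contributions bounded by $\delta_1\mathcal{D}_1^2(t)$. Next I differentiate $(\ref{prob-linear})_2$ once in space, test against $\partial_k\nabla\mbox{div}u$ and sum in $k$; the time and viscous terms produce $\tfrac12\tfrac{d}{dt}\|\nabla\mbox{div}u\|^2+(2\mu+\lambda)\|\nabla^2\mbox{div}u\|^2$, while the pressure term $R\nabla q$ produces the coupling $R\int\nabla^2 q:\nabla^2\mbox{div}u$ with sign opposite to the one coming from the continuity identity. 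Adding the two identities, the top-order density--velocity coupling cancels exactly, leaving $\|\nabla^2\mbox{div}u\|^2$ as a clean dissipation. The curl contribution $\int\partial_k\mbox{curl}^2u\cdot\partial_k\nabla\mbox{div}u$ is, up to boundary terms, orthogonal (in the spirit of $\int\mbox{curl}^2u\cdot\nabla\mbox{div}u=0$ used in Lemma \ref{2}), hence either lower order or controlled by $\int_0^t\|\nabla\mbox{curl}^2u\|^2$, already estimated in Lemma \ref{7}.

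It then remains to dispose of the temperature coupling and the nonlinearities. The term $R\int\nabla^2\theta:\nabla^2\mbox{div}u$ is handled by Young's inequality, a fraction of $\|\nabla^2\mbox{div}u\|^2$ being absorbed on the left and $\int_0^t\|\nabla^2\theta\|^2$ being controlled by \eqref{q-1} of Lemma \ref{2}; all convective and magnetic nonlinearities, after one integration by parts to remove any $\nabla^3 q$ or $\nabla^3 u$, are of the form $\mathcal{E}_1\mathcal{D}_1^2\le\delta_1\mathcal{D}_1^2$. The one genuinely recalcitrant contribution is the residual coupling between the undissipated $\nabla^2 q$ and the dissipation $\nabla^2\mbox{div}u$, together with the analogous piece from the nonlinear pressure $\nabla^2(q\theta)$: because $\|\nabla^2 q\|$ is \emph{not} small, it cannot be thrown into $\delta_1\mathcal{D}_1^2$, so I bound it by Young's inequality as $\varepsilon\|\nabla^2 q\|^2$ plus a controllable remainder, keeping $\varepsilon$ explicit. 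Integrating in time over $[0,t]$ and using $\|\nabla^2 q\|^2(0)+\|\nabla\mbox{div}u\|^2(0)\le C\mathcal{E}_1^2(0)$ then yields \eqref{red}.

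The main obstacle is precisely this top-order coupling. Since the density obeys a transport equation and enjoys no smoothing, $\|\nabla^2 q\|$ appears pointwise on the left with no matching space--time dissipation on the right, and the whole scheme works only because the $R\nabla q$ term of the momentum equation and the $\nabla\mbox{div}u$ term of the continuity equation generate exactly opposite cross terms that cancel at highest order. The surviving $\varepsilon\int_0^t\|\nabla^2 q\|^2$ is admissible only because $\|\nabla^2 q\|$ is itself one of the summands of $\mathcal{D}_1$, so that in the final assembly of Proposition \ref{prop} it will be absorbed by the genuine dissipation on the left once $\varepsilon$ is fixed small. A secondary technical point is that, the domain being unbounded and $\mbox{div}u$ lacking a boundary condition, every integration by parts must be arranged so that the Navier-slip and perfectly conducting boundary conditions force the boundary integrals to vanish or reduce them to already-controlled lower-order quantities.
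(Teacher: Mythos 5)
Your proposal is correct in its essentials and reproduces the paper's core mechanism: the twice spatially differentiated continuity equation tested against $R\nabla^2 q$ and the once differentiated momentum equation tested against $\partial_{x_i}\nabla\mbox{div}u$ generate exactly opposite cross terms $\pm R\int\nabla^2 q\cdot\nabla^2\mbox{div}u\,dx$, leaving $(2\mu+\lambda)\|\nabla^2\mbox{div}u\|^2$ as the dissipation, with the curl contribution absorbed by Cauchy--Schwarz against $\int_0^t\|\nabla \mbox{curl}^2u\|^2$ from Lemma \ref{7} and the $\nabla^3q$ piece of $\nabla^2\mbox{div}(qu)$ removed by integrating by parts with $u\cdot n|_{\partial\Omega}=0$. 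You do diverge on the temperature coupling: you estimate $R\int\nabla^2\theta\cdot\nabla^2\mbox{div}u\,dx$ by Young's inequality plus \eqref{q-1}, whereas the paper cancels it as well by testing $\partial_{x_i}\nabla(\ref{prob-linear})_1$ against $R(\partial_{x_i}\nabla q+\partial_{x_i}\nabla\theta)$; that choice creates the extra terms $R\frac{d}{dt}\int\nabla^2q\cdot\nabla^2\theta\,dx$ (handled at time $t$ via \eqref{2theta}) and $K_3=R\int\nabla^2q\cdot\nabla^2\theta_t\,dx$, and it is $K_3$, estimated by Young against $\|\nabla^2\theta_t\|^2$ and \eqref{2theta_t}, that is the true source of the $\varepsilon\int_0^t\|\nabla^2q\|^2$ in \eqref{red}. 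Your route is, if anything, slightly simpler and does not intrinsically force the $\varepsilon$ term; since the stated inequality is the weaker one, keeping it ``for safety'' is harmless.

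Two of your intermediate claims would not survive as written, though both have the fix the paper actually uses. First, testing $\partial_k(\rho(u_t+u\cdot\nabla u))$ against $\partial_k\nabla\mbox{div}u$ does not yield $\tfrac12\tfrac{d}{dt}\|\nabla\mbox{div}u\|^2$: the required integration by parts leaves the boundary integral of $\partial_k(\rho u_t)\cdot n\,\partial_k\mbox{div}u$, and $\partial_k u_t\cdot n$ does not vanish on $\partial\Omega$ (only $u_t\cdot n$ does), so unlike the first-order computation in Lemma \ref{2} no good-signed time derivative appears. The paper simply keeps $\nabla\big(\rho(u_t+u\cdot\nabla u)\big)$ on the right and bounds it by Cauchy--Schwarz, using that $\|\nabla u_t\|$ is part of $\mathcal{D}_1$ and its time integral is already controlled by Lemma \ref{6-1}. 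Second, the ``orthogonality up to boundary terms'' of $\int\partial_k\mbox{curl}^2u\cdot\partial_k\nabla\mbox{div}u\,dx$ also fails at this order for the same reason (the boundary term no longer vanishes), so only your fallback --- Cauchy--Schwarz against $C\|\nabla\mbox{curl}^2u\|^2$ and \eqref{7-2} --- is available, which is precisely the paper's estimate of $K_1$.
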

\begin{proof}
Taking $\partial_{x_i}$ to $(\ref{prob-linear})_2$, multiplying the resulting identities by $\partial_{x_i}\nabla\mbox{div}u$, integrating over $\Omega$, we obtain
\begin{equation*}
	\begin{split}
		&(2\mu+\lambda)\|\nabla^2\mbox{div}u\|^2-R\int_{\Omega}(\nabla^2q+\nabla^2\theta)\cdot \nabla^2\mbox{div}u dx\\[2mm]
&=-\mu \int_{\Omega}\nabla \mbox{curl}^2u\cdot \nabla^2\mbox{div}u dx+\int_{\Omega}\Big(\nabla (\rho(u_t+u\cdot\nabla u))+R\nabla^2(q\theta)-\nabla(\mbox{curl}H\times H\Big)\cdot \nabla^2\mbox{div}u dx.
\end{split}
\end{equation*}
To eliminate the singular terms on the left-hand side of the above, we
compute the  integral	
$$\int_{\Omega}\left\{\partial_{x_i}\nabla(\ref{prob-linear})_1\cdot R(\partial_{x_i}\nabla q+\partial_{x_i}\nabla\theta)\right\}dx$$ to obtain
\begin{equation*}
	\begin{split}
		&\frac{R}{2}\frac{d}{dt}\int_{\Omega}|\nabla^2 q|^2 dx+R\frac{d}{dt}\int_{\Omega}\nabla^2 q\cdot\nabla^2\theta dx +R\int_{\Omega}(\nabla^2q+\nabla^2\theta)\cdot \nabla^2\mbox{div}u dx\\[2mm]
&=R\int_{\Omega}\nabla^2 q\cdot\nabla^2\theta_t dx
-R\int_{\Omega}\nabla^2\mbox{div}(qu)\cdot(\nabla^2q+\nabla^2\theta)dx,
		\end{split}
\end{equation*}
Summing up the above  yields
\begin{equation}\label{right}
	\begin{split}
		&\frac{R}{2}\frac{d}{dt}\int_{\Omega}|\nabla^2 q|^2 dx +R\frac{d}{dt}\int_{\Omega}\nabla^2 q\cdot\nabla^2\theta dx +(2\mu+\lambda)\|\nabla^2\mbox{div}u\|^2\\[2mm]
&=-\mu\int_{\Omega}\nabla \mbox{curl}^2u\cdot \nabla^2\mbox{div}u dx+\int_{\Omega}\Big(\nabla (\rho(u_t+u\cdot\nabla u))+R\nabla^2(q\theta)-\nabla(\mbox{curl}H\times H)\Big)\cdot \nabla^2\mbox{div}udx\\[2mm]
&+R\int_{\Omega}\nabla^2 q\cdot\nabla^2\theta_t dx
-R\int_{\Omega}(\nabla^2q+\nabla^2\theta)\cdot\nabla^2\mbox{div}(qu) dx\\[2mm]
&\equiv K_1+K_2+K_3+K_4.
		\end{split}
\end{equation}
Using Cauchy's inequality,
\begin{equation}\label{K1}
K_1\leq \frac{2\mu+\lambda}{4}\|\nabla^2\mbox{div}u\|^2+C\|\nabla \mbox{curl}^2u\|^2,
\end{equation}
\begin{equation}\label{K2}
\begin{split}
K_2&\leq \frac{2\mu+\lambda}{4}\|\nabla^2\mbox{div}u\|^2+C\Big(\|\nabla (\rho(u_t+u\cdot\nabla u))\|^2+\|\nabla^2(q\theta)\|^2+\|\nabla(\mbox{curl}H\times H)\|^2\Big)\\[2mm]
&\leq \frac{2\mu+\lambda}{4}\|\nabla^2\mbox{div}u\|^2+\Big(\|(u,q,\theta,H)\|^2_{2}\Big) \Big(\|\nabla^2u\|^2+\|\nabla u_t\|^2+\|\nabla^2q\|^2+\|\nabla^2\theta\|^2+\|\nabla\mbox{curl}H\|^2\Big)\\[2mm]
&\leq\frac{2\mu+\lambda}{4}\|\nabla^2\mbox{div}u\|^2+ \delta\Big(\|\nabla^2u\|^2+\|\nabla u_t\|^2+\|\nabla^2q\|^2+\|\nabla^2\theta\|^2+\|\nabla\mbox{curl}H\|^2\Big),
\end{split}
\end{equation}
and
\begin{equation}\label{K3}
K_3\leq \varepsilon \|\nabla^2 q\|^2+C_{\varepsilon}\|\nabla^2\theta_t\|^2.
\end{equation}
The  term $K_4$ becomes
\begin{equation*}
\begin{split}
K_4=R\int_{\Omega}(\nabla^2q+\nabla^2\theta)\cdot \Big(\nabla^2q \mbox{div}u+2\nabla q\cdot\nabla\mbox{div}u+q\nabla^2\mbox{div}u
+\nabla^2u\cdot\nabla q+u\cdot\nabla^3q+2\nabla u\cdot\nabla^2q\Big)dx
\end{split}
\end{equation*}
The term  $\nabla^3q$ including on $K_4$, by using integration by parts with $u\cdot n|_{\partial\Omega}=0$, we obtain
\begin{equation*}
\begin{split}
&R\int_{\Omega}(\nabla^2q+\nabla^2\theta)u\cdot\nabla^3q dx\\[2mm]
&=-\frac{R}{2}\int_{\Omega}\mbox{div}u |\nabla^2q|^2 dx-R\int_{\Omega}\nabla^3\theta u\cdot\nabla^2q dx
-R\int_{\Omega}\mbox{div}u\nabla^2\theta\cdot \nabla^2q dx\\[2mm]
&\leq \Big(\|\mbox{div}u\|_{L^{\infty}}+\|u\|_{L^{\infty}}\Big)\cdot\Big(\|\nabla^2q\|^2+\|\nabla^2\theta\|^2+\|\nabla^3\theta\|^2\Big)\\[2mm]
&\leq \delta \Big(\|\nabla^2q\|^2+\|\nabla^2\theta\|^2+\|\nabla^3\theta\|^2\Big).
\end{split}
\end{equation*}
While the other terms on $K_4$ is controlled by $\delta \Big(\|\nabla^2q\|^2+\|\nabla^2\theta\|^2+\|\nabla^3u\|^2\Big)$. So
\begin{equation}\label{K4}
K_4\leq \delta \Big(\|\nabla^2q\|^2+\|\nabla^2\theta\|^2+\|\nabla^3\theta\|^2+\|\nabla^3u\|^2\Big).
\end{equation}
Putting estimates (\ref{K1})-(\ref{K4})into (\ref{right}) implies
\begin{equation}
	\begin{split}
		&\frac{R}{2}\frac{d}{dt}\int_{\Omega}|\nabla^2 q|^2 dx+R\frac{d}{dt}\int_{\Omega}\nabla^2 q\cdot\nabla^2\theta dx +\frac{2\mu+\lambda}{2}\|\nabla^2\mbox{div}u\|^2 \\[2mm]
&\leq C\|\nabla \mbox{curl}^2u\|^2+ \varepsilon\|\nabla^2 q\|^2+C_{\varepsilon}\|\nabla^2\theta_t\|^2+\delta_1 \Big(\|\nabla^2q\|^2+\|\nabla^2\theta\|^2+\|\nabla^3\theta\|^2+\|\nabla^3u\|^2\Big).
\end{split}
\end{equation}
 Integrating the above over $[0,t]$ yields
 \begin{equation*}
	\begin{split}
		&\frac{R}{4}\|\nabla^2 q\|^2 +\frac{2\mu+\lambda}{2}\int_0^t\|\nabla^2\mbox{div}u\|^2 ds
\leq C\|\nabla^2\theta\|^2+\varepsilon\int_0^t \|\nabla^2 q\|^2 ds\\[2mm]
&+ C\int_0^t\Big(\|\nabla \mbox{curl}^2u\|^2+\|\nabla^2\theta_t\|^2\Big)ds+
\delta_1 \int_0^t \Big(\|\nabla^2q\|^2+\|\nabla^2\theta\|^2+\|\nabla^3\theta\|^2+\|\nabla^3u\|^2\Big)ds,
\end{split}
\end{equation*}
 which together (\ref{2theta}), (\ref{2theta_t}) and (\ref{7-2}) implies the desired result (\ref{red}).

\end{proof}

At last, with  Lemma \ref{lem-1}-Lemma \ref{lem-9}  at hand, we can use Lemma \ref{lem-elliptic-1} to obtain the energy estimates of $\|(\nabla^3 u,\nabla^3 \theta, \nabla^3 H)\|$. Moreover,  we complete the proof of Proposition \ref{prop}  by adding Lemma \ref{lem-1}-Lemma \ref{lem-9},  and using equation $(\ref{prob-linear})_2$ to obtain the dissipation  estimates of $\|\nabla^2 u\|_{1}^2+\|\nabla q\|_1^2$ by choosing $\varepsilon$ small in Lemma
\ref{lem-9}.  Here, we omit the detail.

\section{Smooth solution }
In this section, we shall prove Theorem \ref{thm-2},  that is to  improve the regularity of the strong solutions in Theorem \ref{thm-1}.
Let
\begin{equation}\label{E2}
\mathcal{E}(t)=\|(u,\theta, H)\|_{4}+\|q\|_{3}+\|(q_t,u_t, H_t,\theta_t)\|_2+\|(q_{tt},u_{tt},\theta_{tt},H_{tt})\|
\end{equation}
and
\begin{equation}\label{D2}
\mathcal{D}(t)=\|(\nabla u,\nabla\theta,\nabla H)\|_{3}+\|\nabla q\|_{2}+\|(q_t,u_{t},H_{t})\|_{2}+\|\theta_{t}\|_{3}
+\|(u_{tt},\theta_{tt},H_{tt})\|_{1}+\|q_{tt}\|.
\end{equation}
We show a priori estimates of $H^4$ norm in the following.
\begin{prop}\label{prop-2} ({\bf a priori estimates of $H^4$ norm})
	Let $(q,u,\theta, H)$ be a solution to the initial boundary value problem (\ref{prob-linear}), (\ref{in}) and (\ref{boundary-linear})in  $t\in[0,T]$. Then there exists  positive constants $C$ and $\delta$ which are independent of $t$,  such that if
	\begin{equation*}
	\sup_{0\leq t\leq T}\mathcal{E}(t)\leq \delta,
	\end{equation*}
	then there holds, for any $t\in[0,T]$,
	\begin{equation*}
	\mathcal{E}^2(t)+C\int_{0}^{t}\mathcal{D}^2(s)ds\leq C\mathcal{E}^2(0).
	\end{equation*}
\end{prop}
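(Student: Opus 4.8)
The plan is to reproduce the scheme used for Proposition~\ref{prop} (Lemmas~\ref{lem-1}--\ref{lem-9}) one derivative higher, closing the energy inequality for the quantities $\mathcal{E}(t)$ and $\mathcal{D}(t)$ defined in \eqref{E2}--\eqref{D2}. Since $\sup_{[0,T]}\mathcal{E}(t)\le\delta$ controls in particular the lower-order energy of Section~3, all estimates there are available and already supply control of $\|(u,\theta,H)\|_3$, $\|q\|_2$, the first time derivatives, and their dissipation. What remains is to produce the additional top-order bounds: $\|\nabla^3(u,\theta,H)\|$ and $\|\nabla^3 q\|$ with matching $\nabla^4$-dissipation, the $H^2$-bounds on $(q_t,u_t,\theta_t,H_t)$ (and the $H^3$-bound on $\theta_t$), and the $L^2$-bounds on $(q_{tt},u_{tt},\theta_{tt},H_{tt})$ together with their dissipation $\|(u_{tt},\theta_{tt},H_{tt})\|_1+\|q_{tt}\|$. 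Throughout, every nonlinear remainder will be estimated by $\mathcal{E}(t)\mathcal{D}^2(t)\le C\delta\,\mathcal{D}^2(t)$, so that after integration in time it can be absorbed on the left once $\delta$ is small.

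First I would treat the higher spatial regularity. As in Lemmas~\ref{curl^2u}, \ref{3} and \ref{7}, I avoid differentiating in the normal direction and instead work with $\mbox{div}$ and $\mbox{curl}$ of the unknowns, whose boundary behaviour is fixed by \eqref{boundary-linear}. Applying $\mbox{curl}$ repeatedly to $(\ref{prob-linear})_2$ and $(\ref{prob-linear})_4$ and pairing the resulting equations with $\mbox{curl}^2$ of $\mbox{curl}^2 u$ and $\mbox{curl}^2 H$ yields energy identities for $\|\mbox{curl}^2 u\|,\ \|\mbox{curl}^2 H\|$ with dissipation $\|\mbox{curl}^3 u\|,\ \|\mbox{curl}^3 H\|$, one order beyond Lemma~\ref{7}. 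Combined with the $\mbox{div}\,u$ estimate from $(\ref{prob-linear})_2$ and the parabolic estimate for $\theta$, Proposition~\ref{prop-imp}, Lemma~\ref{lem-div-curl} and the elliptic estimates of Lemmas~\ref{lem-elliptic-1} and \ref{lem-elliptic-3} then upgrade these into the full gradient bounds $\|\nabla^3(u,\theta,H)\|$ and $\int_0^t\|\nabla^4(u,\theta,H)\|^2\,ds$. The second-time-derivative estimates are obtained exactly as in Lemmas~\ref{6-1}, \ref{lem-utt} and \ref{curl^2u_t}: differentiating \eqref{prob-linear} twice in $t$ (which preserves \eqref{boundary-linear}) and testing against $(q_{tt},u_{tt},\theta_{tt},H_{tt})$ gives the $L^2$-energy and the dissipation $\|(u_{tt},\theta_{tt},H_{tt})\|_1+\|q_{tt}\|$, while the $\mbox{curl}$ of the twice-differentiated momentum and magnetic equations yields $\|\mbox{curl}^2 u_{tt}\|,\ \|\mbox{curl}^2 H_{tt}\|$ and hence $\|\nabla^2(u_{tt},H_{tt})\|$.

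The main obstacle, as in Lemma~\ref{lem-9}, is the top-order density bound $\|\nabla^3 q\|$: the continuity equation $(\ref{prob-linear})_1$ is purely transport, so $q$ enjoys no direct dissipation, and $\|\nabla^3 q\|$ can only be recovered through its coupling with $\nabla^3\mbox{div}\,u$ in the momentum equation. I would apply $\nabla^2$ to $(\ref{prob-linear})_2$ and test against $\nabla^2\nabla\mbox{div}\,u$, and simultaneously apply $\nabla^3$ to $(\ref{prob-linear})_1$ and test against $R\nabla^3 q$ (and against $\nabla^3\theta$), so that the cross terms $R\int_\Omega \nabla^3 q\cdot\nabla^3\mbox{div}\,u\,dx$ cancel. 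The delicate contribution is the top-order transport term $R\int_\Omega (\nabla^3 q)\,u\cdot\nabla^4 q\,dx$, which I integrate by parts using $u\cdot n|_{\partial\Omega}=0$ to convert into $-\tfrac12\int_\Omega \mbox{div}\,u\,|\nabla^3 q|^2\,dx\le C\delta\|\nabla^3 q\|^2$, exactly as the term $\int_\Omega u\cdot\nabla^2 q_t\cdot\nabla q_t\,dx$ was handled in Lemma~\ref{lem-uxxt}. The remaining commutators are of lower order and bounded by $C\delta\,\mathcal{D}^2(t)$, while a residual $\varepsilon\int_0^t\|\nabla^3 q\|^2\,ds$ produced by the $\nabla^3\theta_t$ coupling is absorbed by choosing $\varepsilon$ small, and the accompanying $C_\varepsilon\int_0^t\|\nabla^3\theta_t\|^2\,ds$ is controlled by the one-order-higher analogue of \eqref{2theta_t} (recall $\|\theta_t\|_3$ is part of $\mathcal{D}(t)$, reflecting the full parabolic smoothing of $\theta$).

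Finally, I would sum all the weighted estimates together with the reused Section~3 bounds. The boundary integrals arising from integration by parts vanish thanks to $u\cdot n=\mbox{curl}\,u\times n=H\cdot n=\mbox{curl}\,H\times n=\partial_n\theta=0$, and every nonlinear term is of the form $C\delta\int_0^t\mathcal{D}^2(s)\,ds$. Choosing $\delta$ small enough to absorb these into the dissipation on the left, and invoking Proposition~\ref{prop-imp} and Lemmas~\ref{lem-div-curl}, \ref{lem-elliptic-1}, \ref{lem-elliptic-3} once more to turn the accumulated $\mbox{div}/\mbox{curl}$ and Laplacian controls into the full $H^4$ gradient norms entering $\mathcal{E}(t)$ and $\mathcal{D}(t)$, yields $\mathcal{E}^2(t)+C\int_0^t\mathcal{D}^2(s)\,ds\le C\mathcal{E}^2(0)$, as claimed.
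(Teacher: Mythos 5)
Your overall skeleton is right: reuse the $H^3$ theory of Proposition \ref{prop}, add an $L^2$ energy estimate for $(q_{tt},u_{tt},\theta_{tt},H_{tt})$ by differentiating the system twice in $t$, and recover $\|\nabla^3 q\|$ by pairing $\nabla^2$ of $(\ref{prob-linear})_2$ against $\nabla^3\mbox{div}u$ with $\nabla^3$ of $(\ref{prob-linear})_1$ against $R(\nabla^3 q+\nabla^3\theta)$ so that the cross terms cancel and the transport term $\int_\Omega(\nabla^3q+\nabla^3\theta)\,u\cdot\nabla^4q\,dx$ is integrated by parts using $u\cdot n|_{\partial\Omega}=0$; this last piece is exactly the paper's Lemma \ref{lem-q3}. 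The genuine gap is in your route to the top-order \emph{spatial} dissipation. You propose a curl-energy identity ``one order beyond Lemma \ref{7}'', pairing the (repeatedly curled) momentum and induction equations with $\mbox{curl}^2$ of $\mbox{curl}^2u$ and of $\mbox{curl}^2H$. As stated, the identity you claim to obtain (energy for $\|\mbox{curl}^2u\|$ with dissipation $\|\mbox{curl}^3u\|$) is literally Lemma \ref{7}, which does not supply the $\int_0^t\|\nabla^4(u,H)\|^2ds$ needed for $\mathcal{D}$; and the genuinely higher identity fails at the boundary: the data give only $w\times n|_{\partial\Omega}=0$, hence $\mbox{curl}w\cdot n|_{\partial\Omega}=0$ but \emph{not} $\mbox{curl}w\times n|_{\partial\Omega}=0$, so the second curl-type integration by parts leaves an uncontrolled boundary term of the form $\int_{\partial\Omega}\big(n\times\mbox{curl}(\rho w_t)\big)\cdot\mbox{curl}^2w\,dS$. (Lemma \ref{7} works precisely because only one such integration by parts is needed, costing only $w_t\times n=0$.) The paper performs \emph{no} new energy identity at fourth order: in Lemma \ref{lem4-3-1} it writes $\mbox{curl}^3u=\mbox{curl}^2w=-\Delta w$ (since $\mbox{div}w=0$), reads $\Delta w$ off equation (\ref{curl-equation-u}), bounds $\|\Delta w\|_1$ by $\|w_t\|_1$ plus nonlinear terms whose time integrals are already paid for by $\int_0^t\mathcal{D}_1^2\,ds\le C\mathcal{E}_1^2(0)$, and then applies Lemma \ref{lem-div-curl} with $\mbox{curl}^2u\cdot n|_{\partial\Omega}=0$ to get $\int_0^t\|\nabla^2\mbox{curl}^2u\|^2ds$; the bounds on $\int_0^t\|\nabla^4\theta\|^2ds$ and $\int_0^t\|\nabla^4H\|^2ds$ come the same way from $\kappa\Delta\theta=N$ and $\eta\Delta H=H_t-\mbox{curl}(u\times H)$. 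You need to replace your top-order energy identity by this elliptic bootstrap.

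Two smaller points. First, in closing the argument you list Proposition \ref{prop-imp} and Lemmas \ref{lem-div-curl}, \ref{lem-elliptic-1}, \ref{lem-elliptic-3}, but $\mathcal{D}$ contains $\|\nabla q\|_2$, and the only way to generate $\int_0^t\|\nabla^3q\|^2ds$ (and simultaneously $\int_0^t\|\nabla^4u\|^2ds$ without circularity) is the Stokes estimate of Lemma \ref{lem-elliptic-2} applied to $-\mu\Delta u+R\nabla q=f$, $\mbox{div}u=\mbox{div}u$, which gives $\int_0^t(\|\nabla^2u\|_2^2+\|\nabla q\|_2^2)ds\le C\mathcal{E}_1^2(0)+C_*\int_0^t\|\nabla^3\mbox{div}u\|^2ds$ and must be combined with Lemma \ref{lem-q3} by taking $\varepsilon C_*\le(2\mu+\lambda)/4$; you omit this lemma, so the loop is not closed. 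Second, some targets are off: the new energy norms required are $\|\nabla^4(u,\theta,H)\|$ and $\|\nabla^3q\|$ (not $\|\nabla^3(u,\theta,H)\|$, which already sits in $\mathcal{E}_1$), and neither $\mathcal{E}$ nor $\mathcal{D}$ contains $\|\nabla^2u_{tt}\|$ or $\|\nabla^2H_{tt}\|$, so taking the curl of the twice-$t$-differentiated equations is unnecessary and would meet the same boundary obstruction as above.
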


We shall divide the
proof of Proposition \ref{prop-2}  into several lemmas.
Firstly, with the results in Proposition \ref{prop} and  using elliptic estimates, one can obtain the following  dissipation estimates.

\begin{lem}\label{lem4-3-1}
\begin{equation}
\begin{split}
\int_{0}^{t}\|\nabla^2\mbox{curl}^2u\|^2ds
\leq C\mathcal{E}^2_1(0),
\end{split}
\end{equation}
\begin{equation}\label{theta4}
\begin{split}
\int_{0}^{t}\|\nabla^4\theta\|^2ds
\leq C\mathcal{E}^2_1(0),
\end{split}
\end{equation}
and
\begin{equation}\label{H4}
\begin{split}
\int_{0}^{t}\|\nabla^4H\|^2ds
\leq C\mathcal{E}^2_1(0).
\end{split}
\end{equation}

\end{lem}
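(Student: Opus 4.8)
The plan is to read these three as ``one order above $\mathcal{D}_1$'' dissipation bounds. Proposition \ref{prop} already gives $\int_0^t \mathcal{D}_1^2(s)\,ds \le C\mathcal{E}_1^2(0)$, and each quantity here is a fourth-order spatial object that I will reduce, via the elliptic and div-curl estimates of Section 2, to sources that either sit inside $\mathcal{D}_1$ or are genuinely small nonlinear remainders. Throughout, every nonlinear term I generate will be a product of a factor small in $L^\infty_x$ (bounded by $\mathcal{E}_1\le\delta_1$) and a factor already controlled in $L^2_tL^2$ by $\int_0^t\mathcal{D}_1^2\le C\mathcal{E}_1^2(0)$, so such terms contribute at most $C\delta_1\mathcal{E}_1^2(0)$.

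For the temperature I would apply Lemma \ref{lem-elliptic-3} with $s=2$, giving $\|\nabla^4\theta\|\le\|\nabla^2\theta\|_{H^2}\le C(\|\Delta\theta\|_{H^2}+\|\nabla\theta\|)$, and then use $(\ref{prob-linear})_3$ to write $\kappa\Delta\theta=c_v\rho(\theta_t+u\cdot\nabla\theta)+R\,\mbox{div}u-[\text{nonlinear}]$. The two leading linear pieces are $\|\theta_t\|_{H^2}$, which is part of $\mathcal{D}_1$, and $\|\mbox{div}u\|_{H^2}$, which is controlled because $\int_0^t\|\nabla\mbox{div}u\|^2$ and $\int_0^t\|\nabla^2\mbox{div}u\|^2$ were already obtained in Lemmas \ref{2} and \ref{lem-9}. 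The $H$-estimate is entirely parallel: apply the Laplace case of Lemma \ref{lem-elliptic-1} with $s=2$ (legitimate since $H\cdot n=0$ and $\mbox{curl}H\times n=0$) to get $\|\nabla^4H\|\le C(\|\Delta H\|_{H^2}+\|\nabla H\|)$, and replace $\eta\Delta H=H_t-\mbox{curl}(u\times H)$ using $(\ref{prob-linear})_4$, with $\|H_t\|_{H^2}$ belonging to $\mathcal{D}_1$ and $\mbox{curl}(u\times H)$ a small nonlinear remainder.

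The $u$-estimate is the delicate one, and I expect it to be the main obstacle: the full $\|\nabla^4u\|$ is \emph{not} controllable at this level, since isolating $\nabla^2u$ from $(\ref{prob-linear})_2$ forces $\nabla^3q$ (and $\nabla^3\mbox{div}u$), neither of which sits in $\mathcal{D}_1$. This is exactly why the statement asks only for $\nabla^2\mbox{curl}^2u$, where the pressure gradients are annihilated by the curl. Concretely I would set $P:=\mbox{curl}^2u$, which is divergence-free and satisfies $P\cdot n|_{\partial\Omega}=0$ (as recorded in Lemma \ref{7}), so Lemma \ref{lem-div-curl} with $s=2$ yields
\begin{equation*}
\|\nabla^2\mbox{curl}^2u\|\le\|P\|_{H^2}\le C\big(\|\mbox{curl}P\|_{H^1}+\|P\|\big)=C\big(\|\mbox{curl}^3u\|_{H^1}+\|\mbox{curl}^2u\|\big).
\end{equation*}
Here $\int_0^t\|\mbox{curl}^2u\|^2$ is controlled by \eqref{2H0}, while for $\mbox{curl}^3u$ I would use the vorticity equation \eqref{curl-equation-u}: with $w=\mbox{curl}u$ and $\mbox{div}w=0$ one gets $\mbox{curl}^3u=-\Delta w=\tfrac1\mu G$, where $G=-\rho(w_t+u\cdot\nabla w)+K+\mbox{curl}(\mbox{curl}H\times H)$. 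The only dangerous linear contribution to $\|G\|_{H^1}$ is $\|\rho w_t\|_{H^1}\lesssim\|u_t\|_{H^2}$, which lies in $\mathcal{D}_1$ (with $\nabla^2u_t$ supplied by Lemma \ref{curl^2u_t}), and the remaining terms are products of a small factor with an already-controlled third-order derivative.

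Collecting these bounds and integrating in time gives $\int_0^t\|\nabla^2\mbox{curl}^2u\|^2\,ds\le C\mathcal{E}_1^2(0)$, and the analogous but simpler computations give the $\theta$ and $H$ bounds. The genuine care is entirely in bookkeeping: tracking which fourth-order quantities are truly available (the curl structure is what makes $\nabla^2\mbox{curl}^2u$ accessible while $\nabla^4u$ is not), exploiting the PDEs to trade two spatial derivatives for a time derivative of one lower spatial order, and keeping every nonlinear remainder at the harmless size $C\delta_1\mathcal{E}_1^2(0)$.
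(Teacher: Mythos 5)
Your proposal is correct and follows essentially the same route as the paper: Lemma \ref{lem-div-curl} applied to $\mbox{curl}^2u$ (using $\mbox{curl}^2u\cdot n|_{\partial\Omega}=0$) to reduce to $\|\mbox{curl}^3u\|_{1}=\|\Delta w\|_{1}$, which is then read off from the vorticity equation \eqref{curl-equation-u} with $\|w_t\|_1\lesssim\|u_t\|_2\in\mathcal{D}_1$ as the leading term, together with the elliptic estimates of Lemmas \ref{lem-elliptic-1} and \ref{lem-elliptic-3} combined with $(\ref{prob-linear})_4$ and $(\ref{prob-linear})_3$ for $H$ and $\theta$. Your remark on why $\nabla^4u$ itself is inaccessible at this level is a correct reading of why the lemma is stated for $\nabla^2\mbox{curl}^2u$ only.
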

\begin{proof}
Applying Lemma \ref{lem-div-curl} with $v=\mbox{curl}^{2}u$,
\begin{equation}\label{u4}
\begin{split}
\|\nabla^2\mbox{curl}^2u\|^2&\leq \|\mbox{curl}^{3}u\|^2_{1}+\|\mbox{curl}^{2}u\|^2+|\mbox{curl}^{2}u\cdot n|_{H^{3/2}(\partial\Omega)}\\[2mm]
&\leq \|\mbox{curl}^{3}u\|^2_{1}+ \|\mbox{curl}^{2}u\|^2,
\end{split}
\end{equation}
thanks to $\mbox{curl}^{2}u\cdot n|_{\partial\Omega}=0$.
Moreover, noting that
\begin{equation*}
\|\mbox{curl}^{3}u\|^2_{1}=\|\mbox{curl}^{2}w\|^2_{1}=\|\Delta w\|^2_{1},
\end{equation*}
and using the equation of $w$ (\ref{curl-equation-u}), we obtain
\begin{equation*}
\begin{split}
\|\Delta w\|^2_{1}&\leq \|\rho w_{t}+\rho (u\cdot\nabla w-w\cdot\nabla u)+\nabla q\times (u_t+u\cdot\nabla u)+\rho w\mbox{div}u
-H\cdot\nabla \mbox{curl}H+\mbox{curl}H\cdot\nabla H\|^2_{1}\\[2mm]
&\leq C\|w_t\|_{1}^2+\Big(\|\nabla q\|_1^2+\|u\|_{3}^2+\|H\|_2^2\Big)\cdot\Big(\|\nabla w_t\|^2+\|\nabla w\|_1^2+\|u_t\|_{2}^2+\|\nabla u\|_2^2+\|\nabla H\|_{2}^2\Big).
\end{split}
\end{equation*}
Plugging the above back in (\ref{u4}), one gets
\begin{equation*}
\begin{split}
\int_{0}^{t}\|\nabla^2\mbox{curl}^2u\|^2ds
&\leq  C\int_{0}^{t}\Big(\|w_t\|_{1}^2+\|\mbox{curl}^{2}u\|^2\Big)ds+\delta\int_{0}^{t}\Big(\|\nabla w_t\|^2+\|\nabla w\|_1^2+\|u_t\|_{2}^2+\|\nabla u\|_2^2+\|\nabla H\|_{2}^2\Big)ds\\[2mm]
&\leq (C+\delta)\int_{0}^{t}\mathcal{D}^2_1(s)ds
\leq C\mathcal{E}^2_1(0).
\end{split}
\end{equation*}
For  magnetic field $H$, applying Lemma \ref{lem-elliptic-1} and using equation $(\ref{prob-linear})_{4}$, we get
\begin{equation*}
\begin{split}
\int_{0}^{t}\|\nabla^4H\|^2ds&\leq\int_{0}^{t}\Big(\|\Delta H\|_2^2+ \|\nabla H\|^2\Big)ds\\[2mm]
&\leq \int_{0}^{t}\Big(\| H_t\|_2^2+ \|\mbox{curl}(u\times H)\|_2^2+\|\nabla H\|^2\Big)ds\\[2mm]
&\leq \int_{0}^{t}\Big(\| H_t\|_2^2+ \| H\|^2_{2}\|\nabla u\|^2_{2}+\| u\|^2_{2}\|\nabla H\|^2_{2}+\|\nabla H\|^2\Big)ds\\[2mm]
&\leq (C+\delta_1) \int_{0}^{t}\mathcal{D}^2_1(s)ds\leq C\mathcal{E}^2_1(0).
\end{split}
\end{equation*}

Finally, we estimate the four-order derivatives of $\theta$. Rewrite equation $(\ref{prob-linear})_2$ as
\begin{equation}\label{N}
\kappa\Delta\theta=c_{v}\rho(\theta_{t}+ u\cdot\nabla\theta)+R\mbox{div}u-\lambda(\mbox{div}u)^2-2\mu |S(u)|^2+R(\rho\theta+q)\mbox{div}u-\eta|\mbox{curl}H|^2\equiv N,
\end{equation}
then, applying the elliptic estimates  of $\theta$, $\theta_t$ (Lemma \ref{lem-elliptic-3}), it is easy to obtain the estimates of $\|\nabla^4\theta\|_{L^{2}_t(L^2)}$ and $\|\nabla^3\theta_t\|_{L^{2}_t(L^2)}$.  Precisely,
\begin{equation*}
\begin{split}
\int_{0}^{t}\|\nabla^4\theta\|^2ds&\leq \int_{0}^{t}\|N\|^2_{2}ds+\int_{0}^{t}\|\nabla \theta\|^2ds\\[2mm]
&\leq (C+\delta) \int_{0}^{t}\mathcal{D}^2_1(s)ds\leq C\mathcal{E}^2_1(0).
\end{split}
\end{equation*}

The proof of Lemma \ref{lem4-3-1} is completed.
\end{proof}

Next,  we prove estimates of the second-order temporal derivatives $(q_{tt},u_{tt},\theta_{tt},H_{tt})$.

\begin{lem}\label{lem-uxtt}
	\begin{equation}\label{uxtt}
	\|(q_{tt},u_{tt},\theta_{tt},H_{tt})\|^2+c\int^t_0\|(\nabla u_{tt},\nabla\theta_{tt},\nabla H_{tt})\|^2ds
\leq C\mathcal{E}^2(0).
\end{equation}	
where $c>0$, $C>0$ are positive constants independent of $t$.	
Moreover,
\begin{equation}\label{Ht}
\|(u_t,\theta_t,H_t)\|^2_{2}\leq C\mathcal{E}^2(0),
\end{equation}
and
\begin{equation}\label{theta_t}
\begin{split}
\int_{0}^{t}\|\nabla^3\theta_t\|^2ds
\leq C\mathcal{E}^2(0).
\end{split}
\end{equation}

\end{lem}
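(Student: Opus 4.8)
The plan is to prove the three estimates in order: first obtain \eqref{uxtt} by a second-order temporal energy method modeled on Lemma \ref{6-1}, and then deduce \eqref{Ht} and \eqref{theta_t} from \eqref{uxtt} through the elliptic regularity lemmas of Section 2.

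For \eqref{uxtt} I would differentiate the system \eqref{prob-linear} twice in $t$; since this preserves the boundary conditions \eqref{boundary-linear}, in particular $u_{tt}\cdot n=0$, $\mbox{curl}\,u_{tt}\times n=0$, $H_{tt}\cdot n=0$, $\mbox{curl}\,H_{tt}\times n=0$ and $\partial_n\theta_{tt}=0$ on $\partial\Omega$, I compute the combined integral
$$\int_{\Omega}\Big\{\partial_t^2(\ref{prob-linear})_1\,Rq_{tt}+\partial_t^2(\ref{prob-linear})_2\cdot u_{tt}+\partial_t^2(\ref{prob-linear})_3\,\theta_{tt}+\partial_t^2(\ref{prob-linear})_4\cdot H_{tt}\Big\}dx.$$
Integrating by parts and using $\Delta=-\mbox{curl}^2+\nabla\mbox{div}$ as in Lemma \ref{lem-1}, the cross terms $R\int\nabla q_{tt}\cdot u_{tt}=-R\int q_{tt}\mbox{div}\,u_{tt}$ and $R\int\nabla\theta_{tt}\cdot u_{tt}=-R\int\theta_{tt}\mbox{div}\,u_{tt}$ cancel against the corresponding contributions of the continuity and temperature equations, leaving an identity of the form
$$\frac{1}{2}\frac{d}{dt}\int_\Omega\big(Rq_{tt}^2+\rho|u_{tt}|^2+c_{v}\rho\theta_{tt}^2+|H_{tt}|^2\big)dx+\mu\|\mbox{curl}\,u_{tt}\|^2+(2\mu+\lambda)\|\mbox{div}\,u_{tt}\|^2+\kappa\|\nabla\theta_{tt}\|^2+\eta\|\mbox{curl}\,H_{tt}\|^2=\mathcal{R},$$
where $\mathcal{R}$ collects the second time derivatives of all nonlinear terms. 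Applying Proposition \ref{prop-imp} to $u_{tt}$ and $H_{tt}$ converts the curl/div dissipation into $\|\nabla u_{tt}\|^2$ and $\|\nabla H_{tt}\|^2$, so that integrating over $[0,t]$ and absorbing $\mathcal{R}$ yields \eqref{uxtt}.

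The estimate $\mathcal{R}\le C\mathcal{E}(t)\mathcal{D}^2(t)$ is the main obstacle. The terms arise from $\partial_t^2$ of $\mbox{div}(qu)$, $\rho\,u\cdot\nabla u$, $R\nabla(q\theta)$, $\mbox{curl}H\times H$, the production $\lambda(\mbox{div}u)^2+2\mu|S(u)|^2+\eta|\mbox{curl}H|^2$, and $\mbox{curl}(u\times H)$; the most delicate are those carrying $q_{tt}$ or $u_{tt}$ at top order, such as $\int q_{tt}\,u_t\cdot u_{tt}$ or $\int \rho_{tt}\,(u\cdot\nabla u)\cdot u_{tt}$. Since $\mathcal{D}(t)$ in \eqref{D2} contains $q_{tt}$ only at the $L^2$ level with no gradient, every factor $q_{tt}$ must be placed in $L^2$ and paired with factors measured in $L^\infty$ or $L^6$ through Lemma \ref{sobolev inequ}, while the remaining highest derivative is taken in the dissipation norm; the smallness $\sup_t\mathcal{E}(t)\le\delta$ then renders all such products $\le C\delta\,\mathcal{D}^2(t)$, plus integrable lower-order pieces already controlled by Proposition \ref{prop}. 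This bookkeeping of which quantities are bounded pointwise (those in $\mathcal{E}$) versus only in $L^2_t$ (those in $\mathcal{D}$) is where the bulk of the work lies.

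Finally, \eqref{Ht} and \eqref{theta_t} follow from \eqref{uxtt} by elliptic regularity. Writing $\partial_t(\ref{prob-linear})_2$ as a Lam\'e system for $u_t$ with right-hand side $F=-\rho u_{tt}-\rho_t(u_t+u\cdot\nabla u)-\rho(u\cdot\nabla u)_t-R\nabla q_t-R\nabla\theta_t-R\nabla(q\theta)_t+(\mbox{curl}H\times H)_t$, Lemma \ref{lem-elliptic-1} gives $\|\nabla^2u_t\|\le C(\|F\|+\|\nabla u_t\|)$, where $\|F\|$ is bounded by $\|u_{tt}\|$ from \eqref{uxtt} together with terms already controlled by Proposition \ref{prop}. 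Likewise, rewriting $\partial_t(\ref{prob-linear})_3$ and $\partial_t(\ref{prob-linear})_4$ to express $\Delta\theta_t$ and $\Delta H_t$ through $\theta_{tt}$, $H_{tt}$ and lower-order data, Lemmas \ref{lem-elliptic-3} and \ref{lem-elliptic-1} yield the $H^2$ bounds of $\theta_t$ and $H_t$, establishing \eqref{Ht}. For \eqref{theta_t} I apply Lemma \ref{lem-elliptic-3} with $s=1$ to $\theta_t$, so that $\|\nabla^3\theta_t\|\le C(\|\Delta\theta_t\|_1+\|\nabla\theta_t\|)$; expressing $\kappa\Delta\theta_t$ through the temperature equation reduces $\|\Delta\theta_t\|_1$ to $\|\theta_{tt}\|_1$ plus small nonlinear terms, and integrating in time while invoking the dissipation $\int_0^t\|\nabla\theta_{tt}\|^2ds$ from \eqref{uxtt} together with Proposition \ref{prop} gives \eqref{theta_t}.
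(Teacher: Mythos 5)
Your proposal follows essentially the same route as the paper: the identical twice-differentiated energy identity tested against $(Rq_{tt},u_{tt},\theta_{tt},H_{tt})$, closed via Proposition \ref{prop-imp} applied to $u_{tt}$ and $H_{tt}$, with the same integration-by-parts devices to avoid $\nabla q_{tt}$, followed by the same elliptic-regularity deductions of \eqref{Ht} and \eqref{theta_t} from the time-differentiated equations. The one refinement worth noting is that the paper bounds the nonlinear remainder by $C\delta_1\mathcal{D}_1^2(t)$ --- the lower-order dissipation already closed by Proposition \ref{prop} --- rather than by $C\delta\,\mathcal{D}^2(t)$ as you state, which is what yields the clean right-hand side $C\mathcal{E}^2(0)$ immediately instead of leaving a $\delta\int_0^t\mathcal{D}^2\,ds$ term to be absorbed only at the end of Proposition \ref{prop-2}.
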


\begin{proof}
Computing the following integral
\begin{equation*}
\int_{\Omega}\left\{\partial_{tt}(\ref{prob-linear})_1Rq_{tt}+\partial_{tt}(\ref{prob-linear})_2\cdot u_{tt}+\partial_{tt}(\ref{prob-linear})_3\theta_{tt}+\partial_{tt}(\ref{prob-linear})_4\cdot H_{tt}\right\} dx,
\end{equation*}
and noting that  differentiation of the system (\ref{prob-linear}) with respect to $t$ will keep the boundary conditions (\ref{boundary-linear}),
after integrating by parts, one has
	\begin{equation}\label{tt}
\begin{split}
&\frac{1}{2}\frac{d}{dt}\int_{\Omega} \Big(Rq_{tt}^2+\rho|u_{tt}|^2+c_{v}\rho\theta_{tt}^2+|H_{tt}|^2\Big)dx
+\mu\|\mbox{curl}u_{tt}\|^2+(2\mu+\lambda)\|\mbox{div}u_{tt}\|^2+\kappa\|\nabla\theta_{tt}\|^2+\eta\|\mbox{curl}H_{tt}\|^2\\[2mm]
&=-R\int_{\Omega}(\mbox{div}(qu))_{tt}q_{tt}dx-\int_{\Omega}\Big(q_{tt}(u_t+u\cdot \nabla u)+2q_{t}(u_t+u\cdot \nabla u)_{t}-\rho u_{tt}\cdot\nabla u+
2\rho u_{t}\cdot\nabla u_{t}\Big)\cdot u_{tt}dx\\[2mm]
&-R\int_{\Omega}(\nabla (q\theta))_{tt}\cdot u_{tt}dx+\int_{\Omega}(\mbox{curl}H\times H)_{tt}\cdot u_{tt}dx\\[2mm]
&-c_{v}\int_{\Omega}\Big(q_{tt}(\theta_t+u\cdot \nabla \theta)+2q_{t}(\theta_t+u\cdot \nabla \theta)_{t}-\rho u_{tt}\cdot\nabla \theta+
2\rho \theta_{t}\cdot\nabla \theta_{t}\Big) \theta_{tt}dx\\[2mm]
&+\int_{\Omega}\Big(\lambda(\mbox{div}u)^2+2\mu |S(u)|^2-R(\rho\theta+q)\mbox{div}u+\eta|\mbox{curl}H|^2\Big)_{tt} \theta _{tt}dx\\[2mm]
&+\int_{\Omega}(\mbox{curl}(u\times H))_{tt}\cdot H_{tt}dx.
\end{split}
\end{equation}
The term involving third-order derivations of $q$ can be estimated as follows:
\begin{equation}
\begin{split}
&-R\int_{\Omega}(\nabla (q\theta))_{tt}\cdot u_{tt}dx=R\int_{\Omega}(q\theta)_{tt}\cdot \mbox{div}u_{tt}dx\\[2mm]
&\leq \frac{2\mu+\lambda}{4}\|\mbox{div}u_{tt}\|^2+C\|(q\theta)_{tt}\|^2\\[2mm]
&\leq  \frac{2\mu+\lambda}{4}\|\mbox{div}u_{tt}\|^2+ \Big(\|(\theta,q)\|_2+\|q_t\|_1\Big)\Big(\|q_{tt}\|^2+\|\theta_{tt}\|^2+\|\nabla \theta_{t}\|^2\Big)\\[2mm]
&\leq  \frac{2\mu+\lambda}{4}\|\mbox{div}u_{tt}\|^2+C\delta_1 \mathcal{D}^2_1(t),
\end{split}
\end{equation}
and
\begin{equation}
\begin{split}
&\int_{\Omega}(\mbox{div}(qu))_{tt}q_{tt}dx=\int_{\Omega}u\cdot \nabla q_{tt} q_{tt}dx+\int_{\Omega}(\mbox{div}u_{tt}q+\mbox{div}u_{t}q_t+\mbox{div}uq_{tt})q_{tt}dx\\[2mm]
&=-\frac{1}{2}\int_{\Omega}\mbox{div}u q_{tt}dx+\int_{\Omega}(\mbox{div}u_{tt}q+\mbox{div}u_{t}q_t+\mbox{div}uq_{tt})q_{tt}dx\\[2mm]
&\leq \frac{2\mu+\lambda}{4}\|\mbox{div}u_{tt}\|^2+ \Big(\|\mbox{div}u\|_{L^{\infty}}+\|q\|_{L^{\infty}}+\|q_t\|_{L^{3}}\Big)\Big(\|\mbox{div}u_{tt}\|^2+\|\nabla\mbox{div}u_t\|^2+\|q_{tt}\|^2\Big)\\[2mm]
&\leq \frac{2\mu+\lambda}{4}\|\mbox{div}u_{tt}\|^2+C\delta_1 \mathcal{D}^2_1(t).
\end{split}
\end{equation}
Noting that \begin{equation*}
(u\times H)\times n|_{\partial \Omega}=0,
\end{equation*}
due to the vector identity $(u\times H)\times n=(u\cdot n) H-(H\cdot n) u$ and $u\cdot n|_{\partial\Omega}=0$, $H\times n|_{\partial\Omega}=0$, and moreover,  differentiation  with respect  $t$ will keep the boundary conditions, it holds
\begin{equation*}
(u\times H)_{tt}\times n|_{\partial \Omega}=0.
\end{equation*}
Then the last term on the right hand-side on (\ref{tt}) becomes
\begin{equation}
\begin{split}
\int_{\Omega}(\mbox{curl}(u\times H))_{tt}\cdot H_{tt}dx&=\int_{\Omega}(u\times H)_{tt}\cdot\mbox{curl} H_{tt}dx \\[2mm]
&\leq \frac{\eta}{4}\|\mbox{curl} H_{tt}\|^2+C\|(u\times H)_{tt}\|^2\\[2mm]
&\leq \frac{\eta}{4}\|\mbox{curl} H_{tt}\|^2+C\delta_1\mathcal{D}^2_1(t).
\end{split}
\end{equation}
The other terms on the right hand-side on (\ref{tt}) can be estimated by $\delta_1\mathcal{D}^2_1(t)$ by using Holder inequality and Sobolev inequalities directly. Therefore,  (\ref{tt}) yields
\begin{equation}\label{tt-1}
\begin{split}
&\frac{1}{2}\frac{d}{dt}\int_{\Omega} \Big(Rq_{tt}^2+\rho|u_{tt}|^2+c_{v}\rho\theta_{tt}^2+|H_{tt}|^2\Big)dx\\[2mm]
&+\mu\|\mbox{curl}u_{tt}\|^2+\frac{2\mu+\lambda}{2}\|\mbox{div}u_{tt}\|^2+\kappa\|\nabla\theta_{tt}\|^2+\frac{\eta}{2}\|\mbox{curl}H_{tt}\|^2\\[2mm]
&\leq C\delta_1\mathcal{D}^2_1(t).
\end{split}
\end{equation}
Finally,  integrating  (\ref{tt-1}) over $[0,t]$ and applying Proposition \ref{prop-imp} to $u_{tt}$ and $H_{tt}$, we obtain
\begin{equation*}
\begin{split}
	&\|(q_{tt},u_{tt},\theta_{tt},H_{tt})\|^2+c_{11}\int^t_0\|(\nabla u_{tt},\nabla\theta_{tt},\nabla H_{tt})\|^2ds\\[2mm]
&\leq C\|(q_{tt},u_{tt},\theta_{tt},H_{tt})(0)\|^2+ C\delta_1\int^t_0\mathcal{D}^2_1(s)ds\\[2mm]
&\leq C\mathcal{E}^2(0),
\end{split}
\end{equation*}
where we have used Proposition \ref{prop} in the last inequality.

Furthermore, applying Lemma \ref{lem-elliptic-1} to $u_t$, $\theta_t$ and $H_t$, it is easy to see (\ref{Ht}).

Moreover, recalling (\ref{N}), we see that
\begin{equation*}
\begin{split}
\int_{0}^{t}\|\nabla^3\theta_t\|^2ds&\leq \int_{0}^{t}\|N_t\|^2_{1}ds+\int_{0}^{t}\|\nabla \theta_t\|^2ds\\[2mm]
&\leq  \int_{0}^{t}\|\nabla \theta_{tt}\|^2ds+(C+\delta) \int_{0}^{t}\mathcal{D}^2_1(s)ds\\[2mm]
&\leq C\mathcal{E}^2(0).
\end{split}
\end{equation*}
This complete the proof of Lemma \ref{lem-uxtt}.
\end{proof}

In order to obtain the estimate of $\|\nabla^2q_t\|_{L_t^2(L^2)}$,  we use the equation $q_t+\mbox{div}u+\mbox{div}(qu)=0$, we shall prove
$\|\nabla^3 q\|_{L_t^2(L^2)}$ and
$\|\nabla^{2}\mbox{div}u\|_{L_t^2(L^2)}$ (see Lemma \ref{lem-9}). To obtain $\|\nabla^3 q\|_{L_t^2(L^2)}$,  by using the elliptic estimate of Stokes-type equation $(\ref{prob-linear})_{2}$. However,  applying Lemma \ref{lem-elliptic-2} to control $\|\nabla^3 q\|_{L_t^2(L^2)}$, we shall estimate $\|\mbox{div}u\|_{L_t^{2}(H^{3})}$ firstly.

\begin{lem}\label{lem-q3}

\begin{equation}\label{q3}
	\begin{split}
		\frac{R}{4}\|\nabla^3 q\|^2+\frac{2\mu+\lambda}{2}\int_0^t\|\nabla^3\mbox{div}u\|^2ds
\leq C\mathcal{E}^2(0)+\varepsilon\int_{0}^t\|\nabla^3 q\|^2ds
+\delta \int_0^t\Big(\|\nabla^3q\|^2+\|\nabla^4u\|^2\Big)ds.
\end{split}
\end{equation}

\end{lem}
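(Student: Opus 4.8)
The plan is to mirror the proof of Lemma \ref{lem-9} one differentiation order higher, so that the good dissipation $(2\mu+\lambda)\|\nabla^3\mbox{div}u\|^2$ is produced while the spatial norm $\|\nabla^3q\|^2$ is tracked through a time derivative. First I would apply $\partial_{x_i}\partial_{x_j}$ to the momentum equation $(\ref{prob-linear})_2$, multiply by $\partial_{x_i}\partial_{x_j}\nabla\mbox{div}u$, integrate over $\Omega$ and sum over $i,j$. Using $-\mu\Delta u-(\mu+\lambda)\nabla\mbox{div}u=\mu\mbox{curl}^2u-(2\mu+\lambda)\nabla\mbox{div}u$, the principal part yields the dissipation $(2\mu+\lambda)\|\nabla^3\mbox{div}u\|^2$ together with the curl contribution $-\mu\int_\Omega\nabla^2\mbox{curl}^2u\cdot\nabla^3\mbox{div}u\,dx$, the pressure-temperature coupling produces the singular cross term $-R\int_\Omega(\nabla^3q+\nabla^3\theta)\cdot\nabla^3\mbox{div}u\,dx$, and the remaining nonlinear pieces $\nabla^2(\rho(u_t+u\cdot\nabla u))$, $R\nabla^3(q\theta)$ and $\nabla^2(\mbox{curl}H\times H)$ are, after Cauchy's inequality, absorbed into a fraction of $\|\nabla^3\mbox{div}u\|^2$ plus time integrals either of $H^3$-regularity order (hence bounded by $C\mathcal{E}^2(0)$ through Proposition \ref{prop}) or kept as the genuinely new top-order quantities $\delta\|\nabla^3q\|^2$ and $\delta\|\nabla^4u\|^2$.

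Next, to remove the singular cross term I would differentiate the continuity equation $(\ref{prob-linear})_1$ by $\partial_{x_i}\partial_{x_j}\nabla$, test against $R(\partial_{x_i}\partial_{x_j}\nabla q+\partial_{x_i}\partial_{x_j}\nabla\theta)$ and sum. The term $R\int_\Omega\nabla^3\mbox{div}u\cdot(\nabla^3q+\nabla^3\theta)\,dx$ generated here cancels exactly the cross term from the first step, while $R\int_\Omega\nabla^3q_t\cdot\nabla^3q\,dx=\tfrac{R}{2}\tfrac{d}{dt}\|\nabla^3q\|^2$ and $R\int_\Omega\nabla^3q_t\cdot\nabla^3\theta\,dx=R\tfrac{d}{dt}\int_\Omega\nabla^3q\cdot\nabla^3\theta\,dx-R\int_\Omega\nabla^3q\cdot\nabla^3\theta_t\,dx$ supply the time-derivative structure. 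Collecting everything exactly as in (\ref{right}), I am then left to bound the curl term $K_1=-\mu\int_\Omega\nabla^2\mbox{curl}^2u\cdot\nabla^3\mbox{div}u\,dx$, the coupling term $K_3=R\int_\Omega\nabla^3q\cdot\nabla^3\theta_t\,dx$, and the nonlinear term $K_4=-R\int_\Omega(\nabla^3q+\nabla^3\theta)\cdot\nabla^3\mbox{div}(qu)\,dx$.

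For $K_1$, Cauchy's inequality gives $K_1\le\tfrac{2\mu+\lambda}{4}\|\nabla^3\mbox{div}u\|^2+C\|\nabla^2\mbox{curl}^2u\|^2$, and $\int_0^t\|\nabla^2\mbox{curl}^2u\|^2ds\le C\mathcal{E}_1^2(0)$ by Lemma \ref{lem4-3-1}. For $K_3$ I would use Young's inequality as $K_3\le\varepsilon\|\nabla^3q\|^2+C_\varepsilon\|\nabla^3\theta_t\|^2$, retaining the $\varepsilon\|\nabla^3q\|^2$ for later absorption (as in Lemma \ref{lem-9}) and controlling $\int_0^t\|\nabla^3\theta_t\|^2ds\le C\mathcal{E}^2(0)$ by (\ref{theta_t}) of Lemma \ref{lem-uxtt}. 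The main obstacle is $K_4$: expanding $\nabla^3\mbox{div}(qu)$, the top-order contribution is $u\cdot\nabla^4q$, and its control requires the boundary condition $u\cdot n|_{\partial\Omega}=0$. Paired with $\nabla^3q$ I would integrate by parts to get $\int_\Omega(u\cdot\nabla^4q)\cdot\nabla^3q\,dx=-\tfrac12\int_\Omega\mbox{div}u\,|\nabla^3q|^2\,dx\le\delta\|\nabla^3q\|^2$; paired with $\nabla^3\theta$ the same integration by parts shifts a derivative onto $\theta$, producing a term controlled by $\|u\|_{L^\infty}\|\nabla^4\theta\|\|\nabla^3q\|$, whose time integral is handled by (\ref{theta4}) of Lemma \ref{lem4-3-1}. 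The lower-order pieces of $\nabla^3\mbox{div}(qu)$, including the factor $q\,\nabla^3\mbox{div}u$, are bounded after Cauchy's inequality by $\delta(\|\nabla^3q\|^2+\|\nabla^4u\|^2)$, which accounts for the appearance of $\|\nabla^4u\|^2$ in the statement. Summing the three displays, integrating over $[0,t]$, and invoking Proposition \ref{prop} together with Lemma \ref{lem4-3-1} and Lemma \ref{lem-uxtt} to dispose of all the fully dissipated time integrals then yields (\ref{q3}).
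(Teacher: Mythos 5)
Your proposal reproduces the paper's argument essentially verbatim: the same double spatial differentiation of the momentum equation tested against $\partial_{x_i}\partial_{x_j}\nabla\mbox{div}u$, the same cancellation of the cross term $R\int_\Omega(\nabla^3q+\nabla^3\theta)\cdot\nabla^3\mbox{div}u\,dx$ via the thrice-differentiated continuity equation, the same four-term decomposition with Cauchy's inequality, and the same integration by parts using $u\cdot n|_{\partial\Omega}=0$ to handle $u\cdot\nabla^4q$ in the convective term. The supporting references (Lemma \ref{lem4-3-1} for $\nabla^2\mbox{curl}^2u$ and $\nabla^4\theta$, and the dissipation of $\nabla^3\theta_t$) match the paper's, so the proof is correct and follows the same route.
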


\begin{proof}
Taking $\partial_{x_i}\partial_{x_j}$ to $(\ref{prob-linear})_2$, multiplying the resulting identities by $\partial_{x_i}\partial_{x_j}\nabla\mbox{div}u$, integrating over $\Omega$, we obtain
\begin{equation*}
	\begin{split}
		&(2\mu+\lambda)\|\nabla^3\mbox{div}u\|^2-R\int_{\Omega}(\nabla^3q+\nabla^3\theta)\cdot \nabla^3\mbox{div}u dx\\[2mm]
&=-\mu \int_{\Omega}\nabla^2 \mbox{curl}^2u\cdot \nabla^3\mbox{div}u dx+\int_{\Omega}\Big(\nabla^2 (\rho(u_t+u\cdot\nabla u))+R\nabla^2(q\theta)-\nabla(H\cdot\nabla H-\frac{1}{2}\nabla|H|^2)\Big)\cdot \nabla^3\mbox{div}u dx
\end{split}
\end{equation*}
Computing the  integral	
$\int_{\Omega}\left\{\partial_{x_i}\partial_{x_j}\nabla(\ref{prob-linear})_1\cdot R(\partial_{x_i}\partial_{x_j}\nabla q+\partial_{x_i}\partial_{x_j}\nabla\theta)\right\}dx$, one has
\begin{equation*}
	\begin{split}
		&\frac{R}{2}\frac{d}{dt}\int_{\Omega}|\nabla^3 q|^2dx+R\frac{d}{dt}\int_{\Omega}\nabla^3 q\cdot\nabla^3\theta  dx +R\int_{\Omega}(\nabla^3q+\nabla^3\theta)\cdot \nabla^3\mbox{div}u dx\\[2mm]
&=R\int_{\Omega}\nabla^3 q\cdot\nabla^3\theta_t dx
-R\int_{\Omega}\nabla^3\mbox{div}(qu)\cdot(\nabla^3q+\nabla^3\theta)dx,
		\end{split}
\end{equation*}
Summing up the above  yields
\begin{equation}\label{right-q3}
	\begin{split}
		&\frac{R}{2}\frac{d}{dt}\int_{\Omega}|\nabla^3 q|^2 dx+R\frac{d}{dt}\int_{\Omega}\nabla^3 q\cdot\nabla^3\theta dx +(2\mu+\lambda)\|\nabla^3\mbox{div}u\|^2\\[2mm]
&=-\mu\int_{\Omega}\nabla^2 \mbox{curl}^2u\cdot \nabla^3\mbox{div}u dx+\int_{\Omega}\Big(\nabla^2 (\rho(u_t+u\cdot\nabla u))+R\nabla^3(q\theta)-\nabla^2(\mbox{curl}H\times H)\Big)\cdot \nabla^3\mbox{div}u dx\\[2mm]
&+R\int_{\Omega}\nabla^3 q\cdot\nabla^3\theta_t dx
-R\int_{\Omega}(\nabla^3q+\nabla^3\theta)\cdot\nabla^3\mbox{div}(qu) dx\equiv J_1+J_2+J_3+J_4.
		\end{split}
\end{equation}

Using Cauchy's inequality,
\begin{equation}\label{J1}
J_1\leq \frac{2\mu+\lambda}{4}\|\nabla^3\mbox{div}u\|^2+C\|\nabla^2 \mbox{curl}^2u\|^2,
\end{equation}

\begin{equation}\label{J2}
\begin{split}
J_2&\leq \frac{2\mu+\lambda}{4}\|\nabla^3\mbox{div}u\|^2+C\Big(\|\nabla^2 (\rho(u_t+u\cdot\nabla u))\|^2+\|\nabla^3(q\theta)\|^2+\|\nabla^2(\mbox{curl}H\times H)\|^2\Big)\\[2mm]
&\leq \frac{2\mu+\lambda}{4}\|\nabla^3\mbox{div}u\|^2+ \delta\Big(\|\nabla q\|_1^2+\|\nabla^2 u_t\|^2+\|\nabla ^2u\|_1^2+\|\nabla^3q\|^2+\|\nabla^3\theta\|^2+\|\nabla^3H\|^2\Big)
 \end{split}
\end{equation}
and
\begin{equation}\label{J3}
J_3\leq \varepsilon \|\nabla^3 q\|^2+C_{\varepsilon}\|\nabla^3\theta_t\|^2.
\end{equation}
The  term $J_4$ becomes
\begin{equation*}
\begin{split}
J_4=R\int_{\Omega}(\nabla^3q+\nabla^3\theta)\cdot \Big(&\nabla^3q \mbox{div}u+3\nabla^2 q\cdot\nabla\mbox{div}u+3\nabla q\cdot\nabla^2\mbox{div}u
+q\nabla^3\mbox{div}u\\[2mm]
&+\nabla^3u\cdot\nabla q+3\nabla^2u\cdot\nabla^2q+3\nabla u\cdot\nabla^3q+u\cdot\nabla^4 q\Big)dx
\end{split}
\end{equation*}
The term  $\nabla^4q$ including on $J_4$, by using integration by parts with $u\cdot n|_{\partial\Omega}=0$, we obtain
\begin{equation*}
\begin{split}
&R\int_{\Omega}(\nabla^3q+\nabla^3\theta)u\cdot\nabla^4qdx\\[2mm]
&=-\frac{R}{2}\int_{\Omega}\mbox{div}u |\nabla^3q|^2dx-R\int_{\Omega}\nabla^4\theta u\cdot\nabla^3q dx
-R\int_{\Omega}\mbox{div}u\nabla^3\theta\cdot \nabla^3q dx\\[2mm]
&\leq \Big(\|\mbox{div}u\|_{L^{\infty}}+\|u\|_{L^{\infty}}\Big)\cdot\Big(\|\nabla^3q\|^2+\|\nabla^3\theta\|^2+\|\nabla^4\theta\|^2\Big)\\[2mm]
&\leq \delta \Big(\|\nabla^3q\|^2+\|\nabla^3\theta\|^2+\|\nabla^4\theta\|^2\Big).
\end{split}
\end{equation*}
While the other terms on $J_4$ is controlled by $\delta \Big(\|\nabla^3q\|^2+\|\nabla^3\theta\|^2+\|\nabla^4u\|^2\Big)$. So
\begin{equation}\label{J4}
J_4\leq \delta \Big(\|\nabla^3q\|^2+\|\nabla^3\theta\|^2+\|\nabla^4\theta\|^2+\|\nabla^4u\|^2\Big).
\end{equation}
Putting estimates (\ref{J1})-(\ref{J4}) into (\ref{right-q3}) implies
\begin{equation*}
	\begin{split}
		&\frac{R}{2}\frac{d}{dt}\int_{\Omega}|\nabla^3 q|^2 dx+R\frac{d}{dt}\int_{\Omega}\nabla^3 q\cdot\nabla^3\theta dx +\frac{2\mu+\lambda}{2}\|\nabla^3\mbox{div}u\|^2\\[2mm]
&\leq C\|\nabla^2 \mbox{curl}^2u\|^2+ \varepsilon\|\nabla^3 q\|^2+C_{\varepsilon}\|\nabla^3\theta_t\|^2\\[2mm]
&+\delta \Big(\|\nabla^3q\|^2+\|\nabla q\|_1^2+\|\nabla^2u\|_{1}^2+\|\nabla^3\theta\|^2+\|\nabla^4\theta\|^2+\|\nabla^4u\|^2+\|\nabla^2u_t\|^2+\|\nabla^3H\|^2\Big).
\end{split}
\end{equation*}
Then, integrating over $[0,t]$, and using Cauchy inequality and Proposition \ref{prop}, Lemma \ref{lem4-3-1} implies the desired estimate (\ref{q3}).

\end{proof}

 Finally, we use Lemma \ref{lem-elliptic-1} to obtain the energy estimates $\|(\nabla^4u, \nabla^4\theta,\nabla^4H)\|$. On the other hand, we apply the elliptic estimates of Stokes-type systems (Lemma \ref{lem-elliptic-2}) to obtain the estimates of $\big(\|\nabla^3q\|+\|\nabla^4u\|\big)_{L^{2}_t(L^2)}$. Therefore, the high-order energy is closed, i.e. Proposition \ref{prop-2} is proved.

{\bf {Proof of  Proposition \ref{prop-2} }}
Applying stand elliptic estimates (i.e. Lemma \ref{lem-elliptic-1}) to $u$, $\theta$ and $H$, it is easy  to get the energy estimates $\|(\nabla^4u, \nabla^4\theta,\nabla^4H)\|$.

Next, rewriting $(\ref{prob-linear})_2$ as the following Stokes-type systems
\begin{equation*}
\left\{
\begin{array}{llll}
\mbox{div}u=\mbox{div}u,\\[2mm]
-\mu\Delta u+R\nabla q=
-\rho\big(u_{t}+u\cdot\nabla u\big)+(\mu+\lambda)\nabla\mbox{div}u-R\nabla\theta-R\nabla(q\theta)+\mbox{curl}H\times H\equiv f
\end{array}
\right.
\end{equation*}
Applying Lemma \ref{lem-elliptic-2} to the above Stokes-type systems, one has
\begin{equation}\label{this}
	\begin{split}
		\int_{0}^{t}\Big(\|\nabla ^2u\|_{2}^2+\|\nabla q\|_{2}^2\Big)ds&\leq C\Big(\int_{0}^{t}\|f\|_2^2ds+\int_{0}^{t}\|\mbox{div}u\|_{3}^2ds\Big)\\[2mm]
&\leq (C+\delta)\int_0^t\mathcal{D}^2_1(s)ds+ C_{*}\int_{0}^{t}\|\nabla^3\mbox{div}u\|^2ds\\[2mm]
&\leq C\mathcal{E}^2_1(0)+C_{*}\int_{0}^{t}\|\nabla^3\mbox{div}u\|^2ds,
\end{split}
\end{equation}
where we have used the result of Proposition \ref{prop}.
Choosing $\varepsilon$ small in Lemma \ref{lem-q3}, such that $\varepsilon C_{*}\leq\frac{2\mu+\lambda}{4}$, then, form (\ref{this}), we have
\begin{equation}\label{this-2}
	\begin{split}
		\frac{R}{4}\|\nabla^3 q\|^2+\frac{2\mu+\lambda}{4}\int_0^t\|\nabla^3\mbox{div}u\|^2ds
\leq C\mathcal{E}^2(0)+
\delta \int_0^t\Big(\|\nabla^3q\|^2+\|\nabla^4u\|^2\Big)ds.
\end{split}
\end{equation}
Therefore, together (\ref{this}) with (\ref{this-2}), for $\delta$ small,
\begin{equation*}
	\int_{0}^{t}\Big(\|\nabla ^2u\|_{2}^2+\|\nabla q\|_{2}^2\Big)ds\leq
 C\mathcal{E}^2(0).
\end{equation*}
So, we close the energy estimates. This complete the proof of  Proposition \ref{prop-2}.
\qed

\section {Decay rate}

For the solution $(q,u,\theta, H)$ of Theorem \ref{thm-2}, one has
 \begin{equation*}\int_0^{\infty}\|(\nabla q,\nabla u,\nabla\theta,\nabla H)\|_1^2dt<\infty, \quad
\int_0^{\infty}\left|\frac{d}{dt}\|(\nabla q,\nabla u,\nabla\theta,\nabla H)\|_1^2\right|dt<\infty
  \end{equation*}
  where $q=\rho-1$, $\theta=\mathcal{T}-1$, it yields that
\begin{equation}\label{tinfty}
\|(\nabla q,\nabla u,\nabla\theta,\mbox{curl}H)\|_1\rightarrow 0,\quad \mbox{as}\quad t\rightarrow \infty.
\end{equation}
In this section, we will prove Theorem \ref{thm-3} by the the following lemmas.
Let
\begin{equation*}
	\begin{split}
	&X(0,\infty;\mathcal{E}_0):=\big\{(q,u,\theta, H)\big|
q\in C^{0}(0,\infty, H^{3}(\Omega))\cap C^{1}(0,\infty, H^{2}(\Omega)),\\[1mm]
&(u, \theta, H)\in C^{0}(0,\infty,H^{4}(\Omega))\cap C^{1}(0,\infty,H^{2}(\Omega)),
\mbox{ and} \\[2mm]
	&\mathcal{E}^2(t)+\int_0^t\mathcal{D}^2(s)ds\leq C\mathcal{E}_0^2
\big\}
	\end{split}
\end{equation*}
where $\mathcal{E}$ and $\mathcal{D}$ is same as $(\ref{E2})$ and $(\ref{D2})$. It is observed that the solution of IBVP (\ref{prob-linear})-(\ref{boundary-linear}) is sought in the space $X(0,\infty,\mathcal{E}_0)$, where $0\leq\mathcal{E}_0\leq C\delta$ and $\delta$ is given in Theorem \ref{thm-2}.

The first and most important key point to prove Theorem \ref{thm-3} is to obtain the large-time behavior of the time derivatives $(q_t,u_t,\theta_t,H_t)$.

\begin{lem}\label{lem4-1}
For every solution $(q,u,\theta,H)\in X(0, \infty; \mathcal{E}_0)$ of the problem (\ref{prob-linear})-(\ref{boundary-linear}), it exists a time $T_1>0$,  depending on $(q,u,\theta, H)$, such that
\begin{equation}\label{deacy1}
\|(q_t,u_t,\theta_t,H_t)\|\leq Ct^{-1/2}\quad \forall t\geq T_1.
\end{equation}
\end{lem}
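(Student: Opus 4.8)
The plan is to apply part~(i) of Lemma~\ref{lem-deacy} to the density-weighted functional
\begin{equation*}
g(t):=\int_{\Omega}\Big(R\rho^{-1}q_t^2+\rho|u_t|^2+c_{v}\rho\theta_t^2+|H_t|^2\Big)\,dx ,
\end{equation*}
which is equivalent to $\|(q_t,u_t,\theta_t,H_t)\|^2$ since $\rho=1+q$ stays in $[\tfrac12,\tfrac32]$ for $\mathcal{E}_0$ small; establishing $g(t)\le Ct^{-1}$ then gives \eqref{deacy1}. Two ingredients are required: the integrability $E=\int_{t_0}^{\infty}g(s)\,ds<\infty$, and a differential inequality $g'(t)\le a(t)g(t)$ with $M=\int_{t_0}^{\infty}a(s)\,ds<\infty$.

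The integrability is immediate from membership in $X(0,\infty;\mathcal{E}_0)$: since $\mathcal{D}(t)$ dominates $\|(q_t,u_t,H_t)\|_{2}+\|\theta_t\|_{3}\ge\|(q_t,u_t,\theta_t,H_t)\|$, one has $g(t)\le C\mathcal{D}^2(t)$, whence $E\le C\int_0^{\infty}\mathcal{D}^2(s)\,ds\le C\mathcal{E}_0^2<\infty$.

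For the differential inequality I would differentiate the system \eqref{prob-linear} in $t$ and compute
\begin{equation*}
\int_{\Omega}\Big\{\partial_{t}(\ref{prob-linear})_1\,R\rho^{-1}q_t+\partial_{t}(\ref{prob-linear})_2\cdot u_t+\partial_{t}(\ref{prob-linear})_3\,\theta_t+\partial_{t}(\ref{prob-linear})_4\cdot H_t\Big\}\,dx ,
\end{equation*}
as in the proof of Lemma~\ref{6-1}; since $\partial_t$ preserves the boundary conditions \eqref{boundary-linear}, the integrations by parts are legitimate. The pressure and temperature cross terms cancel up to the small remainder $-R\int_{\Omega}\rho^{-1}q\,q_t\,\mbox{div}\,u_t\,dx$, and one reaches
\begin{equation*}
\tfrac12 g'(t)+\mu\|\mbox{curl}\,u_t\|^2+(2\mu+\lambda)\|\mbox{div}\,u_t\|^2+\kappa\|\nabla\theta_t\|^2+\eta\|\mbox{curl}\,H_t\|^2=R(t),
\end{equation*}
where $R(t)$ collects the nonlinear terms. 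By Proposition~\ref{prop-imp} applied to $u_t$ and $H_t$ (valid since $u_t\cdot n|_{\partial\Omega}=H_t\cdot n|_{\partial\Omega}=0$), the left-hand dissipation controls $c\|(\nabla u_t,\nabla\theta_t,\nabla H_t)\|^2$, so it suffices to show $R(t)\le\varepsilon\|(\nabla u_t,\nabla\theta_t,\nabla H_t)\|^2+a(t)g(t)$ with $a\in L^1(t_0,\infty)$; absorbing the dissipation then yields $g'(t)\le 2a(t)g(t)$, and Lemma~\ref{lem-deacy}(i) with $t_0=:T_1$ gives $g(t)\le Ct^{-1}$, i.e. \eqref{deacy1}.

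The one genuinely delicate term---and the reason for the weight $\rho^{-1}$---is the cubic term $\int_{\Omega}q_t^3\,dx$. Testing the continuity equation with the unweighted $Rq_t$ produces, after substituting $\rho\,\mbox{div}\,u=-q_t-u\cdot\nabla q$, a contribution $\tfrac{R}{2}\int_{\Omega}\rho^{-1}q_t^3\,dx$ whose only elementary bound $C\|q_t\|_{L^\infty}\|q_t\|^2$ carries the coefficient $\|q_t\|_{L^\infty}\le C\|q_t\|_{2}$, which is merely square-integrable (not integrable) in time and would be fatal for Lemma~\ref{lem-deacy}(i). Weighting $q_t$ by $\rho^{-1}$ makes $\tfrac{d}{dt}\big(\tfrac{R}{2}\int\rho^{-1}q_t^2\big)$ generate a compensating $-\tfrac{R}{2}\int_{\Omega}\rho^{-1}q_t^3\,dx$ that exactly cancels this contribution. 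Every remaining term of $R(t)$ then carries at least one factor that is either a spatial gradient of an unknown or a gradient of a time derivative; I would absorb each $\nabla u_t,\nabla\theta_t,\nabla H_t$ into the dissipation by Young's inequality and estimate the surviving zeroth-order factors by the exterior-domain Sobolev inequalities of Lemma~\ref{sobolev inequ}, which bound $\|(u,\theta,H,q)\|_{L^\infty}+\|(u,\theta,H,q)\|_{L^6}$ by $\|\nabla(u,\theta,H,q)\|_{1}$ alone---a point essential here, since on the unbounded domain the $L^2$ norms of the unknowns themselves are not controlled. Because these gradient quantities are dominated by $\mathcal{D}(t)$ and hence square-integrable in time, each coefficient $a(t)$ becomes a product of bounded factors with an $L^1(t_0,\infty)$ quantity, giving $M<\infty$. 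The main obstacle is exactly this cancellation of the cubic term together with the bookkeeping that places every coefficient in $L^1$ in time.
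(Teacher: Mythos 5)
Your proof is correct and follows the same overall route as the paper's: test the $t$-differentiated system against the time derivatives, use Proposition \ref{prop-imp} on $u_t$ and $H_t$ to turn the curl/div dissipation into full gradient dissipation, absorb every gradient factor into that dissipation, place the coefficient $a(t)$ in $L^{1}(t_0,\infty)$ via $\int_0^\infty\mathcal{D}^2(s)\,ds\le C\mathcal{E}_0^2$, and close with Lemma \ref{lem-deacy}(i). The one place you genuinely diverge is the cubic term, which you correctly identify as the crux: the paper keeps the unweighted multiplier $Rq_t$ and rewrites $\tfrac{R}{2}\int_\Omega q_t^3\,dx=\tfrac{R}{2}\tfrac{d}{dt}\int_\Omega qq_t^2\,dx-R\int_\Omega qq_tq_{tt}\,dx$, absorbing the total derivative into the modified functional $\phi(t)=\tfrac{R}{2}\int_\Omega(1-q)q_t^2\,dx+\cdots$ and disposing of the $q_{tt}$ integral by substituting the continuity equation once more; you instead build the weight $\rho^{-1}$ into the functional from the outset so that the two cubic contributions cancel identically. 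Since $\rho^{-1}=1-q+O(q^2)$, the paper's functional is the first-order version of yours, so the two devices are essentially the same idea; yours is arguably tidier because $q_{tt}$ never appears. One bookkeeping slip: with the multiplier $R\rho^{-1}q_t$ both cubic terms carry the weight $\rho^{-2}$ (one from $\partial_t\rho^{-1}=-\rho^{-2}q_t$, the other from inserting $\mbox{div}u=-\rho^{-1}(q_t+u\cdot\nabla q)$ into $-\tfrac{R}{2}\int_\Omega\rho^{-1}q_t^2\,\mbox{div}u\,dx$), not $\rho^{-1}$ as written; the cancellation is still exact, so nothing breaks. Your diagnosis of why the naive bound fails ($\|q_t\|_{L^\infty}$ is only square-integrable in time, so it cannot serve as $a(t)$), and your identification of the residual cross term $-R\int_\Omega\rho^{-1}q\,q_t\,\mbox{div}u_t\,dx$ created by the weighting, are both accurate and match the estimates the paper carries out for its $J_1$--$J_4$.
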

\begin{proof}
Differentiate $(\ref{prob-linear})_1$ with respect to $t$, multiply the resulting identity by $Rq_{t}$ and integrate over $\Omega$, it follows
\begin{equation}\label{deacy-qt}
\begin{split}
&\frac{R}{2}\frac{d}{dt}\int_{\Omega}q_t^2dx+R\int_{\Omega}\mbox{div}u_tq_tdx=-R\int_{\Omega}\left(\mbox{div}(qu)\right)_{t}q_{t}dx
\\[2mm]
&=-R\int_{\Omega}q_t u_{t}\cdot \nabla q dx -R\int_{\Omega}q_tu\cdot\nabla q_{t} dx-R\int_{\Omega}q_t^{2}\mbox{div}u dx
-R\int_{\Omega}qq_{t}\mbox{div}u_{t} dx\\[2mm]
&\equiv \sum_{j=1}^{4} J_{i}.
\end{split}
\end{equation}
Utilizing Lemma \ref{sobolev inequ} and H\"{o}lder's inequality, for $\epsilon>0$ we deduce that
\begin{equation*}
|J_1|\leq \|u_{t}\|_{L^6}\|\nabla q\|_{L^3}\|q_{t}\|\leq \varepsilon \|\nabla u_{t}\|^2+C_{\varepsilon}\|\nabla q\|_1^2\|q_t\|^2,
\end{equation*}
\begin{equation*}
|J_4|\leq \|\mbox{div}u_{t}\|\|q\|_{L^{\infty}}\|q_{t}\|\leq \varepsilon \|\nabla u_{t}\|^2+C_{\varepsilon}\|\nabla q\|_1^2\|q_t\|^2.
\end{equation*}
Integration by parts with boundary condition $u\cdot n|_{\partial \Omega}=0$ yields
\begin{equation*}
J_2=\frac{R}{2}\int_{\Omega}q_{t}^2\mbox{div}u dx,
\end{equation*}
which implies that
\begin{equation*}
J_2+J_3=-\frac{R}{2}\int_{\Omega}q_{t}^2\mbox{div}u dx.
\end{equation*}
In view of equation $(\ref{prob-linear})_{1}$, it holds out that
\begin{equation*}
\begin{split}
J_2+J_3&=\frac{R}{2}\int_{\Omega}q_{t}^2 \left(q_t+q\mbox{div}u+u\cdot\nabla q\right)dx\\[2mm]
&=\frac{R}{2}\frac{d}{dt}\int_{\Omega}qq_{t}^2dx-R\int_{\Omega}qq_{t}q_{tt}dx
+\frac{R}{2}\int_{\Omega}q_{t}^2q\mbox{div}u dx+\frac{R}{2}\int_{\Omega}q_{t}^2u\cdot\nabla qdx\\[2mm]
&\leq\frac{R}{2}\frac{d}{dt}\int_{\Omega}qq_{t}^2dx-R\int_{\Omega}qq_{t}q_{tt}dx+\|\left(\nabla u,\nabla q\right)\|_2^2\|q_t\|^2.
\end{split}
\end{equation*}
For the term $\int_{\Omega}qq_{t}q_{tt}dx$, using $(\ref{prob-linear})_{1}$ again, one has
\begin{equation*}
\begin{split}
-R\int_{\Omega}qq_{t}q_{tt}dx&=R\int_{\Omega}qq_{t}\left(\mbox{div}u+\mbox{div}(qu)\right)_{t}dx\\[2mm]
&\leq \varepsilon \|\nabla u_t\|^2+C_{\varepsilon} \|\left(\nabla u,\nabla q\right)\|_2^2\|q_t\|^2.
\end{split}
\end{equation*}
Plugging all above inequalities into (\ref{deacy-qt}), we obtain
\begin{equation}\label{lem4-1-2}
\frac{R}{2}\frac{d}{dt}\int_{\Omega}q_t^2dx-\frac{R}{2}\frac{d}{dt}\int_{\Omega}qq_{t}^2dx+R\int_{\Omega}\mbox{div}u_tq_t dx
\leq \varepsilon \|\nabla u_{t}\|^2+C_{\varepsilon}\|(\nabla q,\nabla u)\|_2^2\|q_t\|^2.
\end{equation}
In a similar way we get  the following inequalities. From $(\ref{prob-linear})_{2}$, we have
\begin{equation}\label{lem4-1-u}
\begin{split}
\frac{1}{2}&\frac{d}{dt}\int_{\Omega}\rho |u_t|^2 dx +R\int_{\Omega}u_{t}\cdot \nabla q_{t}dx+R\int_{\Omega}u_{t}\cdot \nabla \theta_{t}dx+\mu \|\mbox{curl}u_{t}\|^2+(2\mu+\lambda)\|\mbox{div}u_{t}\|^2\\[2mm]
\leq& (\varepsilon+\|\nabla u\|_1^2) \left(\|\nabla u_{t}\|^2+\|\mbox{curl}H_{t}\|^2\right)\\[2mm]
&+C_{\varepsilon}\left(\|u_{t}\|_1^2+\|(\nabla q,\nabla u,\nabla\theta)\|_1^2+\|\mbox{curl}H\|_2^2\right)\cdot\left(\|q_t\|^2+\|u_{t}\|^2+\|\theta_{t}\|^2+\|H_t\|^2\right).
\end{split}
\end{equation}
From $(\ref{prob-linear})_{3}$, one has
\begin{equation}\label{lem4-1-theta}
\begin{split}
\frac{c_{v}}{2}&\frac{d}{dt}\int_{\Omega}\rho |\theta_t|^2dx+R\int_{\Omega}\theta_{t}\mbox{div}u_{t}dx+\kappa \|\nabla\theta_{t}\|^2\\[2mm]
\leq &\left(\varepsilon+\|\nabla u\|_1^2\right) \left(\|\nabla \theta_{t}\|^2+\|\nabla u_{t}\|^2+\|\mbox{curl}H_{t}\|^2\right)\\[2mm]
&+C_{\varepsilon}\left(\|\theta_{t}\|_1^2+\|(\nabla q,\nabla\theta)\|_1^2+\|(\nabla u,\mbox{curl}H\|_2^2\right)\cdot\Big(\|q_t\|^2+\|u_{t}\|^2+\|\theta_{t}\|^2\Big).
\end{split}
\end{equation}
And from $(\ref{prob-linear})_{4}$, it gives that
\begin{equation}\label{lem4-1-H1}
\begin{split}
\frac{1}{2}\frac{d}{dt}\int_{\Omega}|H_{t}|^2dx+\eta\|\mbox{curl}H_{t}\|^2
\leq \varepsilon  \left(\|\nabla u_{t}\|^2+\|\mbox{curl}H_{t}\|^2\right)
+C_{\varepsilon}\|(\nabla u,\mbox{curl}H)\|_1^2\|H_{t}\|^2.
\end{split}
\end{equation}
Summing up the estimates (\ref{lem4-1-2})-(\ref{lem4-1-H1}), and using Proposition \ref{prop-imp}, it implies that
\begin{equation}\label{lem4-1-H}
\begin{split}
&\frac{1}{2}\frac{d}{dt}\left\{R\|q_t\|^2+\|\sqrt{\rho}u_t\|^2+c_{v}\|\sqrt{\rho}\theta_t\|^2+\|H_{t}\|^2\right\}
-\frac{R}{2}\frac{d}{dt}\int_{\Omega}qq_t^2dx\\[2mm]
&+c_{*}\|\nabla u_{t}\|^2+\kappa \|\nabla\theta_{t}\|^2+\eta\|\mbox{curl}H_{t}\|^2\\[2mm]
&\leq \left(\varepsilon+\|\nabla u\|_1^2\right) \Big(\|\nabla \theta_{t}\|^2+\|\nabla u_{t}\|^2+\|\mbox{curl}H_{t}\|^2\Big)\\[2mm]
&+C_{\varepsilon}\left(\|u_{t}\|_1^2+\|\theta_{t}\|_1^2+\|(\nabla q,\nabla u,\mbox{curl}H)\|_2^2+\|\nabla\theta\|_1^2\right)\cdot\Big(\|q_t\|^2+\|u_{t}\|^2+\|\theta_{t}\|^2+\|H_t\|^2\Big),
\end{split}
\end{equation}
where $c_{*}>0$ is some positive constant depending on $\mu$, $\lambda$ and $C_{\Omega}$ as in Proposition \ref{prop-imp}. Define
\begin{equation*}
	\begin{split}
\phi(t)&\equiv \frac{1}{2}\left\{ R\|q_t\|^2+\|\sqrt{\rho}u_t\|^2+c_{v}\|\sqrt{\rho}\theta_t\|^2+\|H_{t}\|^2\right\}
-\frac{R}{2}\int_{\Omega}q q_t^2dx\\[2mm]
&=\frac{R}{2}\int_{\Omega}(1-q)q_t^2dx+\frac{1}{2}\left\{\|\sqrt{\rho}u_t\|^2+c_{v}\|\sqrt{\rho}\theta_t\|^2+\|H_{t}\|^2\right\},
\end{split}
\end{equation*}
then it is obvious that
\begin{equation*}
\|(q_t,u_t,\theta_t,H_t)\|^2\leq C\phi(t).
\end{equation*}
Moreover, because the fact (\ref{tinfty}) is true, there exists a positive constant $T_1>0$ such that
\begin{equation*}
\varepsilon+\|\nabla u\|_1^2\leq\frac{1}{2}\min(c_{*},\kappa,\eta),\quad \forall t\geq T_1.
\end{equation*}
Therefore,
\begin{equation*}
\phi'(t)\leq a(t)\phi(t),\quad \forall t\geq T_1,\\[1mm]
\end{equation*}
where $a(t)=C_{\epsilon}\left(\|u_{t}\|_1^2+\|\theta_{t}\|_1^2+\|(\nabla q,\nabla u,\mbox{curl}H)\|_2^2+\|\nabla\theta\|_1^2\right).$ We have
\begin{equation*}
\int_{0}^{\infty}\phi(t)dt<\infty, \quad \int_{0}^{\infty}a(t)dt<\infty.
\end{equation*}
due to $(q,u,\theta,H)\in X(0, \infty; \mathcal{E}_0)$, Then Lemma \ref{lem-deacy} (i) implies the assertion of Lemma \ref{lem4-1}.
\end{proof}

\begin{lem}\label{lem4-2}
For every solution $(q,u,\theta,H)\in X(0, \infty; \mathcal{E}_0)$ of the problem (\ref{prob-linear})-(\ref{boundary-linear}), it exists a time $T_2=T_2(q,u,\theta,H)\geq T_1>0$ such that
\begin{equation}\label{deacy2}
\|(\nabla u,\nabla \theta,\nabla H)\|^2\leq C\|(q_t,u_t,\theta_t, H_t)\| \quad \forall t\geq T_2,
\end{equation}
\begin{equation}\label{deacy3}
\|(\nabla^2 \theta,\nabla^2H)\|^2\leq C\|(q_t,u_t,\theta_t, H_t)\| \quad \forall t\geq T_2.
\end{equation}
\end{lem}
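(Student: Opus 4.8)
The plan is to prove the two estimates separately: the first-order bound \eqref{deacy2} by a direct energy argument on the perturbation system \eqref{prob-linear}, and then the second-order bound \eqref{deacy3} by feeding \eqref{deacy2} into the elliptic regularity estimates for $\theta$ and $H$. For \eqref{deacy2} I would test $(\ref{prob-linear})_2$, $(\ref{prob-linear})_3$ and $(\ref{prob-linear})_4$ against $u$, $\theta$ and $H$ respectively and integrate by parts. Since $u\cdot n=\mbox{curl}u\times n=0$, $H\cdot n=\mbox{curl}H\times n=0$ and $\partial_n\theta=0$ on $\partial\Omega$, all boundary integrals vanish and the viscous/diffusive terms produce the dissipation $\mu\|\mbox{curl}u\|^2+(2\mu+\lambda)\|\mbox{div}u\|^2+\kappa\|\nabla\theta\|^2+\eta\|\mbox{curl}H\|^2$; by Proposition \ref{prop-imp} (applicable because the first Betti number of $\Omega$ vanishes and $u\cdot n=H\cdot n=0$) this bounds $\|(\nabla u,\nabla\theta,\nabla H)\|^2$ from below. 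On the right-hand side the two linear coupling terms $-R\int\theta\,\mbox{div}u$ and $+R\int\theta\,\mbox{div}u$ coming from the $u$- and $\theta$-tests cancel, while the time-derivative terms $\int\rho u_t\cdot u$, $c_v\int\rho\theta_t\theta$ and $\int H_t\cdot H$ are bounded by $C(\|u\|+\|\theta\|+\|H\|)\|(u_t,\theta_t,H_t)\|\leq C\mathcal{E}_0\|(q_t,u_t,\theta_t,H_t)\|$, which is exactly the linear right-hand side wanted (the absence of a Poincar\'e inequality is harmless here because we retain the solution's own $L^2$-norm, uniformly bounded by $\mathcal{E}_0$).

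The one term not immediately of the desired form is the density coupling $R\int q\,\mbox{div}u$ produced by the pressure. I would eliminate it using continuity $(\ref{prob-linear})_1$ in the form $\mbox{div}u=-q_t-\mbox{div}(qu)$, which turns it into $-R\int q\,q_t$ --- again bounded by $C\mathcal{E}_0\|q_t\|$ and hence of the right form --- plus cubic remainders such as $-\tfrac{R}{2}\int q^2\,\mbox{div}u$. All remaining nonlinear (quadratic and cubic) terms carry at least one factor of $\|(\nabla q,\nabla u,\nabla\theta,\nabla H)\|_1$, which tends to $0$ by \eqref{tinfty}; choosing $T_2\geq T_1$ so large that this factor is below the coercivity threshold of the dissipation, these remainders are absorbed into the left-hand side for $t\geq T_2$. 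Finally, any quantity that appears quadratically in the time derivatives is converted to a linear one via $\|(q_t,u_t,\theta_t,H_t)\|^2\leq C\mathcal{E}_0\|(q_t,u_t,\theta_t,H_t)\|$, valid since $\|(q_t,u_t,\theta_t,H_t)\|\leq C\mathcal{E}_0$ on $X(0,\infty;\mathcal{E}_0)$. This yields \eqref{deacy2}.

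For \eqref{deacy3} I would read $(\ref{prob-linear})_4$ as $\eta\Delta H=H_t-\mbox{curl}(u\times H)$ and apply Lemma \ref{lem-elliptic-1} with $v=H$ to get $\|\nabla^2 H\|\leq C(\|\Delta H\|+\|\nabla H\|)\leq C(\|H_t\|+\|\mbox{curl}(u\times H)\|+\|\nabla H\|)$, and likewise rewrite $(\ref{prob-linear})_3$ as $\kappa\Delta\theta=c_v\rho(\theta_t+u\cdot\nabla\theta)+R\,\mbox{div}u-\lambda(\mbox{div}u)^2-2\mu|S(u)|^2+R(\rho\theta+q)\mbox{div}u-\eta|\mbox{curl}H|^2$ and apply Lemma \ref{lem-elliptic-3} to bound $\|\nabla^2\theta\|\leq C(\|\Delta\theta\|+\|\nabla\theta\|)$. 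The nonlinear contributions (e.g.\ $\|\mbox{curl}(u\times H)\|\lesssim\mathcal{E}_0\|(\nabla u,\nabla H)\|$, $\|u\cdot\nabla\theta\|\lesssim\mathcal{E}_0\|\nabla\theta\|$, and the quadratic source terms) are all dominated by $\|(\nabla u,\nabla\theta,\nabla H)\|$, so after squaring I obtain $\|(\nabla^2\theta,\nabla^2 H)\|^2\leq C\big(\|(\theta_t,H_t)\|^2+\|\mbox{div}u\|^2+\|(\nabla u,\nabla\theta,\nabla H)\|^2\big)$. Using $\|(\theta_t,H_t)\|^2\leq C\mathcal{E}_0\|(\theta_t,H_t)\|$ together with \eqref{deacy2} to dominate the remaining first-order quantities by $\|(q_t,u_t,\theta_t,H_t)\|$ gives \eqref{deacy3}.

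The main obstacle is precisely the density/pressure coupling in the momentum equation. Unlike the Dirichlet exterior problem, here no Poincar\'e inequality is available to bound $\|q\|$ or $\|u\|$ by their gradients, so the only way to keep the right-hand side controlled by $\|(q_t,u_t,\theta_t,H_t)\|$ --- which is what will later produce the algebraic decay through Lemma \ref{lem4-1} --- is to trade $\mbox{div}u$ for $q_t$ via the continuity equation and to rely on the smallness of $\|\nabla(q,u,\theta,H)\|_1$ supplied by \eqref{tinfty} to absorb every genuinely nonlinear remainder. This is exactly why the estimates can only be asserted for $t\geq T_2$.
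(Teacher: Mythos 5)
Your overall architecture --- testing the momentum, temperature and induction equations with $u$, $\theta$, $H$, exploiting the cancellation of the two linear $\int\theta\,\mbox{div}u\,dx$ couplings, keeping the undifferentiated $L^2$-norms on the right as $C\mathcal{E}_0\|(q_t,u_t,\theta_t,H_t)\|$, and then deducing \eqref{deacy3} from \eqref{deacy2} via the elliptic estimates for $\theta$ and $H$ --- is exactly the paper's, and your second half is correct as written. The gap is in the one place you flagged yourself: the pressure--density coupling. A single substitution $\mbox{div}u=-q_t-\mbox{div}(qu)$ in $-R\int q\,\mbox{div}u\,dx$ leaves the cubic remainder $\pm\tfrac{R}{2}\int q^2\,\mbox{div}u\,dx$, and this term is \emph{not} absorbable into the left-hand side. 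Every bound you can put on it, e.g.\ $\|q\|_{L^3}\|q\|_{L^6}\|\mbox{div}u\|\le C\mathcal{E}_0\|\nabla q\|\,\|\mbox{div}u\|$, carries a factor of $\|\nabla q\|$; but $\|\nabla q\|^2$ does not occur in the dissipation produced by this energy identity (only $\|\mbox{curl}u\|^2$, $\|\mbox{div}u\|^2$, $\|\nabla\theta\|^2$, $\|\mbox{curl}H\|^2$ do), so Young's inequality dumps a term proportional to $\|\nabla q\|^2$ onto the right-hand side that is neither absorbable nor yet known to be $O(\|(q_t,u_t,\theta_t,H_t)\|)$ --- that is precisely the content of Lemma \ref{lem4-4}, which is proved \emph{after} and \emph{using} the present lemma, so invoking it here would be circular. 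Nor can you shift the smallness onto $q$ itself: on an exterior domain $\|q\|_{L^2}$ is merely bounded, it does not decay, which is the whole reason this lemma is delicate.

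The repair is to push your substitution to all orders: iterating $\int q^k\,\mbox{div}u\,dx=-\int q^k q_t\,dx-\tfrac{k}{k+1}\int q^{k+1}\,\mbox{div}u\,dx$ produces a convergent series (since $\|q\|_{L^\infty}\le C\delta<1$) whose sum is controlled by $C\|q\|\,\|q_t\|\le C\mathcal{E}_0\|q_t\|$ with no $\|\nabla q\|$ anywhere. The paper does this in closed form: it groups the pressure as $R(\theta+1)\nabla q+R\nabla\theta+Rq\nabla\theta$ and multiplies the continuity equation written as $q_t+\mbox{div}(\rho u)=0$ by $R(\theta+1)\ln(1+q)$; since $\rho\,\nabla\ln(1+q)=\nabla q$, integration by parts turns $\int R(\theta+1)u\cdot\nabla q\,dx$ exactly into $\int Rq_t(\theta+1)\ln(1+q)\,dx-\int R\rho\ln(1+q)\,u\cdot\nabla\theta\,dx\le C\|q_t\|+C\mathcal{E}_0\|\nabla u\|\,\|\nabla\theta\|$, which is of the required form. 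With that replacement your argument closes.
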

\begin{proof}
Multiplying
\begin{equation*}
q_{t}+\mbox{div}(\rho u)=0
\end{equation*}
 by $R(\theta+1)\ln (q+1)$ and integrating over $\Omega$, after integration by parts with the boundary condition $u\cdot n|_{\partial\Omega}=0$, we have
\begin{equation}\label{lem-4-2-1}
\begin{split}
-\int_{\Omega}R(\theta+1)u\cdot\nabla qdx&=-\int_{\Omega}R q_{t}(\theta+1)\ln (q+1)dx+\int_{\Omega}R \rho \ln(q+1)u\cdot\nabla\theta dx\\[2mm]
&\leq C\|q_t\|+C\|q\|_{L^3}\|u\|_{L^6}\|\nabla\theta\|,
\end{split}
\end{equation}
where we have used the fact
\begin{equation*}
	|\ln(q+1)-\ln1|\leq C|q|.
\end{equation*}
Let us rewrite $(\ref{prob-linear})_{2}$ in the following  form
\begin{equation*}
\rho\big(u_{t}+u\cdot\nabla u\big)-\mu\Delta u-(\mu+\lambda)\nabla\mbox{div}u+R(\theta+1)\nabla q+R\nabla\theta=-Rq\nabla\theta+\mbox{curl}H\times H,
\end{equation*}
multiplication this equation by $u$, and integration by parts indicate that
\begin{equation*}
\begin{split}
&\mu\|\mbox{curl}u\|^2+(2\mu+\lambda)\|\mbox{div}u\|^2
+\int_{\Omega}R(\theta+1)u\cdot\nabla q dx
-\int_{\Omega}R\mbox{div}u\theta dx-\int_{\Omega}(\mbox{curl}H\times H) \cdot u dx\\[2mm]
&=-\int_{\Omega}\rho u_{t}\cdot u dx-\int_{\Omega}\rho (u\cdot\nabla) u\cdot u dx-\int_{\Omega}Rqu\cdot\nabla\theta dx,
\end{split}
\end{equation*}
then by means of H$\ddot{o}$lder's inequality, it leads to
\begin{equation}\label{lem-4-2-2}
\begin{split}
&\mu\|\mbox{curl}u\|^2+(2\mu+\lambda)\|\mbox{div}u\|^2
+\int_{\Omega}R(\theta+1)u\cdot\nabla q
-\int_{\Omega}R\mbox{div}u\theta-\int_{\Omega}(\mbox{curl}H\times H) \cdot u\\[2mm]
&\leq C\|u_{t}\|+C\|(u,q)\|_{L^{3}}\|(\nabla u, \nabla \theta)\|^2.
\end{split}
\end{equation}
Similarly, multiplying $(\ref{prob-linear})_{3}$ by $\theta$, we have
\begin{equation*}
\begin{split}
&\kappa \|\nabla\theta\|^2+R\int_{\Omega}\mbox{div}u\theta dx\\[2mm]
&=\int_{\Omega}\left(
-c_{v}\rho(\theta_{t}+ u\cdot\nabla\theta)-R(\rho\theta+q)\mbox{div}u+\lambda(\mbox{div}u)^2+2\mu |S(u)|^2+\eta|\mbox{curl}H|^2\right)\theta dx,
\end{split}
\end{equation*}
which implies
\begin{equation}\label{lem-4-2-3}
\kappa \|\nabla\theta\|^2+R\int_{\Omega}\mbox{div}u\theta dx
\leq C\|\theta_{t}\|+C(\|\theta\|_{L^{\infty}} +\|(u,q,\theta)\|_{L^{3}})\|(\nabla u, \nabla \theta, \mbox{curl}H)\|^2.
\end{equation}
 And,  multiplying $(\ref{prob-linear})_{4}$ by $H$, and integrating by parts  with the boundary condition $\mbox{curl}H\times n|_{\partial\Omega}=0$, it shows
\begin{equation}\label{lem-4-2-4}
\eta \|\mbox{curl}H\|^2-\int_{\Omega}\mbox{curl}(u\times H)\cdot H=-\int_{\Omega}H_{t}\cdot H
\leq C\|H_{t}\|.
\end{equation}
Putting (\ref{lem-4-2-1})-(\ref{lem-4-2-4}) together, it could be arrived at
\begin{equation*}
\begin{split}
&\mu\|\mbox{curl}u\|^2+(2\mu+\lambda)\|\mbox{div}u\|^2+\kappa \|\nabla\theta\|^2+\eta \|\mbox{curl}H\|^2\\[2mm]
&\leq C\|(q_t,u_t, \theta_t,  H_{t})\|
+C(\|\theta\|_{L^{\infty}} +\|(u,q,\theta)\|_{L^{3}})\|(\nabla u, \nabla \theta, \mbox{curl}H)\|^2,
\end{split}
\end{equation*}
where we have used the fact
\begin{equation*}
 \int_{\Omega}(\mbox{curl}H\times H) \cdot u dx+\int_{\Omega}\mbox{curl}(u\times H)\cdot H dx =0.
 \end{equation*}
Taking the same argument as (\ref{lem4-1-H}) of Lemma \ref{lem4-1}, the desired results (\ref{deacy2}) is received.

In order to prove (\ref{deacy3}), we employ $(\ref{prob-linear})_{3}$ to get the estimate for $\Delta\theta$ firstly:
\begin{equation*}
\kappa\Delta\theta=
c_{v}\rho(\theta_{t}+ u\cdot\nabla\theta)+R\mbox{div}u-\lambda(\mbox{div}u)^2-2\mu |S(u)|^2+R(\rho\theta+q)\mbox{div}u-\eta|\mbox{curl}H|^2
\end{equation*}
which implies
\begin{equation}\label{4.11}
\|\Delta\theta\|^2\leq C(\|\theta_{t}\|+\|(\nabla u,\nabla\theta, \mbox{curl}H)\|^2).
\end{equation}
Recalling the stand elliptic estimate in Lemma \ref{lem-elliptic-3}:
\begin{equation}
\|\nabla^2\theta\|^2\leq C\Big(\|\Delta\theta\|^2+\|\nabla\theta\|^2\Big) \quad \mbox{for} \quad \theta\in H^{2}(\Omega)\quad\mbox{with}\quad \frac{\partial \theta}{\partial n}\Big|_{\partial\Omega}=0,
\end{equation}
which together with (\ref{deacy2}) and (\ref{4.11}) induces that
\begin{equation}
\|\nabla^2 \theta\|^2\leq C\|(q_t,u_t,\theta_t, H_t)\| \quad \forall t\geq T_2.
\end{equation}
Similarly, we have the following equation from $(\ref{prob-linear})_{4}$:
\begin{equation*}
\eta \Delta H=H_{t}-\mbox{curl}(u\times H),
\end{equation*}
which implies that
\begin{equation*}
\begin{split}
\|\Delta H\|^2&\leq \|H_{t}\|^2+\|u\|_{L^{\infty}}^2\| \nabla H\|^2+\|H\|_{L^{\infty}}\|\nabla u\|^2\\[2mm]
&\leq C\|(q_t,u_t,\theta_t, H_{t})\|.
\end{split}
\end{equation*}
Furthermore, using the standard elliptic estimate, it holds
\begin{equation*}
\|\nabla ^2H\|^2\leq C\Big(\|\Delta H\|^2+\|\nabla H\|^2\Big)
\leq C\|(q_t,u_t,\theta_t, H_{t})\|.
\end{equation*}
Therefore, the proof of Lemma \ref{lem4-2} is completed.
\end{proof}

\begin{lem}\label{lem4-3}
	For every solution $(q,u,\theta,H)\in X(0, \infty; \mathcal{E}_0)$ of the problem (\ref{prob-linear})-(\ref{boundary-linear}), it exists a time $T_3=T_3(q,u,\theta,H)\geq T_2$ such that
\begin{equation}
\|\mbox{curl}^2u\|\leq C t^{-1/4} \quad \forall t\geq T_3
\end{equation}
\end{lem}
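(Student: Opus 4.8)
The plan is to read off $\|\mbox{curl}^2u\|$ directly from the momentum equation $(\ref{prob-linear})_2$ by pairing it with $\mbox{curl}^2u$, using the fact that every gradient (pressure) term is annihilated in this pairing. First I would use $-\Delta u=\mbox{curl}^2u-\nabla\mbox{div}u$ to isolate the rotational part of $(\ref{prob-linear})_2$,
\begin{equation*}
\mu\,\mbox{curl}^2u=-\rho(u_t+u\cdot\nabla u)+(2\mu+\lambda)\nabla\mbox{div}u-R\nabla(q+\theta+q\theta)+\mbox{curl}H\times H .
\end{equation*}
Taking the $L^2(\Omega)$ inner product with $\mbox{curl}^2u$, the key observation is that $\mbox{div}(\mbox{curl}^2u)=0$ identically and $\mbox{curl}^2u\cdot n|_{\partial\Omega}=0$ (the same boundary fact already invoked in Lemma \ref{7} and in \eqref{u4}, which holds because $w=\mbox{curl}u$ is purely normal on $\partial\Omega$ by $\mbox{curl}u\times n|_{\partial\Omega}=0$, so its curl has vanishing normal trace). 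Hence for any scalar $\phi$ one has $\int_\Omega\nabla\phi\cdot\mbox{curl}^2u\,dx=\int_{\partial\Omega}\phi\,(\mbox{curl}^2u\cdot n)\,dS-\int_\Omega\phi\,\mbox{div}(\mbox{curl}^2u)\,dx=0$, so the whole gradient block $(2\mu+\lambda)\nabla\mbox{div}u-R\nabla(q+\theta+q\theta)$ drops out.

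This leaves the clean identity $\mu\|\mbox{curl}^2u\|^2=\int_\Omega\big(-\rho(u_t+u\cdot\nabla u)+\mbox{curl}H\times H\big)\cdot\mbox{curl}^2u\,dx$, and Cauchy--Schwarz gives
\begin{equation*}
\|\mbox{curl}^2u\|\leq C\big(\|u_t\|+\|u\cdot\nabla u\|+\|\mbox{curl}H\times H\|\big).
\end{equation*}
I would bound the two quadratic terms by H\"older together with the Sobolev embedding of Lemma \ref{sobolev inequ}: $\|u\cdot\nabla u\|\leq\|u\|_{C^0}\|\nabla u\|\leq C\|u\|_4\|\nabla u\|$ and $\|\mbox{curl}H\times H\|\leq\|H\|_{C^0}\|\mbox{curl}H\|\leq C\|H\|_4\|\nabla H\|$, where $\|u\|_4$, $\|H\|_4$ and $\|\rho\|_{C^0}$ are uniformly bounded by $\mathcal{E}(t)\leq C\delta$ via Proposition \ref{prop-2}. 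Thus $\|\mbox{curl}^2u\|\leq C(\|u_t\|+\|\nabla u\|+\|\nabla H\|)$.

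It remains to insert the decay already established. By Lemma \ref{lem4-1}, $\|u_t\|\leq Ct^{-1/2}$ for $t\geq T_1$; by Lemma \ref{lem4-2} combined with Lemma \ref{lem4-1}, $\|(\nabla u,\nabla H)\|^2\leq C\|(q_t,u_t,\theta_t,H_t)\|\leq Ct^{-1/2}$, so $\|\nabla u\|+\|\nabla H\|\leq Ct^{-1/4}$ for $t\geq T_2$. Setting $T_3:=T_2$ we obtain, for all $t\geq T_3$, $\|\mbox{curl}^2u\|\leq C\big(t^{-1/2}+t^{-1/4}+t^{-1/4}\big)\leq Ct^{-1/4}$, which is the assertion.

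The crux of the argument is the vanishing of all gradient terms under the pairing with $\mbox{curl}^2u$: this is precisely what lets $\|\mbox{curl}^2u\|$ be controlled with no use of $\nabla q$, whose decay is only recovered in the next lemma. Making the boundary integral vanish rests on the geometric fact $\mbox{curl}^2u\cdot n|_{\partial\Omega}=0$ under the perfectly tangential condition $\mbox{curl}u\times n|_{\partial\Omega}=0$, and I would flag this as the one step needing care. Finally, the rate is only $t^{-1/4}$ rather than the faster $t^{-1/2}$ of $\|u_t\|$ because the quadratic terms $u\cdot\nabla u$ and $\mbox{curl}H\times H$ inherit the slower $t^{-1/4}$ decay of the first-order quantities $\nabla u$ and $\nabla H$.
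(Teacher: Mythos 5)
Your proof is correct, but it takes a genuinely different route from the paper. The paper works with the vorticity equation $\rho(w_t+u\cdot\nabla w)-\mu\Delta w=K+\mbox{curl}(\mbox{curl}H\times H)$ for $w=\mbox{curl}\,u$: it tests once with $w$ and once with $w_t$, sums, absorbs $C\|\nabla u\|_1^2\|\mbox{curl}\,w\|^2$ for $t$ large using \eqref{tinfty}, feeds in Lemma \ref{lem4-2} to turn $\|\mbox{curl}\,w\|^2$ on the right into $C\|(q_t,u_t,\theta_t,H_t)\|\le Ct^{-1/2}$, and then invokes the differential inequality of Lemma \ref{lem-deacy}(ii) for $g(t)=\|\sqrt{\rho}w\|^2+\mu\|\mbox{curl}\,w\|^2$. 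You instead read $\mu\,\mbox{curl}^2u$ off the momentum equation statically and kill every gradient term by pairing with $\mbox{curl}^2u$, which reduces the lemma to Cauchy--Schwarz plus the decay already in Lemmas \ref{lem4-1}--\ref{lem4-2}; no Gr\"onwall-type argument is needed. The one step that carries all the weight is $\mbox{curl}^2u\cdot n|_{\partial\Omega}=0$, but this is exactly the trace fact the paper itself asserts and uses in Lemma \ref{7} and in \eqref{u4} (it follows, as you say, from $\mbox{curl}\,u\times n|_{\partial\Omega}=0$, since the normal component of a curl depends only on the tangential trace), so you are on safe ground. Your argument is shorter and makes transparent why the rate saturates at $t^{-1/4}$ (it is inherited from $\nabla u$ and $\nabla H$ through the quadratic terms); the paper's parabolic argument yields in addition the decay of $\|\sqrt{\rho}w\|^2$ and a dissipation bound on $\|w_t\|$, which are not needed for the stated conclusion. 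Both proofs deliver exactly the estimate that Lemma \ref{lem4-4} consumes in \eqref{fin-6}, so the rest of the decay chain is unaffected.
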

\begin{proof}
Recalling the vorticity equation:
\begin{equation}\label{curl-equation-u-2}
\rho(w_{t}+ u\cdot\nabla w)-\mu\Delta w=K+\mbox{curl}(\mbox{curl}H\times H),
\end{equation}
where
\begin{equation*}
K=-\nabla q\times (u_t+u\cdot\nabla u)-\rho (w\cdot\nabla)u-\rho w\mbox{div}u.
\end{equation*}
Taking the $L^2$ inner product of  (\ref{curl-equation-u-2}) with $w$, using integration by parts with $w\times n|_{}\partial\Omega=0$, we obtain
\begin{equation}\label{w}
\begin{split}
\frac{1}{2}\frac{d}{dt}\|\sqrt{\rho}w\|^2+\mu\|\mbox{curl}w\|^2=\int_{\Omega}K\cdot w dx+\int_{\Omega}(\mbox{curl}H\times H)\cdot \mbox{curl}w dx.
\end{split}
\end{equation}
It is easy to check
\begin{equation*}
\int_{\Omega}K\cdot w dx\leq C\|(q_t,u_t,H_t,\theta_t)\|
\end{equation*}
and
\begin{equation*}
\begin{split}
\int_{\Omega}(\mbox{curl}H\times H)\cdot \mbox{curl} dx
&\leq \frac{\mu}{2}\|\mbox{curl}w\|^2+C\|\mbox{curl}H\times H\|^2\\[2mm]
&\leq \frac{\mu}{2}\|\mbox{curl}w\|^2+C\|(q_t,u_t,H_t,\theta_t)\|.
\end{split}
\end{equation*}
after utilizing  Sobolev's inequality and Lemma \ref{lem4-2}. Hence, (\ref{w}) becomes
\begin{equation}\label{fin-1}
\begin{split}
\frac{1}{2}\frac{d}{dt}\|\sqrt{\rho}w\|^2+\frac{\mu}{2}\|\mbox{curl}w\|^2\leq C\|(q_t,u_t,H_t,\theta_t)\|.
\end{split}
\end{equation}
On the other hand,  multiplication (\ref{curl-equation-u-2}) by $w_t$ followed by integration over $\Omega$ gives that
\begin{equation}\label{w_t}
\begin{split}
\frac{\mu}{2}\frac{d}{dt}\|\mbox{curl}w\|^2+\|\sqrt{\rho}w_t\|^2=-\int_{\Omega}\rho u\cdot\nabla w \cdot w_{t}dx+\int_{\Omega}K\cdot w_{t}dx+\int_{\Omega}\mbox{curl}(\mbox{curl}H\times H)\cdot w_{t} dx
\end{split}
\end{equation}
Applying   Sobolev's inequality  and Lemma \ref{lem4-2},  we obtain
\begin{equation}
\begin{split}
\int_{\Omega}\rho u\cdot\nabla w \cdot w_{t} dx&\leq C\|u\|_{L^{\infty}}\|\nabla w\| \|\sqrt{\rho}w_{t}\| \\[2mm]
&\leq  \frac{1}{8}\|\sqrt{\rho}w_{t}\| ^2+C\|\nabla u\|^2_{1}\|\nabla w\|^2\\[2mm]
&\leq \frac{1}{8}\|\sqrt{\rho}w_{t}\| ^2+C\|\nabla u\|^2_{1}\|\mbox{curl} w\|^2+C\|\nabla u\|^2_{1}\|w\|^2\\[2mm]
&\leq \frac{1}{8}\|\sqrt{\rho}w_{t}\| ^2+C\|\nabla u\|^2_{1}\|\mbox{curl} w\|^2+C\|(q_t,u_t,H_t,\theta_t)\|,
\end{split}
\end{equation}
\begin{equation}
\begin{split}
\int_{\Omega}K\cdot w_{t} dx&\leq  \frac{1}{8}\|\sqrt{\rho}w_{t}\| ^2 +C\|(q_t,u_t,H_t,\theta_t)\|,
\end{split}
\end{equation}
and
\begin{equation}
\begin{split}
\int_{\Omega}\mbox{curl}(\mbox{curl}H\times H)\cdot w_{t} dx&\leq \frac{1}{8} \|\sqrt{\rho}w_{t}\|^2 +C \|\nabla H\|^2_{1}\|\nabla ^2H\|^2\\[2mm]
&\leq \frac{1}{8} \|\sqrt{\rho}w_{t}\|^2 +C \|(q_t,u_t,H_t,\theta_t)\|,
\end{split}
\end{equation}
then substituting these into (\ref{w_t}), we derive that
\begin{equation}\label{fin-2}
\begin{split}
&\frac{\mu}{2}\frac{d}{dt}\|\mbox{curl}w\|^2+\frac{5}{8}\|\sqrt{\rho}w_t\|^2\\[2mm]
&\leq C\|\nabla u\|^2_{1}\|\mbox{curl} w\|^2+C\|(q_t,u_t,H_t,\theta_t)\|.
\end{split}
\end{equation}
Summing (\ref{fin-1}) and (\ref{fin-2}) up,  one gets
 \begin{equation}
 \begin{split}
		&\frac{d}{dt}\left\{\|\sqrt{\rho}w\|^2+\mu\|\mbox{curl}w\|^2\right\}
+c_{0} \left\{\|\mbox{curl}w\|^2+\|\sqrt{\rho}w_t\|^2\right\}\\
&\leq C\|\nabla u\|^2_{1} \|\mbox{curl}w\|^2
+C\|(q_t,u_t,H_t,\theta_t)\|,
 \end{split}
	\end{equation}
for some positive constant $c_0>0$. From (\ref{tinfty}), there exists $T'_3$  such that if $t\geq T'_3$, then
\begin{equation*}
C\|\nabla u\|^2_{1}\leq\frac{c_0}{2},
\end{equation*}
which yields
 \begin{equation*}
 \frac{d}{dt}\left\{\|\sqrt{\rho}w\|^2+\mu\|\mbox{curl}w\|^2\right\}
+\frac{c_{0}}{2} \left\{\|\mbox{curl}w\|^2+\|\sqrt{\rho}w_t\|^2\right\}
\leq C\|(q_t,u_t,H_t,\theta_t)\|.
	\end{equation*}
So,
 \begin{equation*}
 \frac{d}{dt}\left\{\|\sqrt{\rho}w\|^2+\mu\|\mbox{curl}w\|^2\right\}
+\frac{c_{0}}{2} \|\mbox{curl}w\|^2
\leq C\|(q_t,u_t,H_t,\theta_t)\|.
	\end{equation*}
Putting (\ref{deacy2}) together with the above inequality induce that
\begin{equation*}
 \frac{d}{dt}\left\{\|\sqrt{\rho}w\|^2+\mu\|\mbox{curl}w\|^2\right\}
+\frac{c_{0}}{2} \left\{\|\mbox{curl}w\|^2+\|\sqrt{\rho}w\|^2\right\}
\leq C\|(q_t,u_t,H_t,\theta_t)\|, \quad \forall t>T_3=\max\{T'_3, T_2\}
\end{equation*}
then  from Lemma \ref{lem4-1}, we have
\begin{equation*}
g'(t)+\frac{c_0}{2}g(t)\leq Ct^{-1/2},\,\,\, \forall t>T_3
\end{equation*}
where $g(t)$ is defined as
\begin{equation*}
g(t)\equiv\|\sqrt{\rho}w\|^2+\mu\|\mbox{curl}w\|^2.
\end{equation*}
An application of Lemma \ref{lem-deacy} (ii) implies
\begin{equation*}
g(t)\leq Ct^{-1/2} \quad \mbox{for} \quad t\geq T_3.
\end{equation*}
Therefore, this completes the proof of Lemma \ref{lem4-3}.
\end{proof}

\begin{lem}\label{lem4-4}
For every solution $(q,u,\theta,H)\in X(0, \infty; \mathcal{E}_0)$ of the problem (\ref{prob-linear})-(\ref{boundary-linear}), it exists a time $T_4\geq T_3$, such that
\begin{equation}\label{deacy4}
\|\nabla^2 u\|+\|\nabla q\|\leq C t^{-1/4} \quad \forall t\geq T_4.
\end{equation}
\end{lem}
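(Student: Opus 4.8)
The plan is to reduce the lemma to a single scalar differential inequality for $g(t)=\|\nabla q\|^2$ of the form treated in Lemma \ref{lem-deacy}(ii), and to recover $\|\nabla^2u\|$ from $\|\nabla q\|$ through an elliptic bound read off the momentum equation. The inputs are the decay already in hand: combining Lemma \ref{lem4-1} with Lemma \ref{lem4-2} gives, for $t\ge T_2$,
\begin{equation*}
\|(q_t,u_t,\theta_t,H_t)\|\leq Ct^{-1/2},\qquad \|(\nabla u,\nabla\theta,\nabla H)\|\leq Ct^{-1/4},\qquad \|\mbox{curl}H\|\leq C\|\nabla H\|\leq Ct^{-1/4},
\end{equation*}
while Lemma \ref{lem4-3} supplies $\|\mbox{curl}^2u\|\leq Ct^{-1/4}$ for $t\ge T_3$. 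The only quantities lacking decay are exactly $\|\nabla q\|$ and $\|\nabla^2u\|$, and these are coupled.

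First I would record the elliptic consequence of $(\ref{prob-linear})_2$. Using $\Delta u=\nabla\mbox{div}u-\mbox{curl}^2u$ and isolating $\nabla\mbox{div}u$ gives
\begin{equation*}
(2\mu+\lambda)\nabla\mbox{div}u=R\nabla q+\rho(u_t+u\cdot\nabla u)+\mu\mbox{curl}^2u+R\nabla\theta+R\nabla(q\theta)-\mbox{curl}H\times H.
\end{equation*}
Since $\|q\|_{L^\infty}+\|\theta\|_{L^\infty}+\|u\|_{L^\infty}+\|H\|_{L^\infty}\leq C\delta$, the right-hand side is bounded by $C\|\nabla q\|+C(\|u_t\|+\|\mbox{curl}^2u\|+\|\nabla u\|+\|\nabla\theta\|+\|\mbox{curl}H\|)$, so $\|\nabla\mbox{div}u\|\leq C\|\nabla q\|+Ct^{-1/4}$. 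Together with $\|\Delta u\|\leq\|\nabla\mbox{div}u\|+\|\mbox{curl}^2u\|$ and Lemma \ref{lem-elliptic-1} applied to $u$, this yields the algebraic relation $\|\nabla^2u\|\leq C(\|\Delta u\|+\|\nabla u\|)\leq C\|\nabla q\|+Ct^{-1/4}$, reducing the lemma to proving $\|\nabla q\|\leq Ct^{-1/4}$.

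Next I would test $(\ref{prob-linear})_2$ against $\nabla q$ in $L^2(\Omega)$. The $\nabla\mbox{div}u$ contribution is the cross term $(2\mu+\lambda)\int_\Omega\nabla\mbox{div}u\cdot\nabla q\,dx$, which I convert using the continuity equation $(\ref{prob-linear})_1$ written as $\mbox{div}u=-q_t-\mbox{div}(qu)$; this produces an exact time derivative $-\tfrac{2\mu+\lambda}{2}\tfrac{d}{dt}\|\nabla q\|^2$ plus a nonlinear remainder, giving
\begin{equation*}
\begin{split}
\frac{2\mu+\lambda}{2}\frac{d}{dt}\|\nabla q\|^2+R\|\nabla q\|^2=&\int_\Omega\Big(-\rho(u_t+u\cdot\nabla u)-\mu\mbox{curl}^2u-R\nabla\theta-R\nabla(q\theta)+\mbox{curl}H\times H\Big)\cdot\nabla q\,dx\\
&-(2\mu+\lambda)\int_\Omega\nabla\mbox{div}(qu)\cdot\nabla q\,dx.
\end{split}
\end{equation*}
Each linear term is bounded by $\varepsilon\|\nabla q\|^2$ plus a constant times one of $\|u_t\|^2,\|\mbox{curl}^2u\|^2,\|\nabla\theta\|^2,\|\mbox{curl}H\|^2$, all $O(t^{-1/2})$. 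The nonlinear pieces (the $\nabla(q\theta)$ term and the $\nabla\mbox{div}(qu)$ remainder) carry a factor $\|q\|_{L^\infty}+\|\theta\|_{L^\infty}+\|u\|_{W^{1,\infty}}\leq C\delta$; the only second-order contribution, $\int_\Omega q\,\nabla\mbox{div}u\cdot\nabla q\,dx$, is estimated by $\varepsilon\|\nabla q\|^2+C\delta^2\|\nabla\mbox{div}u\|^2$, while the top-derivative term reduces by integration by parts to $\int_\Omega u\cdot\nabla(\nabla q)\cdot\nabla q\,dx=-\tfrac12\int_\Omega\mbox{div}u\,|\nabla q|^2\,dx$ thanks to $u\cdot n|_{\partial\Omega}=0$. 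Inserting the elliptic bound $\|\nabla\mbox{div}u\|^2\leq C\|\nabla q\|^2+Ct^{-1/2}$ from the previous step rewrites $C\delta^2\|\nabla\mbox{div}u\|^2$ as $C\delta^2\|\nabla q\|^2+C\delta^2t^{-1/2}$, absorbable for $\delta$ small.

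Choosing $\varepsilon$ and $\delta$ small so that the coefficient of $\|\nabla q\|^2$ on the left stays $\geq R/2$, I obtain $g'(t)+c_0g(t)\leq Ct^{-1/2}$ with $g=\|\nabla q\|^2$ for all $t\geq T_4:=T_3$; Lemma \ref{lem-deacy}(ii) with $\alpha=1/2$ then gives $\|\nabla q\|^2\leq Ct^{-1/2}$, i.e.\ $\|\nabla q\|\leq Ct^{-1/4}$, and feeding this into the elliptic relation yields $\|\nabla^2u\|\leq Ct^{-1/4}$, as claimed. I expect the main obstacle to be precisely the $\nabla q$--$\nabla\mbox{div}u$ coupling: neither factor decays on its own, and only the special combination generated by pairing the momentum equation with $\nabla q$ and substituting the continuity equation produces simultaneously the dissipative term $+R\|\nabla q\|^2$ and an exact time derivative. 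The secondary difficulty is ensuring that the genuinely second-order nonlinear terms in $\nabla\mbox{div}u$ and $\nabla^2q$ are absorbed rather than left over, which is the reason the a priori elliptic bound $\|\nabla^2u\|\lesssim\|\nabla q\|+t^{-1/4}$ must be proved before the differential inequality is closed.
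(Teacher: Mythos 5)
Your argument is correct, but it takes a genuinely different route from the paper's. The paper treats $(q,u)$ as a Stokes system and invokes Lemma \ref{lem-elliptic-2} to get $\|\nabla^2u\|^2+\|\nabla q\|^2\lesssim\|\nabla\mbox{div}u\|^2+{}$decaying terms, converts $\nabla\mbox{div}u$ into $\nabla q_t$ through the continuity equation, and then---this is the heart of its proof---handles $\|\nabla q_t\|^2$ by Deckelnick's boundary localization: the tangential part $\int e_{ik}q_{t,x_i}q_{t,x_k}$ is integrated by parts down to $\|q_t\|$ using $e_{ik}n_k|_{\partial\Omega}=0$, while the normal part $\psi(d)\nabla q_t\cdot\nabla d$ is inserted into a pointwise identity obtained by combining the normal components of the momentum and continuity equations, and Lemma \ref{lem-deacy}(ii) is applied only to $\|\psi(d)\nabla q\cdot\nabla d\|^2$. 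You instead run a global Matsumura--Nishida type estimate: pairing $(\ref{prob-linear})_2$ with $\nabla q$ and substituting $\mbox{div}u=-q_t-\mbox{div}(qu)$ produces simultaneously the exact derivative $\tfrac{2\mu+\lambda}{2}\tfrac{d}{dt}\|\nabla q\|^2$ and the dissipation $R\|\nabla q\|^2$, so Lemma \ref{lem-deacy}(ii) applies directly to $\|\nabla q\|^2$ with no cutoff. The steps check out: the only integration by parts, $\int_\Omega u\cdot\nabla(\nabla q)\cdot\nabla q\,dx=-\tfrac12\int_\Omega\mbox{div}u\,|\nabla q|^2dx$, is boundary-term free by $u\cdot n|_{\partial\Omega}=0$; the $\mbox{curl}^2u$ term is $O(t^{-1/2})$ by Lemma \ref{lem4-3}, which is exactly the ingredient the paper also singles out as the new point relative to the Dirichlet case; and the a priori relation $\|\nabla\mbox{div}u\|\leq C\|\nabla q\|+Ct^{-1/4}$, read off the momentum equation, lets you absorb $C_\varepsilon\delta^2\|\nabla\mbox{div}u\|^2$ provided you fix $\varepsilon$ first and then take $\delta$ small (worth stating explicitly, since $C_\varepsilon$ grows as $\varepsilon\to0$). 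What each approach buys: yours is shorter and dispenses with the cutoff machinery entirely, exploiting that the slip condition makes both the $\mbox{curl}^2u$ contribution and the transport term harmless globally; the paper's localization is the more robust device, surviving in settings where a global pairing with $\nabla q$ would generate boundary terms or where $\mbox{curl}^2u$ cannot be controlled separately. Under the hypotheses of this paper both yield $\|\nabla q\|^2\leq Ct^{-1/2}$, and the elliptic bound $\|\nabla^2u\|\leq C(\|\Delta u\|+\|\nabla u\|)$ from Lemma \ref{lem-elliptic-1} transfers the decay to $\|\nabla^2u\|$ as you claim.
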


\begin{proof} Consider $(q, u)$ as the solution of the following Stokes problem:
\begin{equation}\label{Stoke}
\left\{
\begin{array}{llll}
	-\mu \Delta u+ R\nabla q=G \quad \mbox{in}\quad \Omega\\[2mm]
	\mbox{div}u=\mbox{div}u \quad \mbox{in}\quad \Omega\\[2mm]
	u\cdot n|_{\partial\Omega}=0,\quad \mbox{curl}u\times n|_{\partial\Omega}=0
\end{array}
\right.
\end{equation}
where
\begin{equation*}
	\begin{split}
G&=(\mu+\lambda)\nabla\mbox{div}u-\rho\big(u_{t}+u\cdot\nabla u\big)-R\nabla\theta-R\nabla(q\theta)+H\cdot\nabla H-\frac{1}{2}\nabla|H|^2\\[2mm]
&\equiv(\mu+\lambda)\nabla\mbox{div}u+G_{1}
\end{split}
\end{equation*}
In view of H$\ddot{o}$lder's inequality, Sobolev's inequality and Lemma \ref{lem4-2}, it is easy to see that
\begin{equation}\label{G_1}
\|G_1\|^2\leq C\|(q_t,u_t,\theta_t,H_t)\|+\|\theta\|_{L^{\infty}}^2\|\nabla q\|^2,
\end{equation}
which implies that
\begin{equation*}
\|G\|^2
\leq C\|\nabla\mbox{div}u\|^2+C\|(q_t,u_t,\theta_t,H_t)\|+\|\theta\|_{L^{\infty}}^2\|\nabla q\|^2.
\end{equation*}
Then elliptic estimate (see Lemma \ref{lem-elliptic-2}) for the Stoke problem yields
\begin{equation*}
\begin{split}
\|\nabla^2u\|^2+\|\nabla q\|^2&\leq C\left(\|G\|^2+\|\mbox{div}u\|^2_{1}+\|\nabla u\|^2\right)\\[2mm]
&\leq C\|\nabla\mbox{div}u\|^2+C\|(q_t,u_t,\theta_t,H_t)\|+\|\theta\|_{L^{\infty}}^2\|\nabla q\|^2.
\end{split}
\end{equation*}
By (\ref{tinfty}) and Lemma \ref{sobolev inequ}, there exists a time $T_{4}>T_{3}$ such that
\begin{equation}\label{small}
\|\theta\|_{L^{\infty}}^2 \leq C\|\nabla \theta\|^2_1\leq \frac{1}{4}.
\end{equation}
Therefore, we obtain the following inequality
\begin{equation}\label{u2}
\|\nabla^2u\|^2+\frac{3}{4}\|\nabla q\|^2
\leq C\|\nabla\mbox{div}u\|^2+C\|(q_t,u_t,\theta_t,H_t)\|
\end{equation}
 for $t\geq T_4$. Next, we shall estimate the term $\|\nabla\mbox{div}u\|$. For this purpose, we take the derivative of the continuity equation with respect to $x_{i}$, $1\leq i\leq 3$ as follows
\begin{equation}\label{divu}
\begin{split}
(\mbox{div}u)_{x_{i}}&=-\frac{1}{\rho}\left( q_{t,x_i}+ q_{x_i}\mbox{div}u+u\cdot\nabla q_{x_{i}}+u_{x_{i}}\cdot\nabla q\right)\\[2mm]
&\equiv-\frac{1}{\rho} q_{t,x_i}-\frac{1}{\rho}F^{i}.
\end{split}
\end{equation}
It is clear that
\begin{equation}\label{F}
\|F\|^2\leq C\|\nabla q\|^2_{2}\|\nabla u\|^2\leq C\|(q_t,u_t,H_t,\theta_t)\|,
\end{equation}
which together with (\ref{divu}) implies that
\begin{equation}\label{divu2}
\|\nabla\mbox{div}u\|^2\leq C\int_{\Omega}|\nabla q_{t}|^2 dx +C\|(q_t,u_t,H_t,\theta_t)\|.
\end{equation}
In order to deal with $\|\nabla q_{t}\|^2$, we follow the method given in \cite{1992MZ}.  Let $d(x):=dist(x,\partial\Omega)$ satisfies
 $$d\in C^2(\bar{\Gamma}_h), \Gamma_h=\{x\in\Omega|0\leq d(x)<h\}$$
and $\psi\in C^{\infty}(\mathbb{R}^{+})$ be non-increasing
with $\psi(d)=1$ for $0\leq d\leq \frac{1}{2}h$, and $\psi(d)=0$ for $d\geq\frac{3}{4}h$. For $1\leq i,k\leq 3$, we define
\begin{equation*}
e_{ik}(x):=\delta_{ik}-\psi(d(x))^2d_{x_i}(x)d_{x_{k}}(x).
\end{equation*}
For any smooth function $f:\Omega\rightarrow \mathbb{R}$, one has
\begin{equation}\label{f-1}
\begin{split}
&f_{x_i}=e_{ik}f_{x_{k}}+\psi(d)^2d_{x_i}d_{x_k}f_{x_{k}}\quad 1\leq i\leq 3,\\[1mm]
&|\nabla f|^2=e_{ik}f_{x_i}f_{x_k}+\psi(d)^2(\nabla f\cdot\nabla d)^2.
\end{split}
\end{equation}
Since $\psi(0)=1$ and $\nabla d(x)=n(x)$ for any $x\in\partial\Omega$, where $n$ denotes the outer unit normal, we have
\begin{equation}\label{f-2}
\begin{split}
&e_{ik}n_{k}|_{\partial\Omega}=0,\quad\quad 1\leq i\leq3,\\[1mm]
&e_{ik}f_{x_k}n_{i}|_{\partial\Omega}=0.
\end{split}
\end{equation}
Therefore, we have the following equivalent form
\begin{equation}\label{f-3}
\int_{\Omega}|\nabla q_{t}|^2 dx=\int_{\Omega}e_{ik} q_{t,x_i}q_{t,x_{k}} dx+\int_{\Omega}\psi(d)^2(\nabla q_{t}\cdot \nabla d)^2 dx,
\end{equation}
The first term on the right-hand side of (\ref{f-3}) could be controlled as
\begin{equation}\label{qt}
\begin{split}
\int_{\Omega}e_{ik} q_{t,x_i}q_{t,x_{k}} dx&=-\int_{\Omega}(e_{ik})_{x_i}q_{t}q_{t,x_k} dx
-\int_{\Omega}e_{ik}q_{t}q_{t,x_{i}x_{k}} dx+\int_{\partial\Omega}e_{ik}n_{i}q_{t}q_{t,x_{k}} dx\\[2mm]
&=-\int_{\Omega}(e_{ik})_{x_i}q_{t}q_{t,x_k} dx
-\int_{\Omega}e_{ik}q_{t}q_{t,x_{i}x_{k}} dx\\[2mm]
&\leq C\|q_t\|\|\nabla q_{t}\|_{1}\leq C\|q_t\|.
\end{split}
\end{equation}
by integrating  by parts with the  boundary condition (\ref{f-2}).
Therefore, combining (\ref{u2}), (\ref{divu2}) with (\ref{f-3})-(\ref{qt})  gives that
\begin{equation}\label{4-0}
\|\nabla^2u\|^2+\|\nabla q\|^2
\leq C\|\psi(d)(\nabla q_{t}\cdot \nabla d)\|^2+C\|(q_t,u_t,\theta_t,H_t)\|,\quad \forall t\geq T_{3}.
\end{equation}
Multiply (\ref{divu}) by $(2\mu+\lambda)\psi(d)d_{x_i}$, and sum up all these equations for $i=1,2,3$, then we get
\begin{equation}\label{4-1}
\frac{2\mu+\lambda}{\rho}\psi(d)\nabla q_{t}\cdot\nabla d+(2\mu+\lambda)\psi(d)\nabla\mbox{div}u\cdot \nabla d
=\frac{2\mu+\lambda}{\rho}\psi(d)F\cdot \nabla d
\end{equation}
In order to cancel the second term on the left-hand side on (\ref{4-1}), we rewrite $(\ref{Stoke})_{1}$ as
\begin{equation*}
-(2\mu+\lambda)\nabla\mbox{div}u+R\nabla q=-\mu \mbox{curl curl}u+G_1
\end{equation*}
then multiplying this identity by $\psi(d)\nabla d$, and adding the result to (\ref{4-1}), we infer that
\begin{equation}\label{Q-equation}
\begin{split}
&\frac{2\mu+\lambda}{\rho}\psi(d)\nabla q_{t}\cdot\nabla d+R\psi(d)\nabla q\cdot\nabla d\\[2mm]
&=\frac{2\mu+\lambda}{\rho}\psi(d)F\cdot \nabla d-\mu\psi(d) \mbox{curl curl}u\cdot\nabla d+\psi(d)G_1 \cdot\nabla d\equiv Q.
\end{split}
\end{equation}
Moreover, (\ref{F}), (\ref{G_1}), Lemma \ref{lem4-1} and Lemma \ref{lem4-3} imply that
\begin{equation}\label{fin-6}
\begin{split}
\|Q\|^2&\leq C\|\mbox{curl}\mbox{curl}u\|^2+\|F\|^2+\|G_1\|^2\\[2mm]
&\leq C\|\theta\|_{L^{\infty}}^2\|\nabla q\|^2+Ct^{-1/2},
\end{split}
\end{equation}
where we have used the definition of $\psi(d)$ and $|\nabla n|=1$. Therefore, (\ref{small}) and (\ref{4-0}) induce
\begin{equation}\label{fin-5}
\|\nabla^2u\|^2+\frac{1}{2}\|\nabla q\|^2
\leq C\|\psi(d)\nabla q\cdot\nabla d\|^2+Ct^{-1/2}, \quad
\mbox{for}\quad t\geq \widetilde{T_{4}}.
\end{equation}

On the other side,  multiplying  (\ref{Q-equation}) by $\psi(d)\nabla q\cdot\nabla d$, it indicates
\begin{equation*}
\begin{split}
&\frac{d}{dt}\int_{\Omega}\frac{2\mu+\lambda}{\rho}(\psi(d)\nabla q\cdot\nabla d)^2 dx+R\|\psi(d)\nabla q\cdot \nabla d\|^2\\[2mm]
&=\int_{\Omega}(\psi(d)\nabla q\cdot\nabla d)Q dx+\int_{\Omega}(\frac{2\mu+\lambda}{\rho})_{t}\psi(d)\nabla q\cdot\nabla d dx\\[2mm]
&\leq \varepsilon \|\psi(d)\nabla q\cdot \nabla d\|^2+C_\varepsilon (\|Q\|^2+\|q_t\|^2).
\end{split}
\end{equation*}
Therefore, from Lemma \ref{lem4-1}, (\ref{tinfty}), (\ref{fin-6}) and (\ref{fin-5}),  we conclude that
\begin{equation*}
\begin{split}
\frac{d}{dt}\int_{\Omega}\frac{2\mu+\lambda}{\rho}(\psi(d)\nabla q\cdot\nabla d)^2 dx+\frac{R}{2}\|\psi(d)\nabla q\cdot \nabla d\|^2
\leq  Ct^{-1/2}, \quad \mbox{for}\quad t\geq T_{4}\geq \widetilde{T_4}.
\end{split}
\end{equation*}
 One can apply (ii) of Lemma \ref{lem-deacy} to gain
\begin{equation*}
\int_{\Omega}\frac{2\mu+\lambda}{\rho}(\psi(d)\nabla q\cdot\nabla d)^2 dx\leq Ct^{-1/2},
\end{equation*}
which together with (\ref{fin-5}) finishes the proof of Lemma \ref{lem4-4}.
\end{proof}

Now, we are ready to prove  Theorem \ref{thm-2}.

\noindent{\bf {Proof of  Theorem \ref{thm-2} }}
Lemma \ref{lem4-2} and Lemma \ref{lem4-4} yield
\begin{equation*}
\|(u,\theta,H)\|_{C^{0}(\bar{\Omega})}\leq C\|(\nabla u, \nabla \theta, \mbox{curl}H)\|_{1}\leq  Ct^{-1/4}.
\end{equation*}
Because
\begin{equation*}
\|q\|^4_{L^{4}}\leq \|q\| \|q\|^3_{L^6}\leq C\|\nabla q\|^3\leq Ct^{-3/4},
\end{equation*}
and
\begin{equation*}
\|\nabla q\|^4_{L^{4}}\leq \|\nabla q\|^2_{L^{\infty}} \|\nabla q\|^2\leq Ct^{-1/2}.
\end{equation*}
Then we have
\begin{equation*}
\|\rho-1\|_{C^{0}(\bar{\Omega})}=\|q\|_{C^{0}(\bar{\Omega})}\leq C\|q\|_{H^{1,4}}\leq C\left(t^{-3/16}+t^{-1/8}\right)\leq C t^{-1/8}.
\end{equation*}
 Hence Theorem \ref{thm-2} is proved.
\qed

\centerline{\bf Acknowledgements}
Liu's research is supported by National Natural Science Foundation of China (No.12071219, 12026432). Luo's research is supported by a grant from the Research Grants Council of the Hong Kong Special Administrative Region, China (Project No. 11307420). Zhong is supported by the Fundamental Research Funds for the Central Universities (No. A0920502052101-224). Luo would like to thank Professor Chongchun Zeng for helpful discussions on the Hodge type estimates.

\end{document}